\newcommand{\mathitem}[2]{{\item #1 \leavevmode\vspace*{-\dimexpr\baselineskip+\abovedisplayskip+#2\relax}}}
\newtheorem{theorem}{Theorem}
\newtheorem{corollary}[theorem]{Corollary}
\newtheorem{proposition}[theorem]{Proposition}
\newtheorem{lemma}[theorem]{Lemma}
\newtheorem{remark}[theorem]{Remark}
\newtheorem{assumption}{Assumption}
\renewcommand{\theassumption}{A\the\value{assumption}}
\newcommand{\E}{\mathbb{E}}
\newcommand{\Var}{\mathbb{V}\mathrm{ar}}
\newcommand{\Corr}{\mathrm{Corr}}
\newcommand{\mb}{\mathbf}
\newcommand{\bs}{\boldsymbol}
\newcommand{\sign}{\operatorname{sign}}
\newcommand{\liminftyg}{{
\lim_{b_1, b_2 \to +\infty}}}
\newcommand{\liminftyd}{{
\lim_{N \to +\infty}}}
\newcommand{\liminftygd}{{
\lim_{b_1, b_2, N \to +\infty}}}
\newcommand{\equalinlaw}{{\overset{\textnormal{law}}{=}}}
\def\Pb{{\mathbb P}}
\def\Rb{{\mathbb R}}
\def\Gc{{\mathcal G}}
\def\Kc{{\mathcal K}}
\def\T{{\mb{T}}}
\def\W{{\mb{W}}}
\def\X{{\mb{X}}}
\def\Z{{\mb{Z}}}
\def\w{{\mb{w}}}
\def\x{{\mb{x}}}
\def\z{{\mb{z}}}
\def\one{{\bf 1} }
\def\1{{\bf 1} }
\def\Matone{{ \mathbf{1} } }
\title{Fast estimation of Kendall's Tau and conditional Kendall's Tau matrices under structural assumptions}
\author{Rutger van der Spek\thanks{Department of Applied Mathematics, Delft University of Technology, Mekelweg 4, 2628 CD, Delft, Netherlands.}
\, and Alexis Derumigny\footnotemark[1]\hspace{0.3em}\footnote{Corresponding author, email: a.f.f.derumigny@tudelft.nl.}}
\date{\today}
\begin{document}

\maketitle


    
    
    
    
    
    \begin{abstract}
        Kendall's tau and conditional Kendall's tau matrices are multivariate (conditional) dependence measures between the components of a random vector. For large dimensions, available estimators are computationally expensive and can be improved by averaging.
        Under structural assumptions on the underlying Kendall's tau and conditional Kendall's tau matrices, we introduce new estimators that have a significantly reduced computational cost while keeping a similar error level.
        In the unconditional setting we assume that, up to reordering, the underlying Kendall's tau matrix is block-structured with constant values in each of the off-diagonal blocks. Consequences on the underlying correlation matrix are then discussed.
        The estimators take advantage of this block structure by averaging over (part of) the pairwise estimates in each of the off-diagonal blocks.
        Derived explicit variance expressions show their improved efficiency.
        In the conditional setting, the conditional Kendall's tau matrix is assumed to have a block structure, for some value of the conditioning variable.
        Conditional Kendall's tau matrix estimators are constructed similarly as in the unconditional case by averaging over (part of) the pairwise conditional Kendall's tau estimators.
        We establish their joint asymptotic normality, and show that the asymptotic variance is reduced compared to the naive estimators.
        Then, we perform a simulation study which displays the improved performance of both the unconditional and conditional estimators. 
        Finally, the estimators are used for estimating the value at risk of a large stock portfolio; backtesting illustrates the obtained improvements compared to the previous estimators.
    
    \bigskip
    \noindent
    \textbf{Keywords:} Kendall's tau matrix, block structure, kernel smoothing, conditional dependence measure.
    
    \noindent
    \textbf{MSC:} Primary: 62H20; secondary: 62F30, 62G05.
\end{abstract}


\section{Introduction}

In dependence modeling, the main object of interest is the copula, which is a cumulative distribution function on $[0,1]^p$ with uniform margins, describing the links between elements of a $p$-dimensional random vector $\X$. However, the copula belongs to an infinite-dimensional space, and it is not easy to represent it as soon as $p$ is larger than $3$ or $4$. In such cases, finite-dimensional statistics becomes more useful to understand the dependence, the most well-known of them being Kendall's tau matrix.

\medskip

Kendall's tau between two random variables $X_i$ and $X_j$, denoted by $\tau_{i,j} = \tau(\Pb_{i,j})$, is defined as the probability of concordance between two independent replications from the distribution $\Pb_{i,j}$ of $(X_i, X_j)$ minus the probability of discordance.
The equality $\tau(\Pb_{i,j}) = 4\int C_{i,j} dC_{i,j} - 1$ relates Kendall's tau with the copula $C_{i,j}$ of $X_i$ and $X_j$; we refer to \cite{nelsen2007introduction} for an extensive introduction to Kendall's tau and copulas.

\medskip

When a covariate $\Z \in \Rb^d$ is available, we can extend the definition of Kendall's tau to the conditional setting.
Conditional Kendall's tau is then defined as
$\tau_{i,j|\Z=\z} := \tau(\Pb_{(i,j)|\Z=\z})$, where $\Pb_{(i,j)|\Z=\z}$ denotes the conditional law of $(X_i, X_j)$ given $\Z=\z$, for some $\z \in \Rb^d$.
In \cite{derumigny2019kernel,CKTVeraverbeke1,CKTVeraverbeke2}, smoothing-based estimators of conditional Kendall's tau are studied.
In \cite{derumigny2018classification}, it is shown that the estimation of conditional Kendall's tau can be written as a classification task; they proposed to use classification algorithms to estimate conditional Kendall's tau. In \cite{kendallregr}, a regression-type model is used to estimate conditional Kendall's tau in a parametric conditional framework. \cite{Ascorbebeitia2022testing} uses conditional Kendall's tau for hypothesis testing.

\medskip

For a random vector $\X$, we define Kendall's tau matrix by $\T := [\tau_{i,j}]_{1 \leq i,j \leq p}$, which contains all pairwise Kendall's taus; respectively, in the conditional framework, a natural counterpart is conditional Kendall's tau matrix, denoted by $\T_{|\Z=\z} := [\tau_{i,j|\Z=\z}]_{1 \leq i,j \leq p}$. Kendall's tau matrix is especially useful for elliptical graphical models and their generalizations, see \cite{barber2018rocket,liu2012transelliptical}. In \cite{lu2018post}, a time-varying graphical model is studied using an estimate of conditional Kendall's tau matrix. Kendall's tau matrix plays an important role since it allows robust estimation of the dependence, and can be used to fit an appropriate copula~\cite{VarKT}. In an elliptical distribution framework, it can also be used to estimate the Value at Risk of a portfolio, see~\cite{pimenova2012semi,VaRell}.

\medskip

Estimation of the $p \times p$ Kendall's tau matrix $\T$ becomes particularly challenging in the high-dimensional setting when $p$ is large. Simple use of the naive Kendall's tau matrix estimator of all pairwise sample Kendall's taus will result in noisy estimates with estimation errors piling up due to the estimates' individual imprecision \cite{factormodel}. Over the past two decades, various regularization strategies have been proposed to reduce the aggregation of estimation errors. Ultimately, these methods all make certain assumptions on the underlying dependence structure, hereby reducing the number of free parameters to estimate.

\medskip

In many instances, sparsity of the target matrix is assumed. For such settings, various (combinations of) thresholding and shrinkage methods have been proposed, see for example~\cite{thresholding,gray2018shrinkage,generalisedthresholding}.
However, such assumptions are certainly not appropriate for the modeling of most financial data, e.g. market risk is reflected in all share prices and therefore their returns are certainly correlated.  To this end, factor models are usually imposed, where the correlations depend on a number of common factors, which may or may not be latent, see \cite{factormodel,latentfactormodel}.

\medskip

In \cite{perreault2020structures,perreault}, an alternative approach to estimating large Kendall's tau matrices was introduced. 
They studied a model in which it is assumed that the set of variables could be partitioned into smaller clusters with exchangeable dependence. As such, after reordering of the variables by cluster, the corresponding Kendall's tau matrix is block-structured with constant values within each block. Following naturally is an improved estimation by averaging all pairwise sample Kendall's taus within each of the blocks.
Additionally, they have proposed a robust algorithm identifying such structures (see also~\cite{perreault2020hypothesis} for testing for the presence of such a structure).

\medskip

In this article, we study a similar framework as in \cite{perreault}, where we relax the partial exchangeability assumption: we only assume that off-diagonal blocks of Kendall's tau matrix are constant. One of the drawbacks of the estimator studied in \cite{perreault} is its computational cost, which is close to the one of the naive Kendall's tau matrix estimator: the number of pairwise sample Kendall's taus that are to be computed scales quadratically with the dimension~$p$.

\medskip

Naturally, the idea of averaging among several Kendall's taus can be applied to part of the blocks, which allows for faster computations.
As such, we propose several estimators that average among part of the Kendall's tau per off-diagonal block and study their efficiencies and computational costs.
For every off-diagonal block, we will consider averaging over elements in the same row, averaging over elements on the diagonal and averaging over a number of randomly selected elements.
We will be referring to these estimators as the \textit{row}, \textit{diagonal} and \textit{random} estimators; the estimator that averages over all elements is referred to as the \textit{block} estimator.

\medskip

We then extend this model to the conditional setup: conditional Kendall's taus are depending on $\z$ and are assumed to be clustered such that for all $\z \in \Rb^d$, the Kendall's tau conditionally on $\Z = \z$ between variables of different groups is only depending on group numbers and on the value of $\z$.
In view of applications to finance, the conditional version of our structural assumption could, for example, be seen as assuming that the correlations between European stocks of two different groups are equal and react equally to changes of some other American stock or portfolio.
Furthermore, in \cite{DependentCorr3,DependentCorr1,DependentCorr2}, it was shown that stock returns actually exhibit higher correlations during market declines than during market upturns, and moreover that the same applies to exchange rates in \cite{intrCondCopulas1}. In such a model, it is also important to limit computation times and study improved estimators that can take advantage of the block structure of the Kendall's tau matrix.

\medskip

In this framework, we adopt nonparametric estimates of the conditional Kendall's tau based on kernel smoothing. Based on these nonparametric estimates we introduce conditional versions of the averaging estimators and study their asymptotic behavior as the sample size $n$ tends to infinity.
It is worth noting that conditional estimates of Kendall's tau using kernel smoothing carry significantly more computational cost than their unconditional counterparts, especially when the covariate's dimension $d$ is large. Therefore, faster computations of conditional Kendall's tau matrices will be of particular use in the conditional, nonparametric setup. 

\medskip

The rest of this article is structured as follows. 
In Section~\ref{sec:KT}, we present the unconditional framework, and detail a few consequences on the correlation matrix. Then we construct the different estimators in this framework, and derive variance expressions.
Similarly, Section~\ref{sec:CKT} is devoted to the improved estimation of the conditional Kendall's tau matrix, where we propose averaged conditional estimators and we derive the estimators' joint asymptotic normality.
In Section~\ref{sec:SS} we perform a simulation study in order to support the theoretical findings.
Finally, in Section~\ref{sec:RD}, we examine a possible application to study the behavior of the estimators in real data conditions.
The estimators are used for the robust inference of the covariance matrix to estimate the value at risk of a large stock portfolio.
Proofs are postponed to the Appendix.

\medskip

\noindent
{\it Notations.} We denote by $\Matone$ be the vector and the matrix with all entries equal to $1$, where the dimensions can be inferred from the context. For a matrix $M$ of size $p \times p$, and a set of indices $J \subset \{1, \dots, p\}^2$, we denote by $[M]_J$ the submatrix $(M_j)_{j \in J}$.

\section{Fast estimation of Kendall's tau matrix}
\label{sec:KT}

\subsection{The Structural Assumption}
\label{sec:SA}

Let $n \geq 2$ and assume that we observe $n$ i.i.d. replications $\X_i = (X_{i,1}, \ldots, X_{i,p})$ of a random vector $\X=(X_1, \dots, X_p) \in \Rb^p$, for $i=1, \dots, n$. Moreover, assume that the Kendall's tau matrix $\T := [\tau_{i,j}]_{1 \leq i,j \leq p}$ of $\X$ satisfies the following structural assumption.

\begin{assumption}[Structural Assumption]
    \label{as:structural} 
    There exists $K>0$, a partition $\Gc = \{\Gc_1, \dots, \Gc_K\}$ of $\{ 1, \dots, p\}$, a set $J \subset \{1, \dots, K\}^2$ and some constants $(\tau_{k_1,k_2})_{(k_1, k_2) \in J} \in [-1, 1]^J$ such that for all $(k_1, k_2) \in J$,
    \begin{align*}
        \left[\T\right]_{\Gc_{k_1} \times \Gc_{k_2}}
        = \tau_{k_1, k_2} \cdot \one.
    \end{align*}
\end{assumption}

\begin{figure}[bt]
  \centering
  \subfloat[unclustered]{\includegraphics[width=0.49\textwidth]{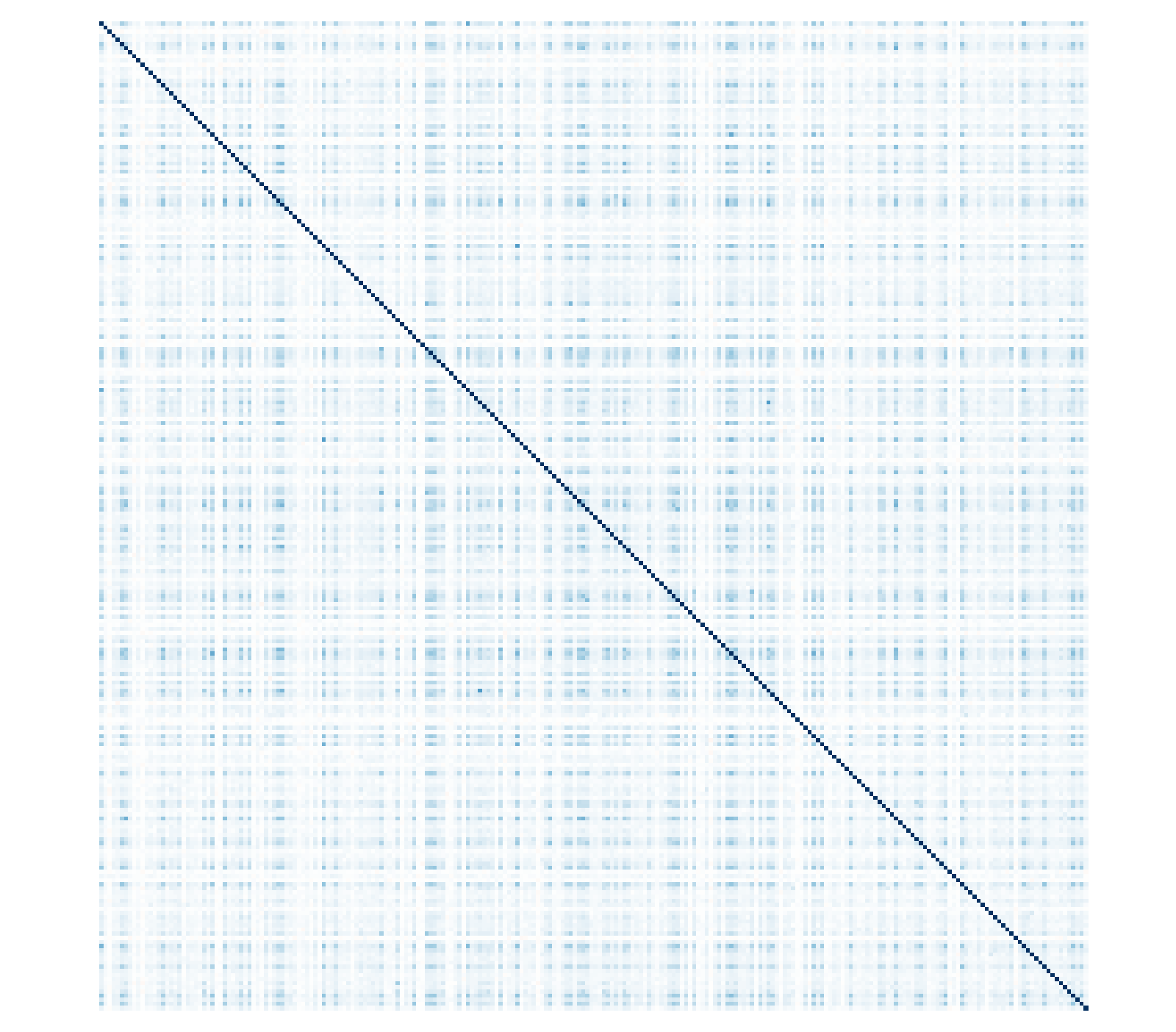}\label{fig:f1}}
  \hfill
  \subfloat[clustered]{\includegraphics[width=0.49\textwidth]{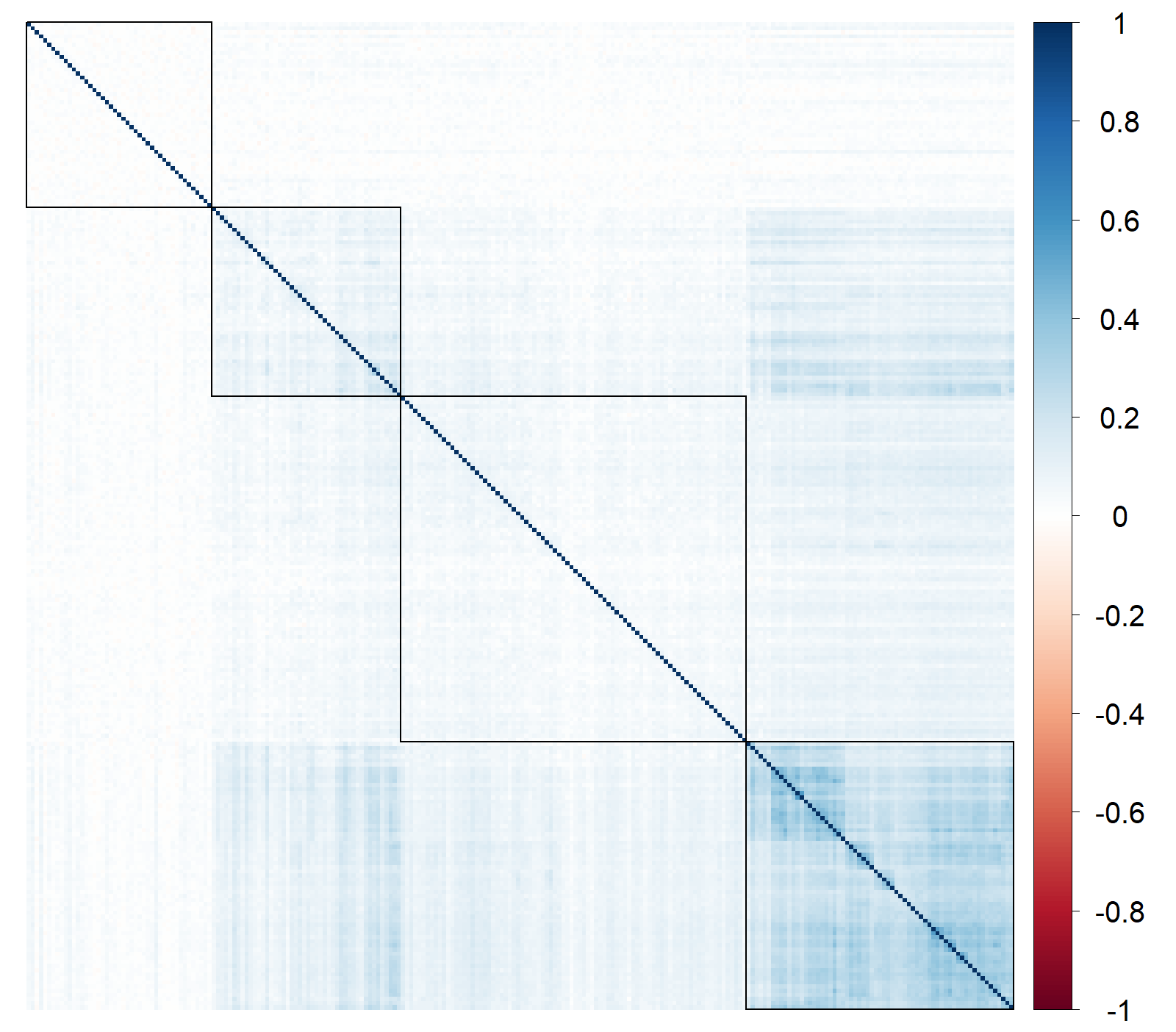}\label{fig:f2}}
  \caption{Heatmap plots of the sample Kendall's tau matrix computed on the daily log returns from 01 January 2007 until 14 January 2022 of all 240 portfolio stocks (whose list is available in~\ref{sec:stocksLists}).}   \label{fig:clustering}
\end{figure}

Note that after reordering of the variables by group, the corresponding Kendall's tau matrix is block-structured with constant values in some of the off-diagonal blocks.
The interest in investigating this structural assumption originates from applications in stock return modeling.
In this context, the clustering of the variables could for instance be considered as grouping companies by sector or economy.
It then seems at least intuitive to assume that companies from different groups have correlations that depend only on the groups they are in, without making any assumptions on the correlations between companies from the same group.
We will therefore call $\tau_{k_1, k_2}$
\textbf{the intergroup Kendall's tau} between groups $\Gc_{k_1}$ and $\Gc_{k_2}$.

\medskip

This can be clearly seen on Figure~\ref{fig:clustering}: in each of the off-diagonal blocks the Kendall's tau is mostly homogeneous, but significant differences can be seen in the fourth diagonal block. Indeed, it gathers companies whose link to other groups is constant, but with different relationships inside the group. This may be explained by the presence of subgroups inside this fourth group, even if the relationship with variables from other groups do not seem to be related to these subgroup structures. 

\medskip

Obviously, the structural assumption is satisfied for any set of variables by using only groups of length $1$. Therefore, assuming larger groups will make the assumption more constraining.
Indeed, in this framework, Kendall's tau matrix depends on
$$\frac{1}{2} K (K-1) + \frac{1}{2} \sum_{k=1}^K \lvert \Gc_k \rvert (\lvert \Gc_k\rvert -1)$$
free parameters. For a dimension of $100$, assuming we can split into $K=10$ groups of equal size, this translates to a reduction by factor of 10 of the number of free parameters to estimate (from $4950$ to $495$).
Such a reduction suggests that the use of appropriate estimators can lead to significant estimation improvements.
We will define estimators of the Kendall's tau matrix $\T$ under Assumption \ref{as:structural} for some known partitions $\Gc$ of $ \{1,\ldots, p\}$. Note that such partition can also be inferred from the data, see \cite{perreault,perreault2020hypothesis}
and the thesis~\cite{perreault2020structures}.
Even if Assumption \ref{as:structural} is not satisfied, the estimators that we will propose can still be of interest, for example to do linear shrinkage.

\medskip

Note that although our results are only interesting when $k_1 \neq k_2$, they also hold if a pair of form $(k_1, k_1)$ belongs to $J$. Indeed in this case we must have $\tau_{k_1, k_1} = 1$ and then all pairs of observations in the block will be concordant, so all our estimators will be equal to $1$.

\medskip

In this article, we are interested in the estimation of the intergroup Kendall's tau $\tau_{k_1, k_2}$ for some pair $(k_1, k_2)$ belonging to $J$, i.e. such that the block $[\T]_{\Gc_{k_1} \times \Gc_{k_2}}$ is a constant block.
This means that the random vector $(\X_{\Gc_{k_1}}^\top, \X_{\Gc_{k_2}}^\top)^\top$ satisfy the following assumption.
\begin{assumption}[Simplified Structural Assumption]
    \label{as:simp_structural} 
    Let $\X$ be a $p$-dimensional random vector of interest.
    Kendall's tau matrix of the random vector $\X$ can be written in the block form
    \begin{align*}
        \T = \begin{pmatrix}
            \cdot & \tau \1 \\
            \tau \1 & \cdot
        \end{pmatrix}
    \end{align*}
    where $\tau \1$ represent a block filled with the value $\tau \in [-1, 1]$ and the symbol $\cdot$ represent any matrices of respective sizes $b_1 \times b_1$ and $b_2 \times b_2$ for some $b_1 \in \{1, \dots, p\}$ and $b_2 := p - b_1$.
\end{assumption}
Under Assumption~\ref{as:simp_structural},
we call $\tau$ \textbf{the intergroup Kendall's tau}, as a particular case of the previous framework.
As discussed above, Assumption~\ref{as:simp_structural} may seem more constraining than the previous Assumption~\ref{as:structural}, but both assumptions are actually equivalent insofar as we are only interested in the estimation of each interblock Kendall's tau $\tau_{k_1, k_2}$.
Therefore and in order to simplify the notations, we will choose to assume Assumption~\ref{as:simp_structural}.
Similarly as the more general Assumption~\ref{as:structural}, Assumption~\ref{as:simp_structural} can be tested using the framework developed in~\cite{perreault2020hypothesis}.

\medskip

Note that \cite{perreault} proposed a similar model with a more restrictive version of Assumption~\ref{as:structural}, the Partial Exchangeability Assumption, by assuming that the variables could be partitioned into $K$ clusters with exchangeable dependence.

\begin{assumption}[Partial Exchangeability Assumption]
\label{as:PEA}
    For $j \in\{1, \ldots, p\}$, let $U_{j}=F_{j}\left(X_{j}\right)$, where $F_j$ is the cumulative distribution function of $X_j$, and $C$ be the copula of $\X$.
    For any partition $\Gc=\left\{\Gc_{1}, \ldots, G_{K}\right\}$ of $\{1, \ldots, p\}$, let $\pi(G)$ be the set of permutations $\pi$ of $\{1, \ldots, p\}$ such that for all $j \in\{1, \ldots, p\}$ and all $k \in\{1, \ldots, K\}$,  $\pi(j) \in \Gc_{k}$ if and only if $j \in \Gc_{k}$.
    
    A partition $\Gc=\left\{\Gc_{1}, \ldots, G_{K}\right\}$ satisfies the Partial Exchangeability Assumption if for any $u_{1}, \ldots, u_{p} \in[0,1]$ and any permutation $\pi \in \pi(G)$, one has
    $$
    C\left(u_{1}, \ldots, u_{p}\right)
    = C\left(u_{\pi(1)}, \ldots, u_{\pi(p)}\right),
    $$
    or, equivalently, $\left(U_{1}, \ldots, U_{p}\right) \equalinlaw \left(U_{\pi(1)}, \ldots, U_{\pi(p)}\right)$.
\end{assumption}
Note that the Partial Exchangeability Assumption imposes restrictions on the underlying copula, whereas Assumption \ref{as:structural} only does so on the underlying Kendall's tau matrix, making it a lot less restrictive.
Further, under Assumption \ref{as:PEA}, Kendall's tau matrix is fully block-structured including constant diagonal blocks as well, after reordering of the variables.
In contrast to \cite{perreault}, we are more interested in a model where we do not consider partial exchangeability nor constant interdependence of marginal variables within the same cluster. 
Particularly in view of the aforementioned application of stock returns, the Partial Exchangeability Assumption seems quite restrictive and a model without partial exchangeability in which companies from the same cluster have different mutual dependence is more plausible (see Figure~\ref{fig:clustering}).
For these reasons, we opt for a more flexible variant of this model.

\subsection{Consequences of the block structure on the correlation matrix}
\label{sec:consequences_corMatrix}

As explained in \cite[Chapter 3]{kurowicka2006uncertainty}, if $\X$ follows a multivariate Gaussian distribution with exchangeable dependence and correlation $\rho \in (-1,1)$, then $\Corr(\X)$ is positive definite if and only if $\rho > -1/(p-1)$. In terms of Kendall's tau, this constraint translates as $\tau > - (2/\pi) \mathrm{Arcsin}(1/(p-1))$.
Assumption~\ref{as:structural} is weaker than exchangeable dependence, and even in the Gaussian setting with two groups of variables, it allows for \textit{arbitrary} negative correlation in the off-diagonal block.
The results presented in this section are related to the results of~\cite{cadima2010eigenstructure} who study the eigenstructure of such block structured correlation matrices, but without giving precise constraint on the allowed values of the correlation.
\cite{mcneil2022attainability} discuss attainability of Kendall's tau matrices in a general framework, i.e. without discussing the block structure specifically.

\begin{proposition}
    Let $b_1, b_2 \geq 2$ be integers. Let $\rho_1, \rho_2, \rho_3 \in (-1, 1)$, and define the block matrix $M \in \Rb^{(b_1 + b_2)^2}$ with blocks of size $b_1$ and $b_2$ by
    \begin{align*}
        M := \left( \begin{array}{cc}
            I + \rho_1 \widetilde{I} & \rho_3 \1 \\
            \rho_3 \1 & I + \rho_2\widetilde{I}
        \end{array} \right),
    \end{align*}
    where $I$ is the identity matrix and $\widetilde{I} := \1 - I$ is the matrix with $1$ at each off-diagonal entry and $0$ on the diagonal.
    Then $M$ is positive definite if and only if
    \begin{align}
        \big(b_2 b_1 - b_1 - b_2 + 1\big) \rho_1 \rho_2
        - b_1 b_2 \rho_3^{2}
        + (b_2 - 1) \rho_2
        + (b_1 - 1) \rho_1
        + 1 > 0.
        \label{eq:constraint_rho12}
    \end{align}
    Furthermore, this inequality is satisfied as soon as
    \begin{align*}
        \left\{ \begin{array}{l}
            \rho_{1}
            > \dfrac{b_{1} b_{2} \rho_{3}^{2} - (b_{2} - 1) \rho_{2} - 1}{
            \left(b_{1} b_{2}-b_{1}-b_{2}+1\right) \rho_{2} + b_{1} - 1}, \\
            \rho_{2}
            > - \dfrac{b_{1}-1}{b_{1} b_{2} - b_{1} - b_{2} + 1}.
        \end{array} \right.
    \end{align*}
    \label{prop:correlation_matrix}
\end{proposition}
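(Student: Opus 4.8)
The plan is to exploit the invariance of $M$ under every permutation of the coordinates that preserves the two groups $\{1,\dots,b_1\}$ and $\{b_1+1,\dots,b_1+b_2\}$. This symmetry block-diagonalises $M$ on a fixed family of invariant subspaces that do not depend on $\rho_1,\rho_2,\rho_3$. Concretely, I would split $\Rb^{b_1+b_2}=V_0\oplus V_1\oplus V_2$, where $V_0$ is the two-dimensional space of vectors that are constant on each group, $V_1$ is the space of vectors supported on the first group with zero sum, and $V_2$ its analogue on the second group. A one-line check shows each subspace is $M$-invariant: on zero-sum vectors the all-ones off-diagonal block $\rho_3\one$ vanishes and $J$ acts as $-I$, so $M$ restricts to the scalar $1-\rho_1$ on $V_1$ (with multiplicity $b_1-1$) and to $1-\rho_2$ on $V_2$ (with multiplicity $b_2-1$).

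On the remaining subspace $V_0$, expressing vectors in the basis given by the two group-indicator vectors, $M$ restricts to a $2\times 2$ matrix whose diagonal entries are $1+(b_1-1)\rho_1$ and $1+(b_2-1)\rho_2$ and whose off-diagonal entries are proportional to $\rho_3$. Since $M$ is positive definite precisely when all three restrictions are, and the factors $1-\rho_1,\,1-\rho_2$ are automatically positive under $\rho_1,\rho_2\in(-1,1)$, everything reduces to the $V_0$-block. The determinant of this block is a positive multiple of $(1+(b_1-1)\rho_1)(1+(b_2-1)\rho_2)-b_1b_2\rho_3^2$; expanding it and substituting $(b_1-1)(b_2-1)=b_1b_2-b_1-b_2+1$ reproduces verbatim the left-hand side of~\eqref{eq:constraint_rho12}. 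Equivalently, one may read the same expression off the global factorisation $\det M=(1-\rho_1)^{b_1-1}(1-\rho_2)^{b_2-1}\big[(1+(b_1-1)\rho_1)(1+(b_2-1)\rho_2)-b_1b_2\rho_3^2\big]$.

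The main obstacle is that a positive $2\times 2$ determinant only forces the two eigenvalues of the $V_0$-block to share a common sign, so~\eqref{eq:constraint_rho12} must be paired with a sign condition on one diagonal entry to secure positive definiteness rather than negative definiteness. I would handle this by noting that $1+(b_2-1)\rho_2>0$ — equivalently positive definiteness of the diagonal block $I+\rho_2 J$ — together with~\eqref{eq:constraint_rho12} forces both eigenvalues positive. This is exactly what the two sufficient (``as soon as'') conditions encode: using $b_1b_2-b_1-b_2+1=(b_1-1)(b_2-1)$, the second condition $\rho_2>-(b_1-1)/(b_1b_2-b_1-b_2+1)$ is equivalent to $1+(b_2-1)\rho_2>0$, i.e.\ to the coefficient of $\rho_1$ in~\eqref{eq:constraint_rho12} being strictly positive; once that coefficient is positive, solving the inequality~\eqref{eq:constraint_rho12}, which is affine in $\rho_1$, for $\rho_1$ yields the first condition. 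Conversely, these two conditions make the $V_0$-block positive definite, and combined with the automatic eigenvalues $1-\rho_1,\,1-\rho_2>0$ they render $M$ positive definite, which is the step I would use to close the argument.
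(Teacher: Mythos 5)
Your decomposition is exactly the one the paper uses: the zero-sum vectors within each group give the eigenvalues $1-\rho_1$ (multiplicity $b_1-1$) and $1-\rho_2$ (multiplicity $b_2-1$), and the problem reduces to the $2\times 2$ matrix $\bigl(\begin{smallmatrix} 1+(b_1-1)\rho_1 & b_1\rho_3 \\ b_2\rho_3 & 1+(b_2-1)\rho_2 \end{smallmatrix}\bigr)$ acting on the span of the two group-indicator vectors. The only divergence is in how that block is handled: the paper writes its two eigenvalues explicitly and squares the inequality expressing positivity of the smaller one, whereas you use the determinant together with the sign of a diagonal entry. Your identification of the determinant with the left-hand side of~\eqref{eq:constraint_rho12} and your derivation of the two sufficient conditions are correct.

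The place where you are more careful than the paper is worth flagging, because it is not a cosmetic difference. As you observe, positivity of the determinant only forces the two eigenvalues of the $V_0$-block to share a sign, so~\eqref{eq:constraint_rho12} alone does not yield positive definiteness; one also needs $1+(b_2-1)\rho_2>0$ (equivalently, positivity of the trace of the block). The paper's proof conceals this in the squaring step, which is an equivalence only when $1+\tfrac{(b_1-1)\rho_1}{2}+\tfrac{(b_2-1)\rho_2}{2}>0$. Indeed the ``if'' direction of the stated equivalence fails without that extra condition: for $b_1=b_2=3$, $\rho_1=\rho_2=-0.9$, $\rho_3=0$, the left-hand side of~\eqref{eq:constraint_rho12} equals $0.64>0$, yet $I+\rho_1 J$ has the eigenvalue $1+2\rho_1=-0.8<0$, so $M$ is not positive definite. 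What your argument actually establishes --- necessity of~\eqref{eq:constraint_rho12} for positive definiteness, and sufficiency of the displayed pair of conditions --- is the correct version of the proposition, and your proof of it is complete; just state explicitly that you are proving that amended claim rather than the literal ``if and only if''.
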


This result is proved in Appendix~\ref{proof:prop:correlation_matrix}. 
We can remark that the constraint \eqref{eq:constraint_rho12} is always satisfied as soon as
\begin{equation}
    \rho_1 \rho_2 \geq \rho_3^2,
    \label{eq:asympt_constr_rho}
\end{equation}
i.e. the absolute value of $\rho_3$ has to be smaller than the geometric mean of $\rho_1$ and $\rho_2$.
Furthermore, in the high-dimensional setting where $b_1, b_2 \to + \infty$ and $\rho_1, \rho_2$ are fixed and positive, \eqref{eq:constraint_rho12} actually becomes equivalent to the simplified constraint \eqref{eq:asympt_constr_rho}.
Note that \eqref{eq:constraint_rho12} allows for situations where $\rho_3$ is arbitrarily close to $-1$, for any choice of block sizes. This is possible, for example, by setting $\rho_1 = \rho_2 = |\rho_3|$. Concretely, if all variables of one group have a high correlation with all variables of the second group, then in each group the intragroup correlation should be high.
Such a result translates directly for Kendall's tau matrix by using the relationship $\tau = (2/\pi) \mathrm{Arcsin}(\rho)$, allowing for Kendall's tau matrix with arbitrary entries in the off-diagonal blocks.

\medskip

Rather surprisingly, as soon as the group sizes $b_1$ and $b_2$ are large enough, they don't appear anymore in the constraint~\eqref{eq:asympt_constr_rho}. This phenomenon is in fact typical of block-structured matrices and will appear again in the performance of our estimators in the next sections.
We now give a lower bound for the intergroup Kendall's tau in the setting where $K$ groups are present, with equal intergroup Kendall's tau. It is proved in Appendix~\ref{proof:prop:correlation_matrix}.

\begin{proposition}
    Let $K \geq 2$ and let $b_1, \dots, b_K$ be positive integers. Let $\rho \in (-1,1)$, $\Sigma_1, \dots, \Sigma_K$ be $K$ correlation matrices of size $b_1, \dots, b_K$ respectively and let $M \in \Rb^{(b_1 + b_2 + \dots + b_K)^2}$ be the block matrix defined by
    \begin{align*}
        M := \left( 
        \begin{array}{cccc}
            \Sigma_1 & \rho \1 & \cdots & \rho \1 \\
            \rho \1 & \Sigma_2 & \ddots & \vdots \\
            \vdots  & \ddots   & \ddots & \rho \1  \\
            \rho \1 & \cdots   & \rho \1 & \Sigma_K \\
        \end{array}
        \right)
    \end{align*}
    Then
    \begin{align*}
        \inf \big\{ \rho \in (-1, 1): \exists \Sigma_1, \dots, \Sigma_K
        \text{ such that } M \text{ is a correlation matrix } \big\}
        = \frac{-1}{K - 1}.
    \end{align*}
    \label{proof:lowerbound_rho_K}
\end{proposition}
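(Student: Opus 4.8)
The plan is to establish the value of the infimum by proving two matching bounds: an \emph{achievability} bound exhibiting admissible $\Sigma_1, \dots, \Sigma_K$ for every $\rho \geq -1/(K-1)$, and a \emph{necessity} bound showing that any $\rho$ for which some admissible $\Sigma_1, \dots, \Sigma_K$ exist must satisfy $\rho \geq -1/(K-1)$. Throughout I assume $K \geq 2$, the case $K=1$ being vacuous since then $M = \Sigma_1$ carries no off-diagonal block. The single structural feature I would exploit is that each off-diagonal block equals $\rho\, \1 \1^\top$, so $M$ interacts with a vector only through its within-group sums; this suggests restricting attention to test vectors that are constant on each group.

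For the achievability direction, I would take $\Sigma_k = \1$, the $b_k \times b_k$ all-ones matrix (a valid, rank-one correlation matrix), for every $k$. Then $M$ has entries $1$ within groups and $\rho$ across groups. Decomposing $\Rb^p$ into the $K$-dimensional subspace of group-constant vectors and its orthogonal complement, one checks that $M$ annihilates every vector whose restriction to each group sums to zero, so positive semidefiniteness reduces to the group-constant subspace. There, writing $w_k := b_k c_k$ for a vector taking value $c_k$ on group $k$, the quadratic form becomes
\begin{equation*}
    v^\top M v = (1-\rho)\sum_{k=1}^K w_k^2 + \rho \Big(\sum_{k=1}^K w_k\Big)^2,
\end{equation*}
i.e.\ the $K \times K$ form with matrix $(1-\rho) I + \rho\, \1 \1^\top$, whose eigenvalues are $1-\rho$ (multiplicity $K-1$) and $1 + \rho(K-1)$. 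Both are nonnegative precisely when $-1/(K-1) \leq \rho \leq 1$, so this choice realizes every such $\rho$ (including $\rho = -1/(K-1)$, where $M$ is singular but still PSD), and shows the infimum is at most $-1/(K-1)$.

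For the necessity direction, I would test an arbitrary admissible $M$ against the single group-constant vector $v$ with value $c_k = 1/b_k$ on group $k$. Setting $s_k := \1^\top \Sigma_k \1$ (the sum of all entries of $\Sigma_k$) and noting the cross-terms contribute $\rho$ for each of the $K(K-1)$ ordered pairs of distinct groups, one gets
\begin{equation*}
    0 \leq v^\top M v = \sum_{k=1}^K \frac{s_k}{b_k^2} + \rho\, K(K-1).
\end{equation*}
Because every entry of a correlation matrix lies in $[-1,1]$, one has $s_k \leq b_k^2$, hence $\sum_k s_k/b_k^2 \leq K$; rearranging the displayed inequality and dividing by $K(K-1) > 0$ yields $\rho \geq -1/(K-1)$. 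Combined with the achievability bound, this gives the claimed value of the infimum.

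The only delicate point is the necessity step: the bound must hold simultaneously for \emph{every} choice of the diagonal blocks $\Sigma_k$, so one needs a single test vector that defeats all of them at once. This is exactly what the uniform estimate $s_k = \1^\top \Sigma_k \1 \leq b_k^2$ provides, the extremal case $s_k = b_k^2$ being attained by the all-ones matrix used in the achievability step---which is precisely why the two bounds meet. A minor bookkeeping point is to confirm that the reduction to group-constant test vectors loses nothing in the achievability direction, which follows from $M$ vanishing on the complementary (within-group zero-sum) subspace.
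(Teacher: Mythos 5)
Your proof is correct. The achievability half is essentially the paper's: both take $\Sigma_k = \1$ (the all-ones block) and reduce positive semidefiniteness to the $K \times K$ exchangeable matrix $I + \rho J$; the paper phrases this probabilistically (duplicating each coordinate of an exchangeable Gaussian vector), whereas you carry it out by the explicit spectral decomposition on the group-constant subspace, which is equivalent. The necessity half is where you genuinely diverge: the paper simply extracts the $K \times K$ principal submatrix obtained by picking one coordinate from each group, which equals $I + \rho J$ \emph{regardless} of the $\Sigma_k$ and must be positive semidefinite, giving $\rho \geq -1/(K-1)$ in one line. You instead test against the group-averaged vector $c_k = 1/b_k$ and need the auxiliary entrywise bound $\1^\top \Sigma_k \1 \leq b_k^2$; this works and is self-contained, but the principal-submatrix observation is sharper in that it needs no estimate on the diagonal blocks at all. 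One trivial caveat: your remark that $\rho = -1/(K-1)$ is itself attained fails for $K = 2$, where $-1/(K-1) = -1 \notin (-1,1)$; this does not affect the value of the infimum, which is only approached in that case, consistent with the paper's "arbitrarily close" phrasing.
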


Interestingly, this bound does not depend on the sizes of the blocks $b_1, \dots, b_K$. This shows that, for such a matrix, the structure of each block does not have a strong influence on the possible choice of intergroup Kendall's tau. The constraint $\rho \geq - 1 /(p - 1)$ for exchangeable correlation matrices becomes in this framework $\rho \geq - 1 / (K - 1)$ for exchangeable intergroup dependence, suggesting that the number of blocks becomes the relevant dimension for this problem (instead of the number of variables).
Nevertheless, the knowledge of the intragroup correlations matrices $\Sigma_1, \dots, \Sigma_K$ will still constrain the range of possible values of $\rho$, as this was the case in Proposition~\ref{prop:correlation_matrix} for the particular case $K=2$ and exchangeable dependence in each block.

\subsection{Construction of estimators}
\label{sec:est}

First, note that we can naturally rely on the usual estimator of Kendall's tau between $X_{j_1}$ and $X_{j_2}$ defined by
\begin{equation}
    \widehat{\tau}_{j_1, j_2}
    := \frac{2}{n(n-1)} \sum_{i_1 <i_2} \sign
    \big( ( X_{i_1, j_1} - X_{i_2, j_1} )
    ( X_{i_1, j_2} - X_{i_2, j_2} ) \big), \label{eq:sKT1s}
\end{equation}
for any $1 \leq j_1, j_2 \leq p$.
We denote the corresponding Kendall's tau matrix estimator by $\widehat{\T} = \left[ \widehat{\tau}_{j_1,j_2} \right]_{1 \leq j_1, j_2 \leq p}$, which serves as a first step estimator for obtaining a better estimator of the Kendall's tau matrix. 
This estimated Kendall's tau matrix $\widehat{\T}$ does not make any use of the underlying structure and will therefore be a rather naive tool in practice.
As discussed in the previous section,
we will introduce several estimators in the setting of Assumption~\ref{as:simp_structural}, with straightforward generalizations under Assumption~\ref{as:structural}.
More precisely, for every estimator $\widehat{\tau}^{\ref{as:simp_structural}}$ of $\tau$ under Assumption~\ref{as:simp_structural}, we define a corresponding matrix estimator $\widehat{\T}^{\ref{as:structural}}$ under Assumption~\ref{as:structural} by 
\begin{align}
    \widehat{\T}^{\ref{as:structural}}
    := \left[ \widehat{T}^{\ref{as:structural}}_{j_1,j_2} 
    \right]_{1 \leq j_1, j_2 \leq p} = 
    \begin{cases}
      \widehat{\tau}^{\ref{as:simp_structural}}
      (\Gc_{k_1}, \Gc_{k_2}),
      & \text{if}\ j_1 \in \Gc_{k_1}, \ j_2 \in \Gc_{k_2}
      \text{ and } (k_1, k_2) \in J, \\
      \widehat{\tau}_{j_1, j_2}, & \text{else.}
    \end{cases}
    \label{eq:def:T_hat_A1}
\end{align}
Remember that $J$ is the set of pairs $(k_1, k_2)$ of block indices such that Kendall's tau $\tau_{j_1, j_2}$ does not depend on $j_1 \in \Gc_{k_1}, \ j_2 \in \Gc_{k_2}$.
Since we assume that the Kendall's taus in (some of) the off-diagonal blocks are equal, the idea of averaging the pairwise sample Kendall's tau follows naturally. 
Let us introduce the block estimator $\widehat{\tau}^{B}$ that averages all sample Kendall's taus within each of the off-diagonal blocks. Formally, we have
\begin{equation*}
    \widehat{\tau}^{B}
    := \frac{1}{b_1 b_2}
    \sum_{j_1 = 1}^{b_1} \sum_{j_2 = b_1 + 1}^p
    \widehat{\tau}_{j_1, j_2}.
\end{equation*}
We define $\widehat{\T}^B$ the corresponding estimator under Assumption~\ref{as:structural} by Equation~\eqref{eq:def:T_hat_A1}.

\medskip

Under the Partial Exchangeability Assumption, \cite{perreault} showed that the estimator $\widehat{\T}^B$ is asymptotically normal and optimal under the Mahalanobis distance.
However, in terms of computational efficiency, the block estimator $\widehat{\T}^B$ does not show any improvement over the usual estimator $\widehat{\T}$, as both estimators require the computation of the usual Kendall's tau between all pairs of variables anyway. 

\medskip

To reduce the computation time, we propose not averaging over all Kendall's taus in the block but only over some of them. This would lead to computationally cheaper estimates. Naturally, the question arises over which elements then to average over. For this purpose, we introduce several estimators that average over different subsets of elements within each of the off-diagonal blocks.

\medskip

We introduce two estimators that each average $N \in \{1, \dots, b_1 \vee b_2 \}$ pairs in the off-diagonal block under Assumption~\ref{as:simp_structural}, so that we can compare estimators that either average pairs in the same row/column, or pairs on the diagonal. Without loss of generality, since we can switch both blocks of variables, we assume that $b_1 \leq b_2$ and we will average over the row.
For averaging pairs on the diagonal, it is moreover required that $N \leq b_1 \wedge b_2$.
The number of Kendall's tau estimates is then reduced to scaling linearly with group size, which is a significant improvement over the previous quadratic growth.
We set
\begin{align*}
    \widehat{\tau}^{R}
    := \frac{1}{N} \sum_{j=1}^N \widehat{\tau}_{1, b_1 + j},
    \text{ and }
    \widehat{\tau}^{D}
    := \frac{1}{N} \sum_{j=1}^N \widehat{\tau}_{j, b_1 + j},
\end{align*}
Then, the ``row-based'' Kendall's tau matrix estimator $\widehat{\T}^{R}$ and the ``diagonal-based'' Kendall's tau matrix estimator $\widehat{\T}^{D}$ are respectively defined by Equation~\eqref{eq:def:T_hat_A1} for the choices
$\widehat{\tau}^{\ref{as:simp_structural}} = \widehat{\tau}^{R}$
and $\widehat{\tau}^{\ref{as:simp_structural}} = \widehat{\tau}^{D}$.
As such, for each of the off-diagonal blocks $\widehat{\T}^{R}$ averages only the pairs on the first line along the largest side, whereas $\widehat{\T}^{D}$ averages only the pairs along the first diagonal.

\medskip

Lastly, we introduce the estimator that randomly selects pairs to average over per block. We denote the (deterministic) number of averaged pairs per block by $N  \in \{1, \dots, b_1 \times b_2 \}$ and the corresponding estimator by $\T^U$.
The pairs are selected with uniform probability and without replacement. We define
\begin{align*}
    \widehat{\tau}^{U}
    := \frac{1}{N} \sum_{j_1 = 1}^{b_1} \sum_{j_2 = 1}^{b_2}
    W_{j_1,j_2} \widehat{\tau}_{j_1,j_2}, 
\end{align*}
where $\W$ is a $b_1 \times b_2$ matrix of random weights that selects $N$ pairs per off-diagonal block with uniform probability and without replacement. $W_{j_1,j_2} = 1$ corresponds to selecting pair $(X_{j_1},X_{j_2})$ and $W_{j_1,j_2} = 0$ corresponds to passing over it. The corresponding matrix estimator is then denoted by $\widehat{\T}^{U}$ with $W$ defined as selecting $N$ pairs in each of the averaged blocks.

\subsection{Comparison of their variances}
\label{sec:varKT}

Before we proceed with the main theoretical results on the estimators' variances, let us introduce some auxiliary notations. 
For every $j_1, j_2 \in \{1, \ldots, p\}$ we set 
$P_{j_1, j_2} := \mathbb{P} \big(
\left(X_{1, j_1} - X_{2, j_1}\right)
\left(X_{1, j_2} - X_{2, j_2}\right) > 0 \big).$
The quantity $P_{j_1, j_2}$ is equal to the probability of concordance of the variables $X_{j_1}$ and $X_{j_2}$ and thus $\tau_{j_1, j_2} = 2P_{j_1, j_2} - 1$.
As such, the structural assumption \ref{as:simp_structural} ensures that $P_{j_1, j_2}$ is independent of the choice of pair whenever $j_1$ and $j_2$ are not in the same block.
Alternatively we can write $P_{j_1, j_2}$ in terms of the copula $C_{j_1, j_2}$ of $(X_{j_1}, X_{j_2})$ by
$P_{j_1, j_2}
= 2 \int_{[0,1]^2} C_{j_1,j_2}(u_1,u_2) \mathrm{d} C_{j_1,j_2}(u_1,u_2).$
By extension, we define
\begin{align*}
    P_{j_1, j_2, j_3, j_4} &:=
    \mathbb{P}\left(
    \left(X_{1, j_1} - X_{2, j_1}\right)
    \left(X_{1, j_2} - X_{2, j_2}\right) > 0,
    \left(X_{1, j_3} - X_{2, j_3}\right)
    \left(X_{1, j_4} - X_{2, j_4}\right) > 0 \right), \\
    Q_{j_1, j_2, j_3, j_4} &:= \mathbb{P}\left(
    \left(X_{1, j_1} - X_{2, j_1}\right)
    \left(X_{1, j_2} - X_{2, j_2}\right) > 0, 
    \left(X_{1, j_3} - X_{3, j_3}\right)
    \left(X_{1, j_4} - X_{3, j_4}\right) > 0 \right),  \\
    S_{j_1, j_2, j_3, j_4} &:= \mathbb{P}\left(
    \left(X_{1, j_1} - X_{2, j_1}\right)
    \left(X_{1, j_2} - X_{2, j_2}\right) > 0, 
    \left(X_{4, j_3} - X_{3, j_3}\right)
    \left(X_{4, j_4} - X_{3, j_4}\right) > 0 \right)
    = P_{j_1, j_2} P_{j_3, j_4},
\end{align*}
for every $j_1, j_2, j_3, j_4 \in \{1, \ldots, p\}$.
Note that both $P$ and $Q$ quantities can be understood as some kind of ``cross-concordance measures'', but there is an important difference between them:
for the $Q$ measure of cross-concordance there is a third copy $\X_{3, 1:p}$ needed.
When $j_1 = j_3$ and $j_2 = j_4$, we obtain
$P_{j_1, j_2, j_3, j_4} = P_{j_1, j_2}$.
$Q$-type measures of cross-concordance naturally appears in the asymptotic variance of the usual estimator of Kendall's tau through the particular case 
\begin{align}
    Q_{j_1, j_2}
    := Q_{j_1, j_2, j_1, j_2}
    &= \mathbb{P}\left(
    \left(X_{1, j_1} - X_{2, j_1}\right)
    \left(X_{1, j_2} - X_{2, j_2}\right) > 0,
    \left(X_{1, j_1} - X_{3, j_1}\right)
    \left(X_{1, j_2} - X_{3, j_2}\right) > 0 \right) 
    \nonumber \\
    &= \int_{[0,1]^2}
    (C_{j_1,j_2}(u_1,u_2) + \bar{C}_{j_1,j_2}(u_1,u_2))^2 
    \mathrm{d}C_{j_1,j_2}(u_1,u_2),
    \label{eq:Q_copulaExpression}
\end{align}
where $\bar{C}$ denotes the survival function of a copula $C$.
For $S$, a fourth independent copy is needed, but by independence it reduces to the product $P_{j_1, j_2} P_{j_3, j_4}$.
For completeness, this expression is derived in Appendix~\ref{proof:eq:Q_copulaExpression}.
Note that this equality was already given in \cite[Equation (8)]{VarKT}.
Note that both $P$ and $Q$ can be written as 8-dimensional integrals involving the copula of $(X_{j_1}, X_{j_2}, X_{j_3}, X_{j_4})$.
As a consequence, they are functions of the joint law $\Pb_{(j_1,j_2,j_3,j_4)}$ of the random vector $\X_{(j_1,j_2,j_3,j_4)}$. 
Note further that even under Assumption \ref{as:structural}, they are both depending on the choice of pairs within the considered off-diagonal block. Obviously, this is not the case if we assume the stronger Assumption~\ref{as:PEA}.

\medskip


To give explicit expression for our estimators, we need to average $P$, $Q$, and $S$ quantities. We need to separate these averages depending on the number of common variables in the cross-concordance terms. We define
\begin{align*}
    P_{B,2} &:= \frac{1}{b_1 b_2}
    \sum_{j_1 = 1}^{b_1} \sum_{j_2 = b_1 + 1}^{p} P_{j_1, j_2},
    \\
    P_{B,1} &:= \frac{1}{b_1 (b_1-1) b_2 + b_2 (b_2 - 1) b_1 }
    \bigg(
    \sum_{j_1 = 1}^{b_1} \sum_{j_2 = b_1 + 1}^{p}
    \sum_{j_3 = 1, \, j_3 \neq j_1}^{b_1} P_{j_1, j_2, j_3, j_2}
    + \sum_{j_1 = 1}^{b_1} \sum_{j_2 = b_1 + 1}^{p}
    \sum_{j_4 = b_1 + 1, j_4 \neq j_2}^{p}
    P_{j_1, j_2, j_1, j_4} \bigg),
    \\
    P_{B,0} &:= \frac{1}{b_1 (b_1-1) b_2 (b_2 - 1)}
    \sum_{j_1 = 1}^{b_1} \sum_{j_2 = b_1 + 1}^{p}
    \sum_{j_3 = 1, \, j_3 \neq j_1}^{b_1}
    \sum_{j_4 = b_1 + 1, j_4 \neq j_2}^{p}
    P_{j_1, j_2, j_3, j_4},
\end{align*}
and similarly, we define $P_{\alpha,\beta}$, $Q_{\alpha,\beta}$, $S_{\alpha,\beta}$ for $\alpha \in \{B, R, D\}$ and $\beta \in \{0, 1, 2\}$.
In the latter expression, $\alpha$ denotes the type of averaging (respectively, over the Block, over the Row and over the Diagonal) and $\beta$ denotes the number of common variables,
i.e. the size of $\{j_1, j_2\} \cap \{j_3, j_4\}$.
This means that for $\beta = 2$, $P_{B,2}$ is the average of the $P_{j_1, j_2, j_3, j_4}$ over the set of $(j_1, j_2, j_3, j_4)$ such that $j_1 = j_3$ and $j_2 = j_4$ (2 common variables, since both variables are the same).
$P_{B,1}$ is the average of the $P_{j_1, j_2, j_3, j_4}$ over the set of $(j_1, j_2, j_3, j_4)$ such that $j_1 = j_3$ or $j_2 = j_4$ (1 common variable, since one is the same and one is different).
Finally, $P_{B,0}$ is the average of the $P_{j_1, j_2, j_3, j_4}$ over the set of $(j_1, j_2, j_3, j_4)$ such that $j_1 \neq j_3$ and $j_2 \neq j_4$ (0 common variables, all variables are different).
To deal with the random estimator, we need to define the corresponding weighted quantities for a $p \times p$ matrix $\w$:
\begin{align*}
    P_{\w,2} &:= \frac{1}{b_1 b_2}
    \sum_{j_1 = 1}^{b_1} \sum_{j_2 = b_1 + 1}^{p}
    w_{j_1, j_2} P_{j_1, j_2},
    \displaybreak[0] \\
    P_{\w,1} &:= \frac{1}{b_1 (b_1-1) b_2 + b_2 (b_2 - 1) b_1 }
    \bigg(
    \sum_{j_1 = 1}^{b_1} \sum_{j_2 = b_1 + 1}^{p}
    \sum_{j_3 = 1, \, j_3 \neq j_1}^{b_1} w_{j_1, j_2} 
    w_{j_3, j_2}  P_{j_1, j_2, j_3, j_2} \\
    &\hspace{5cm} + \sum_{j_1 = 1}^{b_1} \sum_{j_2 = b_1 + 1}^{p}
    \sum_{j_4 = b_1 + 1, j_4 \neq j_2}^{p} w_{j_1, j_2} 
    w_{j_1, j_4}
    P_{j_1, j_2, j_1, j_4} \bigg),
    \displaybreak[0] \\
    P_{\w,0} &:= \frac{1}{b_1 (b_1-1) b_2 (b_2 - 1)}
    \sum_{j_1 = 1}^{b_1} \sum_{j_2 = b_1 + 1}^{p}
    \sum_{j_3 = 1, \, j_3 \neq j_1}^{b_1}
    \sum_{j_4 = b_1 + 1, j_4 \neq j_2}^{p}
    w_{j_1, j_2} w_{j_3, j_4}
    P_{j_1, j_2, j_3, j_4},
\end{align*}
and similarly for $Q_{\w, \beta}$ and $S_{\w, \beta}$ for $\beta \in \{0, 1, 2\}$.

\medskip

Now that we have all auxiliary notations in place, let us start by showing that each of the estimators is in fact a U-statistic.

\begin{lemma}
    \label{lem:Ustat}
    Under Assumption \ref{as:simp_structural}, 
    the estimators $\widehat{\tau}_{j_1,j_2}$, 
    $\widehat{\tau}^B$, $\widehat{\tau}^R$,
    $\widehat{\tau}^D$ and $\widehat{\tau}^{U}$
    are all second order U-statistics, in the sense that they can be written as 
    $(n(n-1))^{-1} \sum_{1 \leq i_1 \neq i_2 \leq n} g(\X_{i_1}, \X_{i_2})$,
    for some real-valued function $g$. We denote these functions respectively by $g^*$, $g^B$, $g^R$, $g^D$ and $g^U$.
\end{lemma}
This lemma is proved in Appendix~\ref{proof:lem:Ustat}
where the expression of the kernels
$g^*$, $g^B$, $g^R$, $g^D$ and $g^U$ are given.
From Lemma \ref{lem:Ustat} and the fact that $\E\left[  g\left( \X_{1} , \X_{2} \right) \right]
= \tau$ for each kernel $g$,
it follows that $\widehat{\T}$, $\widehat{\T}^B$, $\widehat{\T}^R$, $\widehat{\T}^D$ and $\widehat{\T}^U$ are all unbiased estimators of the Kendall's tau matrix under Assumption~\ref{as:structural}. The finite sample variances are given in the following theorem, proved in Appendix~\ref{proof:th:var}.

\begin{theorem}
    \label{th:var}
    Let $1 \leq j_1, j_2 \leq p$, $1 \leq b_1 \leq p$ and let $b_2 := p - b_1$.
    The variances of $\widehat{\tau}_{j_1,j_2}$, 
    $\widehat{\tau}^B$, $\widehat{\tau}^{R}$, 
    $\widehat{\tau}^{D}$, and $\widehat{\tau}^{U}$, and the conditional variance of $\widehat{\tau}^{U}$ conditionally to $\W$
    are given by
    \vspace{0.1em}
    \begin{itemize}
    \mathitem{[(i)]}{-0.19em}
    \begin{align*}
        \hspace{-9.5em}
        \Var \left[ \widehat{\tau}_{j_1,j_2} \right]
        &= \frac{8}{n(n-1)}\left(2(n-2) \left(
        Q_{j_1, j_2} - P_{j_1, j_2}^2 \right)
        + P_{j_1, j_2} - P_{j_1, j_2}^2\right),
    \end{align*}
    
    \vspace{0.5em}
    \mathitem{[(ii)]}{-0.15em}
    \begin{align*}
        \Var \left[ \widehat{\tau}^{B} \right]
        = \frac{8}{b_1 b_2 n(n-1)} \Big(
        &P_{B,2} - S_{B,2}
        + (b_1 + b_2 - 2) (P_{B,1} - S_{B,1})
        + (b_1 - 1)(b_2 - 1) (P_{B,0} - S_{B,0}) \\
        + \, 2(n-2) \big( &Q_{B,2} - S_{B,2}
        + (b_1 + b_2 - 2) (Q_{B,1} - S_{B,1})
        + (b_1 - 1) (b_2 - 1) (Q_{B,0} - S_{B,0}) \big)
         \Big).
    \end{align*}
    
    \vspace{0.15em}
    \mathitem{[(iii)]}{-0.3em} 
    \begin{align*}
        \Var \left[ \widehat{\tau}^{R} \right]
        = \frac{8}{N n(n-1)} \Big(
        P_{R,2} - S_{R,2}
        + (N - 1) (P_{R,1} - S_{R,1})
        + 2(n-2) \big( Q_{R,2} - S_{R,2}
        + (N - 1) (Q_{R,1} - S_{R,1}) \big)  \Big).
    \end{align*}
    
    \vspace{0.2em}
    \mathitem{[(iv)]}{-0.2em} 
    \begin{align*}
        \Var \left[ \widehat{\tau}^{D} \right]
        = \frac{8}{N n(n-1)} \Big(
        P_{D,2} - S_{D,2}
        + (N - 1) (P_{D,0} - S_{D,0})
        + 2(n-2) \big( Q_{D,2} - S_{D,2}
        + (N - 1) (Q_{D,0} - S_{D,0}) \big) \Big).
    \end{align*}
    
    \vspace{0.4em}
    \mathitem{[(v)]}{-0.4em}
    \begin{align*}
        \Var \left[ \widehat{\tau}^{U} | \W \right]
        = \frac{8}{b_1 b_2 n(n-1)} \Big(
        &P_{\W,2} - S_{\W,2}
        + (b_1 + b_2 - 2) (P_{\W,1} - S_{\W,1})
        + (b_1 - 1)(b_2 - 1) (P_{\W,0} - S_{\W,0}) \\
        + 2(n-2) \big( &Q_{\W,2} - S_{\W,2}
        + (b_1 + b_2 - 2) (Q_{\W,1} - S_{\W,1})
        + (b_1 - 1) (b_2 - 1) (Q_{\W,0} - S_{\W,0}) \big) \Big).
    \end{align*}
    
    \vspace{0em}
    \mathitem{[(vi)]}{-0.3em}
    \begin{align*}
        & \Var \left[\widehat{\tau}^{U} \right]
        = \Var \left[ \tau_{J_1, J_2} \right]
        + \frac{8}{b_1 b_2 n(n-1)} \Big(
        2(n-2) \\
        &\times \big( Q_{B,2} - S_{B,2}
        + \frac{N - 1}{b_1 b_2 - 1}
        (b_1 + b_2 - 2) (Q_{B,1} - S_{B,1})
        + \frac{N - 1}{b_1 b_2 - 1}
        (b_1 - 1) (b_2 - 1) (Q_{B,0} - S_{B,0}) \big) \\
        & + \big( P_{B,2} - S_{B,2}
        + \frac{N - 1}{b_1 b_2 - 1}
        (b_1 + b_2 - 2) (P_{B,1} - S_{B,1})
        + \frac{N - 1}{b_1 b_2 - 1}
        (b_1 - 1)(b_2 - 1) (P_{B,0} - S_{B,0}) \big) \Big),
        \hspace{5cm}
    \end{align*}
    where $J_1$ and $J_2$ are independent random variables, uniformly distributed on
    $\{1, \dots, b_1\}$ and $\{b_1+1, \dots, p\}$,
    respectively.
    \end{itemize}
\end{theorem}

Note that the variance of the usual Kendall's tau estimator $\widehat{\tau}_{j_1,j_2}$ is already known, see e.g. \cite{VarKT}. We can also remark that this theorem always holds, even if Assumption~\ref{as:simp_structural} is not satisfied. However in such a case, the estimators will have different expectations: this would be the average of the considered Kendall's tau over the corresponding pairs.
Note that the conditional variance $\Var \left[ \widehat{\tau}^{U} | \W \right]$ is the variance of the estimator $\widehat{\tau}^{U}$ for a fixed choice of pairs to average; it only measure the variability due to the randomness of the sample.
On the contrary, the unconditional variance $\Var \left[\widehat{\tau}^{U} \right]$ takes into account both the randomness of the sample and the randomness of the choice of the pairs.

\begin{remark}
    If Assumption~\ref{as:simp_structural} holds, the first term in the variance of $\widehat{\tau}^{U}$ vanishes, and $P_{B,2} = P_{R,2} = P_{D,2} = P_{j_1, j_2}$ and all $S$ terms become equal to $P_{j_1, j_2}^2$.
    If the stronger Assumption \ref{as:PEA} holds, all quantities $P_{j_1,j_2,j_3,j_4}$ and $Q_{j_1,j_2,j_3,j_4}$ becomes independent of the choice of indices $(j_1,j_2,j_3,j_4)$. Therefore, all $P$-type averages becomes equal, as well as all $Q$-type averages.
\end{remark}

\begin{corollary}
    \label{corollary:asympt_norm_uncond}
    Under Assumption \ref{as:simp_structural},
    $P^2 := P_{j_1, j_2}^2$ does not depend on the choice of $j_1, j_2$ and it holds that as $n\to \infty$
    \begin{align*}
        n^{1 / 2}
        \left( \widehat{\tau} - \tau \right) 
        \stackrel{\textnormal{law}}{\longrightarrow} 
        \mathcal{N}\left(0, V\right),
    \end{align*}
    here $\widehat{\tau}$ denotes any of the estimators $\widehat{\tau}_{j_1,j_2}$,
    $\widehat{\tau}^{B}$,
    $\widehat{\tau}^{R}$,
    $\widehat{\tau}^{D}$,
    $\widehat{\tau}^{U}$ given $\W$, 
    $\widehat{\tau}^{U}$,
    and the corresponding asymptotic variances $V$ are respectively given by
    \begingroup \allowdisplaybreaks
    \begin{align}
        \label{eq:def_asymptV1}
        &V_{j_1,j_2} \hspace{0.14em} = V_{j_1,j_2}(\Pb_\X) \hspace{0.14em} 
        := 16 (Q_{j_1, j_2} - P^2), \\[1em]
        &V^B = V^B(\Pb_\X)
        := \frac{16}{b_1 b_2}
        \Big( Q_{B,2} - P^2
        + \big( b_1 + b_2 - 2 \big) \big( Q_{B,1} - P^2 \big)
        + \big(b_1 - 1 \big) \big(b_2 - 1\big) 
        (Q_{B,0} - P^2) \Big) , \\
        &V^R = V^R(\Pb_\X)
        := \frac{16}{N} \Big( Q_{R,2} - P^2
        + (N - 1) (Q_{R,1} - P^2) \Big),\\
        &V^D = V^D(\Pb_\X)
        := \frac{16}{N} \Big( Q_{D,2} - P^2
        + (N - 1) (Q_{D,0} - P^2) \Big),\\
        &V^{U|\W} = V^{U|\W}(\Pb_\X)
        := \frac{16}{N} \Big( Q_{\W,2} - P^2
        + \big( b_1 + b_2 - 2 \big)
        \big( Q_{\W,1} - P^2 \big)
        + \big(b_1 - 1 \big) \big(b_2 - 1\big) 
        (Q_{\W,0} - P^2) \Big) , \\
        &V^U = V^U(\Pb_\X)
        := \frac{16}{N} \bigg( Q_{B,2} - P^2
        + \frac{N - 1}{b_1 b_2 - 1 }
        \Big( \big( b_1 + b_2 - 2 \big)
        \big(Q_{B,1} - P^2 \big)
        + \big( b_1 - 1 \big) \big( b_2 - 1 \big) 
        \big( Q_{B,0} - P^2 \big) \Big) \bigg),
        \label{eq:def_asymptVU}
    \end{align}
    \endgroup
    where $\Pb_\X$ denotes the law of the random vector $\X$.
    Furthermore, these distributions are degenerate whenever the corresponding $Q$-type averages are equal to $P^2$. A sufficient condition for this to happen is when all Kendall's tau are equal to $1$.
\end{corollary}

This corollary can straightforwardly be derived by combining Theorem A of~\cite[Section 5.5.1]{serfling2009approximation} and the computations of the corresponding $\zeta_1$'s in the proof of Theorem~\ref{th:var}.
The asymptotic normality of $\widehat{\tau}^{B}$ was already known to hold under the stronger assumption of partial exchangeability, see \cite[Theorem 1.2]{perreault2020structures}.
From the lengthy expressions in Theorem \ref{th:var} we can derive the asymptotic variances in the setting where
$n, b_1, b_2, N \to +\infty$.
\begin{corollary}
\label{corollary:uncond}
Under Assumption \ref{as:simp_structural}, as
$n, b_1, b_2, N \to +\infty$, we have the following equivalents
\begin{itemize}
    \item $\displaystyle \Var \left[ \widehat{\tau}_{j_1,j_2} \right] 
    \sim \frac{16}{n}
    \left(Q_{j_1,j_2} - P^2 \right)
    = \frac{1}{n} \times V_{j_1,j_2} (\Pb_\X),$
    \item $\displaystyle \Var \left[ \widehat{\tau}^{B} \right] 
    \sim \frac{16}{n}
    \left(Q_{B,0} - P^2 \right)
    = \frac{1}{n} \times \liminftyg V^B(\Pb_\X),$
    \item $\displaystyle \Var \left[ \widehat{\tau}^{R} \right] 
    \sim \frac{16}{n} \left(Q_{R,1} - P^2 \right)
    = \frac{1}{n} \times \liminftyd V^R(\Pb_\X),$
    \item $\displaystyle \Var \left[ \widehat{\tau}^{D} \right] 
    \sim \frac{16}{n} \left(Q_{D,0} - P^2 \right)
    = \frac{1}{n} \times \liminftyd V^D(\Pb_\X),$
    \item $\displaystyle \Var \left[ \widehat{\tau}^{U} | \W \right] 
    \sim \frac{16}{n} \left(Q_{\W,0} - P^2 \right)
    = \frac{1}{n} \times \liminftygd V^{U|\W}(\Pb_\X),$
    \item $\displaystyle \Var \left[ \widehat{\tau}^{U} \right] 
    \sim \frac{16}{n} \left(Q_{B,0} - P^2 \right)
    = \frac{1}{n} \times \liminftygd V^U(\Pb_\X),$
\end{itemize}
assuming respectively that $Q_{j_1,j_2}, Q_{B,0}, Q_{R,1}, Q_{D,0}, Q_{\W,0}, Q_{B,0}$ are strictly larger than $P^2$. If this is not the case, the corresponding variances converges to $0$ at a rate faster than $O(1/n)$.
\end{corollary}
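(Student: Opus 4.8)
The plan is to work directly from the exact finite-sample variances in Theorem~\ref{th:var} and to extract, in each of the five cases, the single dominant term, discarding everything else as lower order. The starting observation is that every quantity appearing in these formulas, namely $P_{k_1,k_2}$ and the block-averages of $Q,R,S,T,U$, is a probability and hence lies in $[0,1]$; consequently the centered quantities such as $Q^B_{k_1,k_2}-P^2_{k_1,k_2}$ are all bounded in absolute value by a fixed constant, uniformly in the group sizes and in $N_{k_1,k_2}$. This boundedness is what lets me compare terms purely by their combinatorial and sample-size prefactors.

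Writing $g_i := \lvert \mathcal{G}_{k_i} \rvert$ for brevity, I would then note that in each formula the leading contribution is the one carrying the factor $2(n-2)$ multiplied by the largest combinatorial coefficient and by the relevant average ($U^B_{k_1,k_2}$, $S^R_{k_1,k_2}$, or $U^D_{k_1,k_2}$). For instance, in the block variance (ii) the term $2(n-2)(g_1-1)(g_2-1)(U^B_{k_1,k_2}-P^2_{k_1,k_2})$ is of order $n\,g_1 g_2$, whereas every other term is of strictly smaller order; after multiplication by the prefactor $8/(g_1 g_2\, n(n-1))$ and using $(g_1-1)(g_2-1)/(g_1 g_2)\to 1$ together with $(n-2)/(n-1)\to 1$, this single term produces exactly $\tfrac{16}{n}(U^B_{k_1,k_2}-P^2_{k_1,k_2})$. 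The same bookkeeping applies verbatim to the other estimators: the leading combinatorial coefficient is $(N_{k_1,k_2}-1)$ for the row and diagonal estimators, and $\tfrac{N_{k_1,k_2}-1}{\lvert \mathcal{B}_{k_1,k_2}\rvert -1}(g_1-1)(g_2-1)\sim N_{k_1,k_2}$ for the random estimator, each time cancelling against the normalizing denominator to leave $\tfrac{16}{n}$ times the appropriate average.

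The remaining terms are then bounded one by one. Using the boundedness from the first step, the intermediate terms divided by the leading prefactor are of order $\tfrac{1}{n}(g_1^{-1}+g_2^{-1})$ (those carrying the coefficient $g_1+g_2-2$), of order $\tfrac{1}{N_{k_1,k_2}\,n}$ or $\tfrac{1}{g_1 g_2\, n}$ (the $Q$-type terms), or smaller; all of these vanish relative to $\tfrac{1}{n}$ as $n, g_1, g_2, N_{k_1,k_2}\to +\infty$. This simultaneously proves the stated equivalents under the hypothesis that $U^B_{k_1,k_2}$, $U^D_{k_1,k_2}$, $S^R_{k_1,k_2}$ are asymptotically constant and strictly larger than $P^2_{k_1,k_2}$, and it gives the last sentence of the statement for free: if instead the relevant average converges to $P^2_{k_1,k_2}$, the leading term vanishes identically and only the terms already shown to be $o(1/n)$ survive, so the variance decays faster than $O(1/n)$.

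The main obstacle is conceptual rather than computational. The averages $U^B_{k_1,k_2}, S^R_{k_1,k_2}, U^D_{k_1,k_2}$ are themselves averages over an index set whose cardinality grows with $g_1, g_2$ or $N_{k_1,k_2}$, so a priori they need not converge; the hypothesis that they stabilize to a constant is exactly what makes the limit of the leading term well defined, and I would flag this explicitly. The only genuine check beyond that is verifying that no intermediate term competes with the leading one, i.e.\ that $g_1+g_2 = o(g_1 g_2)$ and that the constant-in-$n$ contributions are dominated by the $2(n-2)$ factor; both are immediate once all the ratios are written out.
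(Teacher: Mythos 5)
Your proposal is correct and follows exactly the route the paper intends: the corollary is stated as a direct consequence of the finite-sample formulas in Theorem~\ref{th:var} (no separate proof is given), and your term-by-term bookkeeping --- isolating the $2(n-2)$ term with the largest combinatorial coefficient, checking that the prefactors cancel to $16/n$, and using boundedness of the probability differences to dismiss the rest --- is the intended derivation. Your explicit flagging of the need for $U^B_{k_1,k_2}$, $U^D_{k_1,k_2}$, $S^R_{k_1,k_2}$ to stabilize, and your treatment of the degenerate case, match the hypotheses stated in the corollary.
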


Surprisingly, note that the variances \textbf{do not depend on the block dimensions} as soon as they are large enough. This is also true if the block dimensions tends to the infinity at different rates. In the limit, the quality of the estimator will therefore not improve by averaging over additional elements in general. Note that this is coherent, since we do not assume the dependence to converge to $0$, which would correspond to some mixing assumption. Therefore, averaging must have only a limited effect, as in the simpler statistical model $Y_i = \theta + \epsilon_i$ where $\epsilon = (\epsilon_1, \dots, \epsilon_n)$ follows a centered exchangeable Normal distribution with correlation $\rho > 0$. In this case, the average $\overline Y_n$ is inconsistent for estimating $\theta$ since $\E[(\overline Y_n - \theta)^2] = n^{-2} \sum_{i,j=1}^n \E[\epsilon_i \epsilon_j] = 1/n + (n-1) \rho / n \to \rho$ as $n \to \infty$.

\medskip

For large sample sizes, only the $Q$-type averages determine the levels of variance.
For the diagonal, random and block estimators, the number of terms corresponding to non-overlapping combinations grow faster than the number of overlapping combinations. 
However, for the row estimator only pairs within the same row are averaged, and thus the limiting variance contains the quantity $Q_{R,1}$ (instead of a hypothetical $Q_{R,0}$).

\medskip

\noindent\textbf{Open problem 1.} As all asymptotic variances are equal up to a constant, it seems logical to ask which is the best estimator. For this, we need to compare these constants. However, these constants are defined using $8$-dimensional integrals, making explicit computations difficult.

\medskip

Interestingly, the block estimator and the random estimator perform equally well in the limit.
Hence, we can greatly reduce computation time by using the random estimator instead of the block estimator, while still maintaining a low asymptotic variance.

\medskip

This is coherent with Theorem 1 of \cite{perreault} which shows that under Assumption \ref{as:PEA} the block averaging estimator is optimal with respect to the Mahalanobis distance.
Furthermore, since the diagonal estimator averages solely over non-overlapping combinations, note that it should converge faster than that of the random estimator.
Therefore, if computation costs are to be reduced, the diagonal estimator is preferable to both the random and the row estimator.

\medskip

Lastly, we note that if only part of the row or diagonal is averaged, the asymptotic variances of the resulting estimators do not change. By doing so we can further lower computation times, but at the cost of attaining the limiting variances at slower rates. It therefore makes sense to choose $N$ large enough to attain the asymptotic regime, but not too large to keep a low computation time.

\section{Fast estimation of conditional Kendall's tau matrix}
\label{sec:CKT}

We extend the aforementioned setting to the conditional setup, when a $d$-dimensional covariate $\Z$ is available taking values in $\mathcal{Z} \subset \Rb^d$. Formally, this means that we observe a sample $(\X_i, \Z_i)_{i=1, \dots, n}$ of $n$ independent and identically distributed replications of a random vector $(\X, \Z) \in \Rb^{p+d}$.
The objective is now to estimate the $p \times p$ conditional Kendall's tau matrix $\T_{\mid \Z=\z} = \left[ \tau_{j_1,j_2\mid \Z=\z} \right]_{1 \leq j_1, j_2 \leq p}$
for a given point $\z \in \mathcal{Z}$.

\subsection{Estimation of conditional Kendall's tau}
\label{sec:EstCKT}

For construction of nonparametric estimates of the conditional Kendall's tau, let us start by recalling the expression of the conditional Kendall's tau, following~\cite{derumigny2019kernel}:
\begin{align*}
    \tau_{1,2\mid \Z=\z}
    &= \mathbb{P}\left(\left(X_{1,1}-X_{1,2}\right)\left(X_{2,1}-X_{2,2}\right)>0\mid \Z_1 = \Z_2 = \z\right)
    -\mathbb{P}\left(\left(X_{1,1}-X_{1,2}\right)\left(X_{2,1}-X_{2,2}\right)<0\mid \Z_1 = \Z_2 = \z\right)
\end{align*}
Following the approach of \cite{derumigny2019kernel}, we introduce a kernel-based estimator of $\tau_{1,2\mid \Z=\z}$ as
\begin{align}
    \label{eq:def:est_ckt}
    &\widehat{\tau}_{1,2\mid \Z=\z}
    := \frac{1}{1-s_n}
    \sum_{i_1=1}^n \sum_{i_2=1}^n
    w_{i_1,n}(\z) w_{i_2,n}(\z)
    g( \X_{i_1,(1,2)} , \X_{i_2,(1,2)} ), \\
    &\text{ where } g(  \X_{i_1,(1,2)} , \X_{i_2,(1,2)} )
    := \operatorname{sign}
    \left(\left(X_{i_1, 1} - X_{i_2, 1}\right)
    \left(X_{i_1, 2}-X_{i_2, 2}\right)\right)
    %
    \nonumber
\end{align}
with Nadaraya-Watson weights $w_{i,n}$ given by 
\begin{equation}
    w_{i,n}(\z) := \frac{\Kc_h(\Z_i - \z)}{\sum_{k=1}^n \Kc_h (\Z_{k}-\z)},
    \label{eq:def:weights_w_in}
\end{equation}
and $s_n := \sum_{i=1}^n w_{i,n}^2(\z),$ for some kernel $\Kc$ on $\Rb^d$ and a bandwidth sequence $h=h(n)$ converging to zero as $n\to \infty$.
In this sense, $\widehat{\tau}_{1,2\mid \Z=\z}$ is a smoothed estimator of $\tau_{1,2\mid \Z=\z}
=  \E \left[ g(\X_1, \X_2 ) \mid \Z_1 = \Z_2 = \z \right]$.
The factor $1/(1-s_n)$ tends to $1$ as $n \to \infty$,
and ensures that the estimated conditional Kendall's tau takes values in the whole interval $[-1,1]$.

\medskip

We adapt the Simplified Structural Assumption~\ref{as:simp_structural} by assuming that the underlying structural pattern applies to the conditional Kendall's tau matrix given $\Z = \z$. 
%
\begin{assumption}[Simplified Structural Assumption conditionally to $\Z = \z \in \mathcal{Z}$]
    \label{as:structuralconditional} 
    Kendall's tau matrix of the random vector $\X$ can be written in the block form
    \begin{align*}
        \T_{\mid \Z=\z} = \begin{pmatrix}
        \cdot & \tau_{\mid \Z=\z} \1 \\
        \tau_{\mid \Z=\z} \1 & \cdot
        \end{pmatrix}
    \end{align*}
    for some value $\tau_{\mid \Z=\z} \in [-1, 1]$,
    where $\tau_{\mid \Z=\z} \1$ represent a block filled with the value $\tau_{\mid \Z=\z} \in [-1, 1]$ and the $\cdot$ represent any matrices of respective sizes $b_1 \times b_1$ and $b_2 \times b_2$ for some $b_1 \in \{1, \dots, p\}$ and $b_2 := p - b_1$.
\end{assumption}

In terms of stock return modeling, Assumption \ref{as:structuralconditional} has the following interpretation: conditionally on a given market state or portfolio movement, the stocks of companies from different sectors/countries have equal rank correlations with every other pair from the respective groups.
This could for instance be used for the computation of conditional risk measures.

\medskip

Note that Assumption~\ref{as:structuralconditional} only concern a fixed value of $\z$, and is quite general in the sense that it allows the existence of different block structures depending on the value of the conditional variable $\z$.

\medskip

Let us denote the naive (unaveraged) conditional Kendall's tau matrix estimator as $\widehat{\T}_{\mid \Z=\z} = \left[ \widehat{\tau}_{j_1,j_2 \mid \Z=\z} \right]_{1 \leq j_1, j_2 \leq p}$ with $\widehat{\tau}_{j_1,j_2 \mid \Z=\z}$ as in \eqref{eq:def:est_ckt}. 
As in the unconditional framework that was studied previously, we define averaged versions of the conditional estimators $\widehat{\tau}^{B}_{\mid \Z=\z}$, $\widehat{\tau}^{R}_{\mid \Z=\z}$,
$\widehat{\tau}^{D}_{\mid \Z=\z}$,
$\widehat{\tau}^{U}_{\mid \Z=\z}$,
that respectively average over the whole block, the first row, the first diagonal, and uniformly chosen entries of the off-diagonal block.

\subsection{Comparison of their asymptotic variances}
\label{sec:AsympRes}

Before proceeding with the asymptotic results, we need to formalize some regularity assumptions on the kernel $\Kc$, the covariate $\Z$ and the bandwidth sequence $h(n)$. 
Since we will give similar results as in \cite[Proposition 9]{derumigny2019kernel} where the case of the (bivariate) conditional Kendall's tau was treated, we give an adapted version of their assumptions.  

\begin{assumption}
\label{as:kernel}
\begin{itemize}
    \item[(a)] The kernel $\Kc$ is bounded, compactly supported, symmetrical in the sense that $\Kc(\mb{u})=\Kc(-\mb{u})$ for every $\mb{u} \in \Rb^{d}$ and satisfies $\int \Kc=1, \int|\Kc|<\infty, \int \Kc^{2}<\infty$. 
    \item[(b)] The kernel is of order $\alpha$ for some integer $\alpha>1$, i.e. for all $k=1, \ldots, \alpha-1$ and every indices $j_{1}, \ldots, j_{k}$ in $\{1, \ldots, d\}$, $$\int \Kc(\mb{u}) u_{j_{1}} \ldots u_{j_{k}} d \mb{u}=0.$$
    \item[(c)] In addition, $\mathbb{E}\left[\Kc_h(\Z-\z)\right]>0$ for every $h > 0$.
\end{itemize}
\end{assumption}

These assumptions are classical in nonparametric statistics to obtain convergence rates of kernel-based estimators. The assumption of compactness of the support means that the estimators taken at different points in $\mathcal{Z}$ are independent if the sample size is large enough, since the bandwidth $h_n$ tends to $0$. The assumptions that the kernel is bounded and that $\int|\Kc|$ and $\int \Kc^{2}$ are finite rule out too much irregular kernels that have fat tails or irregular behavior. 
Higher-order kernels allows to obtain estimators that converge faster, under the assumption below that the joint density $f_{\X, \Z}$ is smooth enough, see for example Section 1.2.1 of \cite{tsybakov2003introduction}.
Assumption~\ref{as:kernel}(c) ensures that the weights that appear in Equation~\eqref{eq:def:weights_w_in} are well-defined asymptotically since the denominator will converge to a strictly positive value by the law of large numbers.

\begin{assumption}
    \label{as:conditionalvariate}
    For every $\x \in \Rb^{p}, \z \mapsto f_{\X, \Z}(\x, \z)$ is continuous and almost everywhere differentiable on a neighborhood of $\z$ up to the order $\alpha$. For every $0 \leq k \leq \alpha$ and every $1 \leq j_{1}, \ldots, j_{\alpha} \leq d$, let
    $$
    \mathcal{H}_{k, \vec{\jmath}}\left(\mb{u}, \mb{v}, \x_{1}, \x_{2}, \z \right)
    := \sup_{t \in[0,1]} \left|
    \frac{\partial^{k} f_{\X, \Z}}{\partial z_{j_{1}} \ldots \partial z_{j_{k}}}\left(\x_{1}, \z+t h \mb{u}\right)
    \frac{\partial^{\alpha-k} f_{\X, \Z}}{\partial z_{j_{k+1}} \ldots \partial z_{j_{\alpha}}}\left(\x_{2}, \z+t h \mb{v}\right)
    \right|
    $$
    denoting $\vec{\jmath}=\left(j_{1}, \ldots, j_{\alpha}\right) .$ Assume that $\mathcal{H}_{k, \vec{\jmath}}\left(\mb{u}, \mb{v}, \x_{1}, \x_{2}, \z\right)$ is integrable and there exists a finite constant $C_{\mb{XZ}, \alpha}>$ 0 such that, for every $h<1$,
    $$
    \int|\Kc|(\mb{u})|\Kc|(\mb{v}) \sum_{k=0}^{\alpha}\binom{\alpha}{k} \sum_{j_{1}, \ldots, j_{\alpha}=1}^{d}
    \mathcal{H}_{k, \vec{\jmath}}\left(\mb{u}, \mb{v}, \x_{1}, \x_{2}, \z \right)
    \left|u_{j_{1}} \ldots u_{j_{k}} v_{j_{k+1}} \ldots v_{j_{\alpha}}\right|
    d \mb{u} d \mb{v} d \x_{1} d \x_{2}
    $$
    is less than $C_{\mb{XZ}, \alpha}$.
\end{assumption}

The regularity condition on $f_{\X, \Z}$ can be interpreted in the classical way: smoother functions are easier to estimate than very irregular ones. Therefore, densities $f_{\X, \Z}$ that are $\alpha$-times differentiable allows to use a larger range of bandwidths for large $\alpha$; this can be seen in the following Assumption~\ref{as:bandwidth}.

Note that $C_{\mb{XZ}, \alpha}
\leq C_{\alpha, p} \| K \|_\infty^2 \sup_{k \in \{0, \dots, \alpha \}}
\sup_{\vec{\jmath} \, \in \, \{1,\dots, d\}^k}
\| \partial^k f_{\X, \Z} / \partial z_{j_{1}} \ldots \partial z_{j_{k}} \|_\infty^2$, where $C_{\alpha, p}$ is a constant that only depends on $\alpha$ and $p$.
In this sense, the second part of Assumption~\ref{as:conditionalvariate} can be seen as a relaxed version of a uniform control on the higher-order derivatives of the density. Therefore, Assumption~\ref{as:conditionalvariate} is implied by Assumption~\ref{as:kernel} and by the (stronger) assumption that all partial derivatives of $f_{\X, \Z}$ with respect to components of $\z$ exist up to the order $\alpha$ and are bounded.

\begin{assumption}
\label{as:bandwidth}
$ n h_{n}^{d} \rightarrow \infty \text { and } n h_{n}^{d+2 \alpha} \rightarrow 0 \text{ as } n \to \infty.$
\end{assumption}

This assumption controls the rate at which the sequence $(h_n)$ tends to $0$. It should tends to $0$ fast enough but not too fast: the second condition controls the bias (see Equation~\eqref{eq:control_bias} in the proof) while the first condition ensures that the rate $n h_{n}^{d}$ is meaningful and allows us to verify Lyapunov's condition (second part of \cite[Section A.10]{derumigny2019kernel}).
We now present our main theoretical result on the joint asymptotic normality at different points of the conditioning variable $\Z$, at the nonparametric rate $(n h_{n}^{d})^{1/2}$.

\begin{theorem}
\label{th:varconditional}[Joint asymptotic normality at different points].
Let $n' > 0$, and let $\z_{1}^{\prime}, \ldots, \z_{n^{\prime}}^{\prime}$ be a collection of $n'$ points of $\mathcal{Z} \subset \mb{R}^{d}$ such that Assumptions \ref{as:structuralconditional}-\ref{as:bandwidth} are satisfied for any choice $\z \in \{ \z_{1}^{\prime}, \ldots, \z_{n^{\prime}}^{\prime} \}$.
Then, as $n \rightarrow \infty$,
\begin{align*}
    \left(n h_{n}^{d}\right)^{1 / 2}
    \left(\widehat{\tau}_{ \mid \Z=\z'_{j'}}-\tau_{k_1,k_2 \mid \Z=\z'_{j'}}\right)_{j'=1, \ldots, n^{\prime}}
    \stackrel{\textnormal{law}}{\longrightarrow} \mathcal{N}\left(0, \mb{H}_{k_1,k_2}\right),
\end{align*}
where the diagonal matrix $\mb{H}$ is given by
\begin{align*}
    \mb{H}
    = \left[ \frac{\int \Kc^{2} \mathds{1}_{\{j_1'=j_2'\}}}{
    f_{\mb{Z}}\left(\mb{z}_{j_1'}^{\prime}\right)} \times
    V \Big(\Pb_{\X | \mb{Z} = \mb{z}'_{j_1'}} \Big),
    \right]_{1 \leq j_1', j_2' \leq n'}
\end{align*}
and the asymptotic variance functions $V$ are respectively defined in Equations~\eqref{eq:def_asymptV1}-\eqref{eq:def_asymptVU}. 
\end{theorem}

The proof of this result is given in Appendix~\ref{proof:th:varconditional}.

\begin{remark}
Under the assumptions of Theorem~\ref{th:varconditional}, we have for a given $\z \in \mathcal{Z}$,
\begin{align*}
    \lim_{n \to +\infty} \left(n h_{n}^{d}\right)^{1/2}
    \big(\widehat{\tau}_{\mid \Z=\z}
    - \tau_{\mid \Z=\z} \big)
    \, \equalinlaw \,
    \frac{\int \Kc^{2}}{f_{\Z}(\z)} \times 
    \lim_{n \to +\infty} n^{1/2}
    \big(\widehat{\tau}(\Pb_{\X | \Z = \z})
    - \tau_{\mid \Z=\z} \big),
\end{align*}
where on the left-hand side $\widehat{\tau}_{\mid \Z=\z}$ denotes any of the estimators
$\widehat{\tau}_{j_1,j_2 \mid \Z=\z}$, 
$\widehat{\tau}^{B}_{\mid \Z=\z}$,
$\widehat{\tau}^{R}_{\mid \Z=\z}$,
$\widehat{\tau}^{D}_{\mid \Z=\z}$, 
$\widehat{\tau}^{U}_{k_1,k_2 \mid \Z=\z}$,
and on the right-hand side $\widehat{\tau}(\Pb_{\X | \Z = \z})$ denotes the similarly-averaged estimated Kendall's tau if we had observed a sample of size $n$ from the distribution $\Pb_{\X | \Z = \z}$.
\end{remark}


\begin{corollary}
Under the same assumptions as in Theorem \ref{th:varconditional} and by letting the sample size and dimensions tend to infinity, the following holds for $\Pb_\Z$-almost all $\z \in \mathcal{Z}$,
\begin{itemize}
    \item $\Var \left[ \widehat{\tau}_{j_1,j_2\mid \Z=\z } \right]
    \sim \dfrac{16 \int \Kc^{2}}{nh^d f_{\Z}(\z)}
    \left(Q_{j_1,j_2 \mid \Z=\z} - P_{\mid \Z=\z}^2\right),$
    
    \item $\Var \left[ \widehat{\tau}^{B}_{\mid \Z=\z } \right]
    \sim \dfrac{16 \int \Kc^{2}}{nh^d f_{\Z}(\z)}
    \left(Q_{B,0 \mid \Z=\z} - P_{\mid \Z=\z}^2\right),$
    
    \item$\Var \left[ \widehat{\tau}^{R}_{\mid \Z=\z } \right]
    \sim \dfrac{16 \int \Kc^{2}}{nh^d f_{\Z}(\z)}
    \left(Q_{R,1 \mid \Z=\z} - P_{\mid \Z=\z}^2\right),$
    
     \item $\Var \left[ \widehat{\tau}^{D}_{\mid \Z=\z } \right]
    \sim \dfrac{16 \int \Kc^{2}}{nh^d f_{\Z}(\z)}
    \left(Q_{D,0 \mid \Z=\z} - P_{\mid \Z=\z}^2\right),$
    
    \item $\Var \left[ \widehat{\tau}^{U}_{\mid \Z=\z } | \W \right]
    \sim \dfrac{16 \int \Kc^{2}}{nh^d f_{\Z}(\z)}
    \left(Q_{\W,0 \mid \Z=\z} - P_{\mid \Z=\z}^2\right),$
    
    \item $\Var \left[ \widehat{\tau}^{U}_{\mid \Z=\z } \right]
    \sim \dfrac{16 \int \Kc^{2}}{nh^d f_{\Z}(\z)}
    \left(Q_{B,0 \mid \Z=\z} - P_{\mid \Z=\z}^2\right),$
\end{itemize}
assuming respectively that
$Q_{j_1,j_2 \mid \Z=\z},
Q_{B,0 \mid \Z=\z},
Q_{R,1 \mid \Z=\z},
Q_{D,0 \mid \Z=\z},
Q_{\W,0 \mid \Z=\z},
Q_{B,0 \mid \Z=\z}$
are strictly larger than $P_{\mid \Z=\z}^2$,
where conditional versions of $P$, $Q$ and their averages are defined with the same expression as in the previous section, but applied to the conditional law $\Pb_{\X|\Z=\z}$.

If these assumptions are not met, the corresponding variances converges to $0$ at a rate faster than $O(1/(nh^d))$.
\end{corollary}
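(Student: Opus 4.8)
The plan is to obtain the five equivalents as an iterated limit: first send $n \to +\infty$ using the joint asymptotic normality of Theorem~\ref{th:varconditional}, and then send the block dimensions to infinity by reusing the exact dimensional computations of Corollary~\ref{corollary:uncond}, now carried out for the conditional law $\Pb_{\X \mid \Z = \z}$ rather than for $\Pb_\X$. In this sense the corollary is essentially a plug-in combination of two results already proved, the genuine work having been done beforehand.

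First I would read off, from the diagonal entries of the matrix $\mb{H}$ in Theorem~\ref{th:varconditional} (more precisely, from the leading-order variance computed en route to its proof), that for fixed block dimensions and as $n \to +\infty$,
\begin{equation*}
    \Var\big[\widehat{\tau}_{k_1,k_2 \mid \Z=\z}\big]
    \sim \frac{1}{n h_n^d} \cdot \frac{\int \Kc^2}{f_{\Z}(\z)} \,
    V\big(\Pb_{\X \mid \Z = \z}\big),
\end{equation*}
where $V$ is the relevant asymptotic-variance function among \eqref{eq:def_asymptV1}--\eqref{eq:def_asymptVU}, evaluated at the conditional distribution. This step requires upgrading the distributional convergence of Theorem~\ref{th:varconditional} to convergence of the rescaled second moment; since each $\widehat{\tau}_{k_1,k_2 \mid \Z=\z}$ is an average of terms valued in $[-1,1]$, the estimators are bounded, and the explicit form of the limiting variance $[\mb{H}]_{j',j'}$ then pins down the leading term of the variance.

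Next I would let $\lvert \mathcal{G}_{k_1} \rvert, \lvert \mathcal{G}_{k_2} \rvert, N_{k_1,k_2} \to +\infty$ inside the factor $V(\Pb_{\X \mid \Z = \z})$. The key observation is that the functions $V^B, V^R, V^D, V^U$ are exactly those whose dimensional limits were established in Corollary~\ref{corollary:uncond}; repeating that computation verbatim with $\Pb_\X$ replaced by $\Pb_{\X \mid \Z = \z}$ gives $V \to 16(Q_{j_1,j_2 \mid \Z=\z} - P^2_{k_1,k_2 \mid \Z=\z})$ for the naive estimator, $\to 16(U^B_{k_1,k_2 \mid \Z=\z} - P^2_{k_1,k_2 \mid \Z=\z})$ for the block and random estimators, $\to 16(S^R_{k_1,k_2 \mid \Z=\z} - P^2_{k_1,k_2 \mid \Z=\z})$ for the row estimator, and $\to 16(U^D_{k_1,k_2 \mid \Z=\z} - P^2_{k_1,k_2 \mid \Z=\z})$ for the diagonal estimator. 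For instance, in the block case one divides the bracket of $V^B$ by $\lvert \mathcal{B}_{k_1,k_2}\rvert = \lvert \mathcal{G}_{k_1}\rvert \lvert \mathcal{G}_{k_2}\rvert$ and checks that only the term carrying the factor $(\lvert \mathcal{G}_{k_1}\rvert - 1)(\lvert \mathcal{G}_{k_2}\rvert - 1)$ survives, the $Q^B$- and $S^B$-contributions being $O(1/\lvert \mathcal{G}\rvert)$. Substituting these limits into the displayed equivalence yields the stated $\frac{16 \int \Kc^2}{n h^d f_{\Z}(\z)}(\cdots)$ expressions. The final sentence follows at once: when the surviving constant (e.g. $U^B_{k_1,k_2 \mid \Z=\z} - P^2_{k_1,k_2 \mid \Z=\z}$) vanishes, the leading $1/(n h_n^d)$ term disappears and the remaining terms — carried by the $Q$- and $S$-pieces, which are $O(1/\lvert \mathcal{G}\rvert)$ smaller — decay strictly faster than $O(1/(n h^d))$.

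I expect the main obstacle to be the justification of the iterated limit together with the passage from convergence in law to convergence of variance: Theorem~\ref{th:varconditional} supplies only a distributional limit at fixed dimension, so one must argue that the $O(1/(n h_n^d))$ variance asymptotics holds with a leading constant that is continuous in the quantities $Q, S, U$, and that sending the dimensions to infinity afterwards is legitimate. Boundedness of the estimators controls the moment convergence in the first limit, while the exactness of the dimensional limits of the $V$ functions (already verified unconditionally) controls the second; the conditional analogues $Q_{\cdot \mid \Z=\z}$, $S_{\cdot \mid \Z=\z}$, $U_{\cdot \mid \Z=\z}$ enter only through the conditional copula, so no integral identity beyond those of Section~\ref{sec:condsetup} is needed.
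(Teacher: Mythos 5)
Your route is the one the paper itself (implicitly) takes: the paper gives no separate proof of this corollary, reading it off from Theorem~\ref{th:varconditional} — whose limiting variance is $\frac{\int \Kc^2}{f_\Z(\z)}\,V(\Pb_{\X\mid\Z=\z})$ with $V$ among \eqref{eq:def_asymptV1}--\eqref{eq:def_asymptVU} — combined with the dimensional limits of those same $V$ functions already established in Corollary~\ref{corollary:uncond}, evaluated at the conditional law. Your iterated-limit plan and the identification of which of $Q$, $S^R$, $U^B$, $U^D$ survives in each case match the paper exactly, as does the explanation of the degenerate case.

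One caveat on your justification of the first limit: boundedness of $\widehat{\tau}_{k_1,k_2\mid\Z=\z}$ in $[-1,1]$ gives $\lvert (nh^d)^{1/2}(\widehat{\tau}-\tau)\rvert \le 2(nh^d)^{1/2}$, which is unbounded, so it does \emph{not} by itself yield the uniform integrability of $nh^d(\widehat{\tau}-\tau)^2$ needed to pass from the convergence in law of Theorem~\ref{th:varconditional} to the stated variance equivalent. To make this step rigorous one should compute the second moment directly from the $U$-statistic representation $U_{n,j'}(g_{k_1,k_2})/U_{n,j'}(1)$ used in the proof of Theorem~\ref{th:varconditional} (a conditional analogue of the Hoeffding computation in Theorem~\ref{th:var}), or control a higher moment. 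The paper elides this point in exactly the same way, so this is a shared, fixable technicality rather than a defect specific to your argument.
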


As seen before, we remark that the asymptotic variances have analogous expressions to that of their unconditional counterparts. 
Therefore, all averaging estimators exhibit a lower asymptotic variance than the naive conditional Kendall's tau estimator. 
Also, the row averaging estimator intuitively performs worse than the block, diagonal and random estimators and for growing dimensions the block, diagonal and random  estimator perform (almost) equally in the limit, assuming that the averages of $U$ are not far apart.
Again, we can greatly reduce computation time by using either one of the diagonal or random averaging estimator instead of the block averaging estimator, since then only part of all conditional Kendall's taus have to be computed. 
Lastly, it again holds that if only part of the row or diagonal is averaged, the asymptotic variances of the resulting estimators do not change. By doing so we can further decrease computation time, but at the cost of attaining the limiting variances at slower rates.

\section{Simulation Study}
\label{sec:SS}

We perform a simulation study to assess the finite sample properties of our estimators.
First, in Section \ref{sec:simstuKT}, we compare the unconditional estimators by studying their variances and computation times for varying block and sample sizes.
In Section \ref{sec:simstuCKT}, we focus on the conditional versions of the diagonal and block estimators and we let the Kendall's taus depend on a one-dimensional covariate. 
Similarly, we compare their accuracy and computational efficiency for varying sample size and block dimensions.
In addition, we examine the estimators' optimal bandwidths under varying conditional dependencies of the Kendall's tau matrix. 
The simulations are all executed with the help of the statistical environment \texttt{R} \cite{Renv} on the DelftBlue supercomputer~\cite{DHPC2022}.
For simplicity, we choose $N = \min(b_1, b_2)$, so that diagonal, row and random estimators average over the same number of terms.

\subsection{Unconditional Kendall's Tau}
\label{sec:simstuKT}

In the unconditional framework, we compare the block, row, diagonal, random and naive Kendall's tau matrix estimators.
We will examine how the estimators' variance changes as a function of the block dimensions and the sample size. For this purpose, we consider mean squared errors (MSEs), which is a measure of variance here as all unconditional estimators are unbiased. 
Furthermore, we measure computation times for comparing the computational efficiency. For computing the pairwise sample Kendall's taus, we use the function \texttt{wdm} in the \texttt{R} package \texttt{wdm} \cite{wdm}, which can efficiently calculate sample and weighted Kendall's tau with time complexity~$O(n \log(n))$. 
The row, diagonal, random and block estimators are now available as part of the package \texttt{ElliptCopulas} \cite{ElliptCopulas}, and can be computed using the function \texttt{KTMatrixEst}.

\medskip

In each simulation, data is generated using a meta-elliptical copula~\cite{abdous2005dependence,derumigny2022identifiability,fang2002meta,genest2007metaelliptical}. A copula is said to be meta-elliptical if it is the copula of a distribution with density $ \lvert \bs{\Sigma} \rvert^{-1/2} g(\x^\top \Sigma \x)$ for a covariance matrix $\Sigma$ (here chosen to be a correlation matrix) and a function $g: \Rb_+ \to \Rb_+$, called the generator of the meta-elliptical copula.
This means that we simulate data from the Gaussian copula, or from other meta-elliptical copulas with different generators. 
Note that for meta-elliptical copulas, the matrix $\Sigma$ can be directly obtained from the Kendall's tau matrix.
We let the underlying Kendall's tau matrix be block-structured corresponding to two groups of equal size, which we will refer to as the block size. This results in two diagonal blocks and a single distinct off-diagonal block due to symmetry. 

\medskip

\noindent\textbf{Open problem 2.} 
Simulating from meta-elliptical copulas is easy, for example using the \texttt{ElliptCopulas} package \cite{ElliptCopulas} as they rely on elliptical distributions which are well-understood distributions. Constructing explicit non-meta-elliptical models of dependence that satisfy Assumption~\ref{as:simp_structural} seems difficult (unless both groups are independent) and is left for future research. Indeed, even in the low-dimensional case where $b_1 = b_2 = 2$
(so that the two blocks are $\{1, 2\}$ and $\{3, 4\}$), a D-vine (for example) would give a decomposition of the copula $c_{1,2,3,4} = c_{1,2} c_{2,3} c_{3,4} c_{1,3|2} c_{2,4|3} c_{1,4|2,3}$ of $(X_1, X_2, X_3, X_4)$.
So Kendall's tau $\tau_{2,3}$ can be easily chosen through the specification of the copula $c_{2,3}$. 
However, this would not give an explicit expression for the other interblocks Kendall's taus
$\tau_{1,3}, \tau_{1,4}, \tau_{2,4}$.
Indeed, they depend in a complicated way of all the copulas' and conditional copulas' parameters through integration.

\medskip

As obtained in Theorem \ref{th:var}, the variances depend on the averages of the auxiliary quantities $P,Q,$ and $S$ of pairs either along the row, the diagonal or over the entire block.
For a fair comparison of the different estimators, we need all different averages of these auxiliary quantities to be equal. As such, in addition to having identical Kendall's tau values in the off-diagonal block, we take the values within the diagonal blocks to be identical as well.
In that case, all auxiliary quantities are independent of the choice of pairs within the off-diagonal block, and moreover the Partial Exchangeability Assumption holds.

\medskip

For performance analysis, we will focus on estimates of the single off-diagonal block, as all estimators treat the diagonal blocks equally. As such, computation times and MSEs result from only estimating the single off-diagonal block.

\subsubsection{Effect of the sample size}
\label{subsubcec:uncondvarsamp}

In the first experiment, we study the dependency of the MSE on the sample size. To this end, the sample size is varied and the block size is fixed to $32 \times 32$.
The true Kendall's tau values are fixed in the following way:
the intragroup Kendall's tau are $\tau_{1,1} = \tau_{2,2} = 0.5$
and the intergroup Kendall's tau is $\tau_{1,2} = 0.3$.
We examine data generated from the Gaussian distribution.
For each estimator, the MSEs are calculated using 3000 replications. The results can be found on a log-log scale in Figure~\ref{fig:MSE_n}. 
\begin{figure}[hbt]
    \centering
    \includegraphics[width = \textwidth]{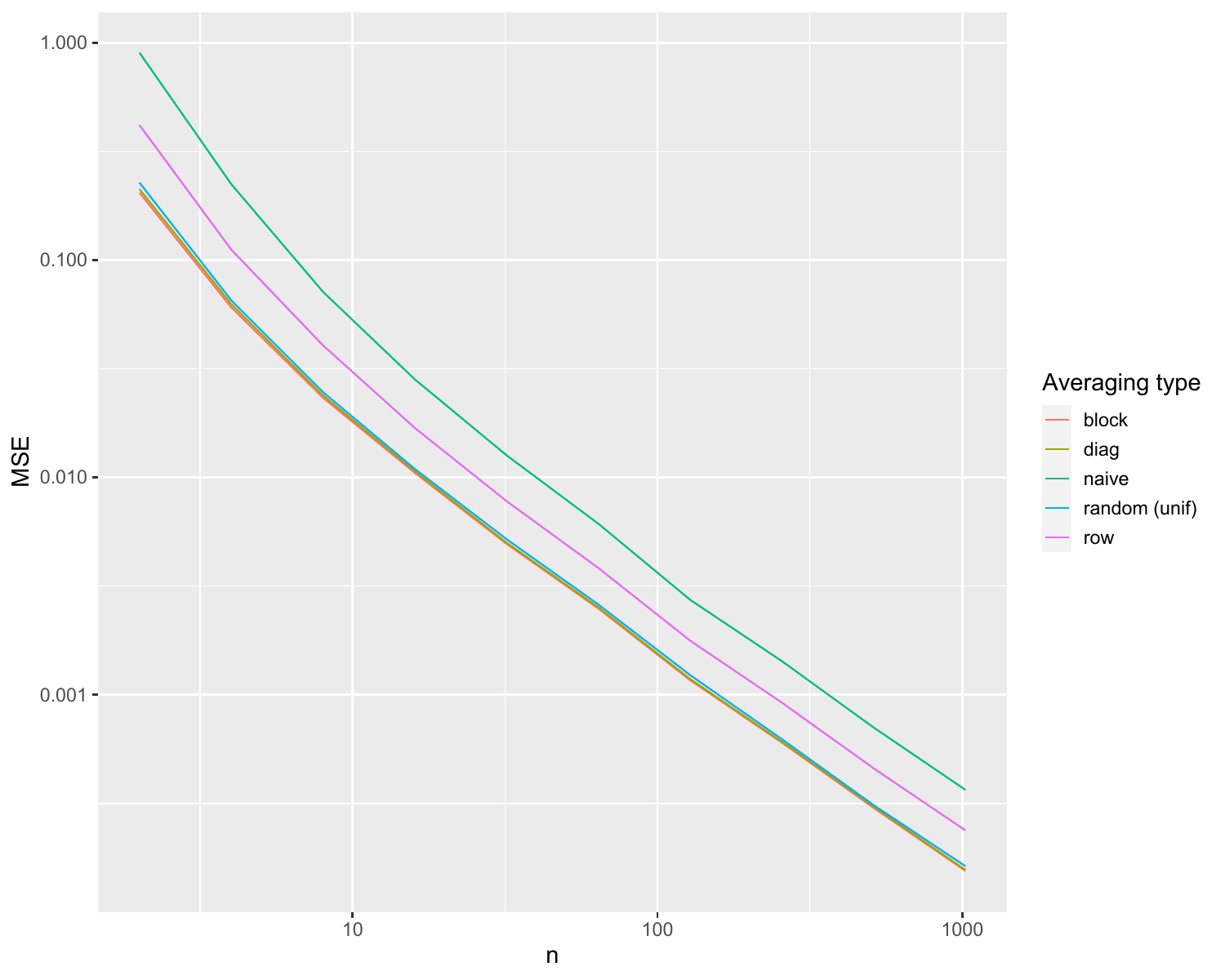}
    \caption{Log-log plots of
    the mean squared error of the estimators
    $\widehat{\tau}_{j_1,j_2}$ (``naive''), 
    $\widehat{\tau}^B$ (``block''), 
    $\widehat{\tau}^R$ (``row''),
    $\widehat{\tau}^D$ (``diag'')
    and 
    $\widehat{\tau}^U$ (``random'' with uniform selection of pairs)
    as a function of the sample size. The diagonal block Kendall's taus are set at $0.5$ and the off-diagonal block values at $0.3$. }
    \label{fig:MSE_n}
\end{figure}

In Figure~\ref{fig:MSE_n} we clearly observe almost straight lines for all of the estimators, with slopes indicating an inverse relationship. 
This not only confirms that the limiting variances are inversely proportional to the sample size, but also that this applies accurately for small sample sizes. 
In addition, we see that averaging the sample Kendall's taus does indeed lead to better estimates.
This applies to any given sample size, as all estimates depend equally on it.
As expected, the block estimator behaves best, only closely followed by the diagonal estimator. 

\medskip

Next, we study the dependency of the computation times on sample size. 
For this experiment, we compute the average computation time and the $5\%$ and $95\%$ quantile of the computation time for each experiment. The results are shown in Figure \ref{fig:meanRealtime_n} on a log-log scale.
It shows that the computation times gradually increase with the sample size, to a point where they appear to scale almost linearly with each other.
These observations are in line with the computation time $O(n\log (n))$ of the pairwise sample Kendall's tau estimator.  
As expected, the computation times of the block and sample Kendall's tau matrix estimator are very similar, as are the computation times of the row and diagonal estimators, with the latter two being significantly more efficient for any given sample size. 

\begin{figure}[hbt]
    \centering
    \includegraphics[width = \textwidth]{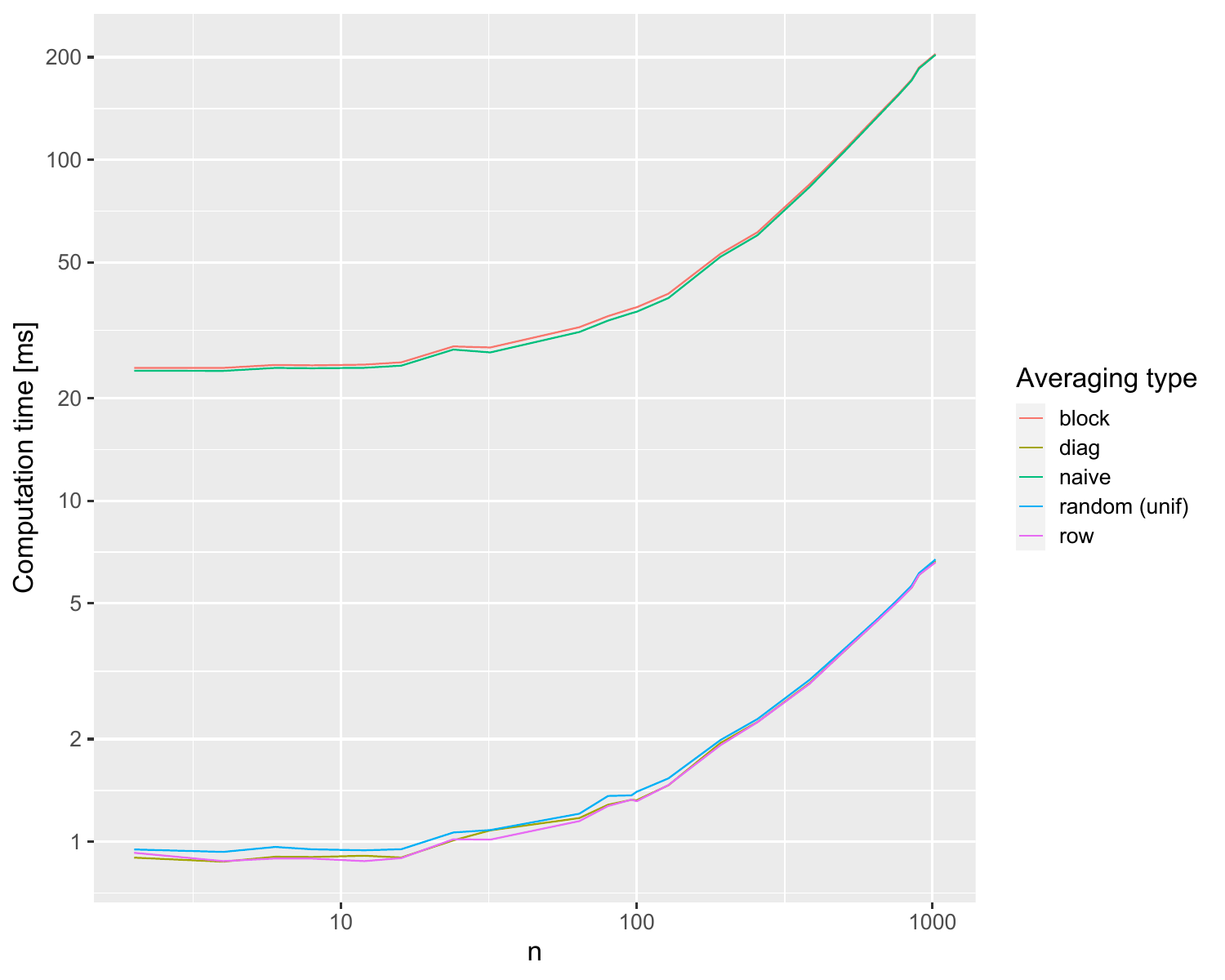}
    \caption{Log-log plot of 
    the mean computation time [ms] of the estimators
    $\widehat{\tau}_{j_1,j_2}$ (``naive''), 
    $\widehat{\tau}^B$ (``block''), 
    $\widehat{\tau}^R$ (``row''),
    $\widehat{\tau}^D$ (``diag'')
    and 
    $\widehat{\tau}^U$ (``random'' with uniform selection of pairs)
    as a function of the sample size, calculated using a block size of $32$.}
    \label{fig:meanRealtime_n}
\end{figure}

\subsubsection{Effect of the block size}
\label{subsubsec:ucondvarblock}

We first study the behavior of the MSE with respect to varying block sizes with off-diagonal block Kendall's taus of $0.3$ and diagonal block Kendall's taus of $0.5$.
In this experiment, we set the sample size to $4$ to reduce the computational cost of running a sufficient number of replications.
Again, we examine data generated from the Gaussian distributions. 
The MSEs are calculated using 3000 replications.
See Figure \ref{fig:MSE_vs_dim_unconditional} for a log-log plot of the MSEs as a function of the block size. 

\begin{figure}[hbt]
    \centering
    \includegraphics[width = \textwidth]{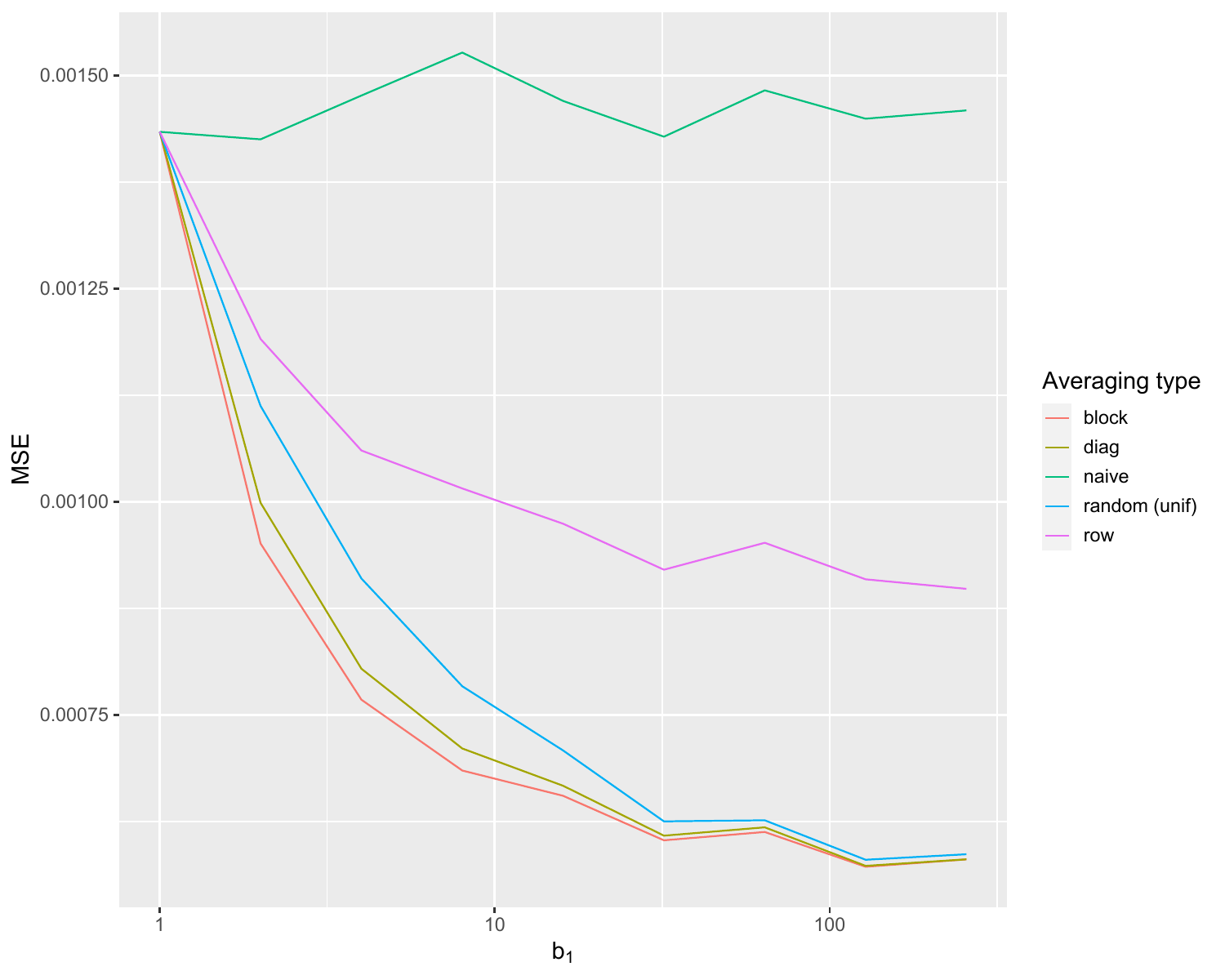}
    \caption{Log-log plots of 
    the mean squared error of the estimators
    $\widehat{\tau}_{j_1,j_2}$ (``naive''), 
    $\widehat{\tau}^B$ (``block''), 
    $\widehat{\tau}^R$ (``row''),
    $\widehat{\tau}^D$ (``diag'')
    and 
    $\widehat{\tau}^U$ (``random'' with uniform selection of pairs)
    as a function of the block size. The diagonal block Kendall's taus are set at $0.5$ and the off-diagonal block values at $0.3$.}
    \label{fig:MSE_vs_dim_unconditional}
\end{figure}

\medskip

Figure~\ref{fig:MSE_vs_dim_unconditional} shows that all of the averaging estimators perform increasingly better than the sample Kendall's tau estimator for growing block dimensions. 
For large block dimensions the MSEs seem to reach constant values, confirming that the asymptotic variances do not depend on block dimensions.
As expected, the block and diagonal averaging estimators both converge to the lowest limiting variance, approached fastest by the block averaging estimator. The row and the random averaging estimator performs considerably less.



Furthermore, we find that the relative difference between the diagonal and block estimator is largest for small dimensions, but they are still well within a factor of $1.5$ of each other. 
As the dimension increase, the MSEs of the diagonal estimator converge rapidly to that of the block estimator, again confirming that  the block and diagonal estimators have close variances for large block dimensions.
A more detailed presentation of the interplay between the sample size $n$ and the size $b_1$ and $b_2$ of the blocks is available in the Appendix, Figure~\ref{MSEnb1b2}.

\subsubsection{Effect of the value of the true Kendall's taus}

In this section, we fix the sample size $n = 16$ and the block sizes $b_1 = b_2 = 4$, and we change the value of Kendall's tau.
The MSE is displayed in Figure~\ref{fig:MSE_tau_reduced} for different combinations of $\tau_{1,1}, \tau_{1,2}$ and $\tau_{2,2}$. Note that some combinations are not present due to the constraints presented in Section~\ref{sec:consequences_corMatrix}.

\begin{figure}[hbt]
    \centering
    \includegraphics[width = \textwidth]{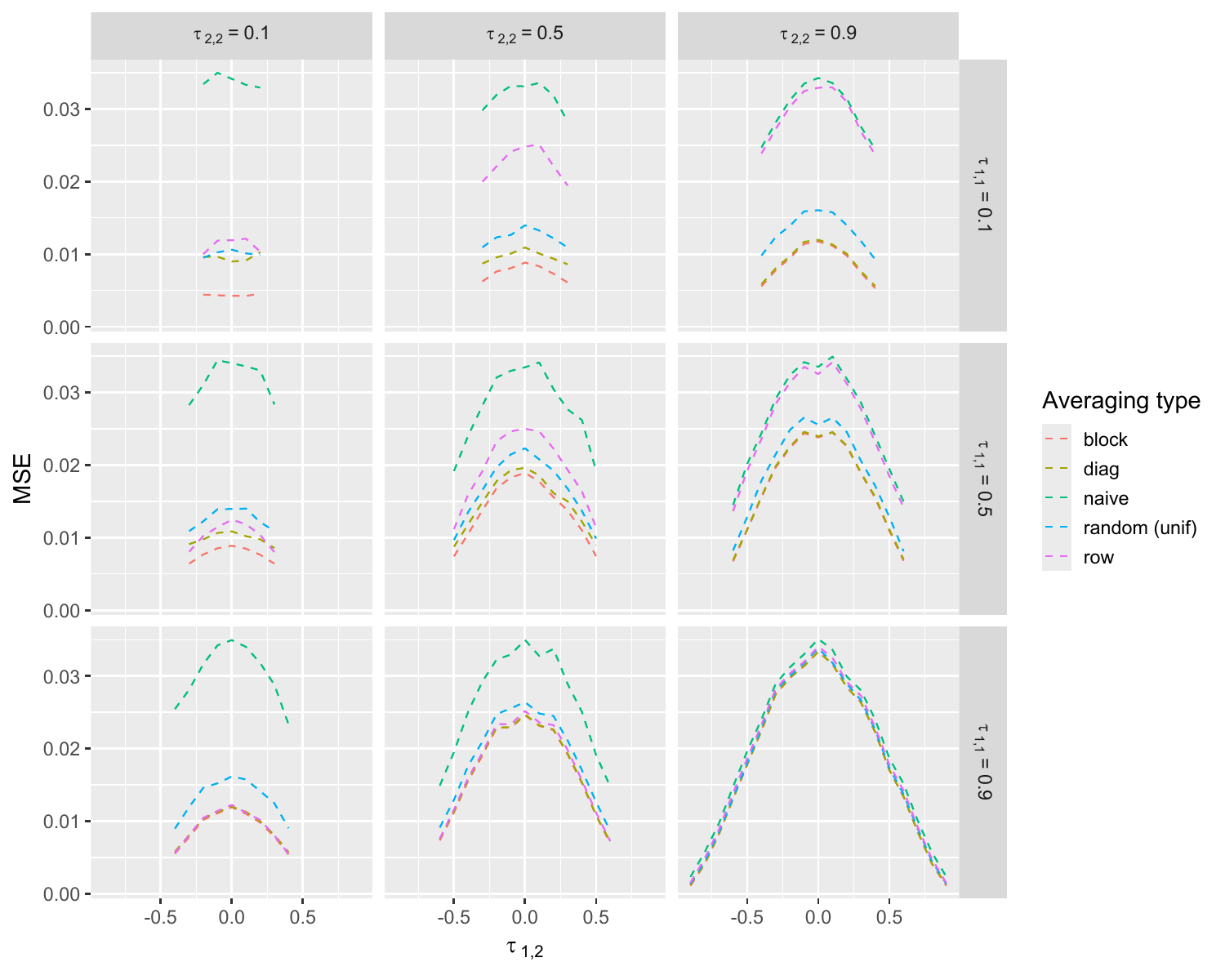}
    \caption{MSE as a function of the intergroup Kendall's tau $\tau_{1,2}$ for different combinations of $\tau_{1,1}$ and $\tau_{2,2}$. We use the sample size $n = 16$ and the block sizes $b_1 = b_2 = 4$.}
    \label{fig:MSE_tau_reduced}
\end{figure}

As expected, the relative order of the estimator is the same for all values of the Kendall's tau.
A more detailed version of this figure is available
in the Appendix, Figure~\ref{MSEtau},
showing the same phenomena for a larger range of value for the Kendall's taus.
Interestingly, the performance of all estimators is very similar when both intragroup Kendall's taus are equal to $0.9$.
Indeed, in this case the variables in each blocks are mostly the same, and then averaging does not change the situation anymore.

\subsubsection{Effect of the copula}

In this section, we fix the sample size, block sizes and Kendall's tau value.
We vary instead the copula of the distribution.
For this, we use different meta-elliptical copulas, because of their natural relationships between Kendall's tau and the underlying correlation matrix.
These meta-elliptical copulas are respectively defined as the copulas of the distributions with densities
$ \lvert \bs{\Sigma} \rvert^{-1/2} g(\x^\top \Sigma \x)$ for a covariance matrix $\Sigma$ (here chosen to be a correlation matrix) and a function $g: \Rb_+ \to \Rb_+$ respectively chosen as
\begin{itemize}
    \item $x/(1+x^3)$,
    \item $1/(1+x^2)$,
    \item $\exp(-x) \times |\cos(x)|$,
    \item $\exp(-x/2) + \exp(-x) \times |\cos(x)|$,
    \item $\exp(-x) + \exp(-x/3) \times \cos(x)^2$.
\end{itemize}
The results are displayed in Figure~\ref{fig:Var_gen}.
We can observe that the relative order of the estimators is mostly the same as before, with the averaging estimator the best and the no-averaging estimator having the highest mean square error.

\begin{figure}[hbt]
    \centering
    \includegraphics[width = \textwidth]{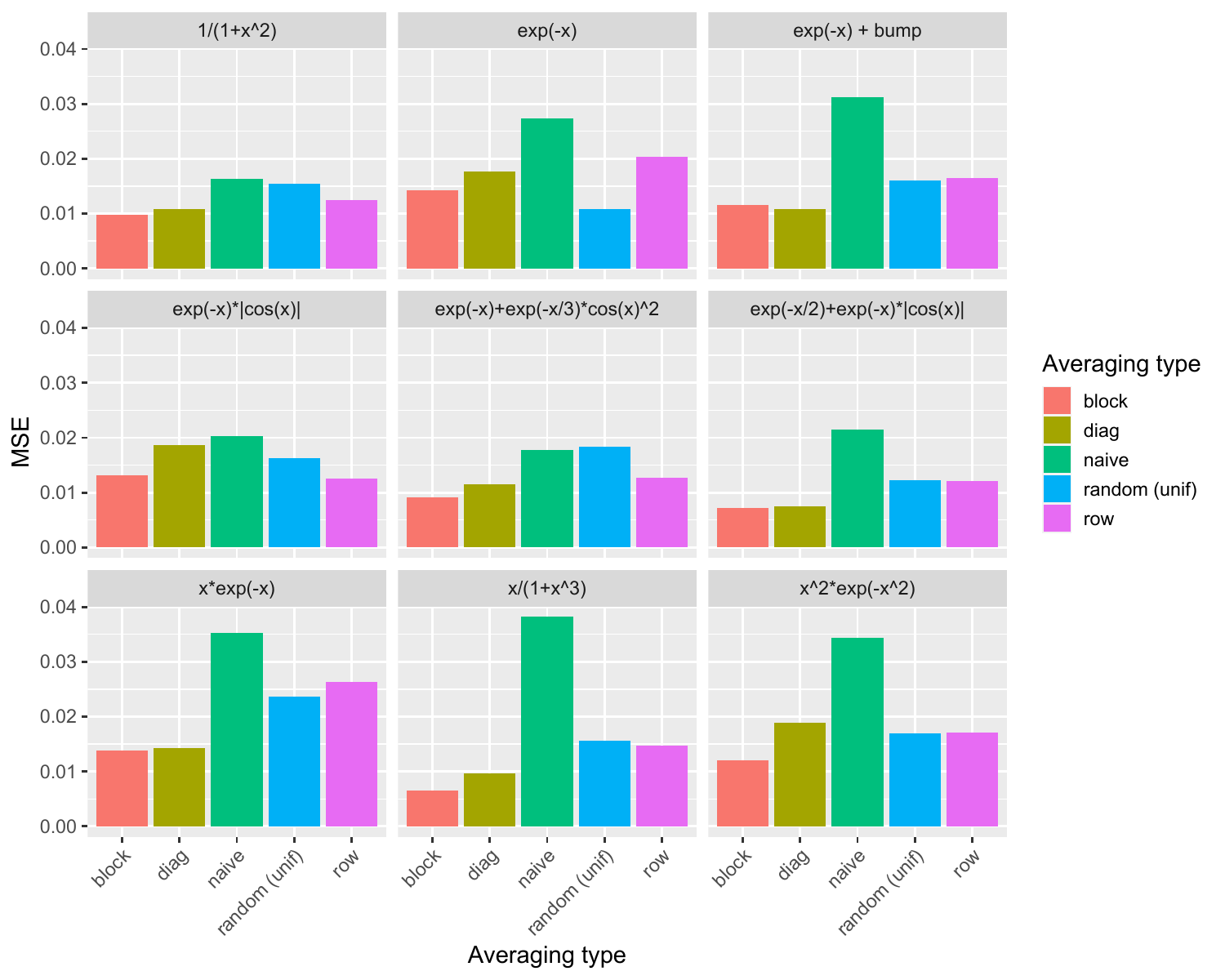}
    \caption{MSE for different meta-elliptical copulas and different estimators.}
    \label{fig:Var_gen}
\end{figure}

\subsection{Conditional Kendall's Tau}
\label{sec:simstuCKT}

In this section, we study the conditional versions of the block and diagonal estimators.
Since the estimators make use of kernel regression, a larger sample size is needed for obtaining stable results.
We therefore consider only a one-dimensional covariate $Z$, so that we do not need to increase the sample size even further and can run a sufficient number of replications. 
Kernel estimation is carried out with the Epanechnikov kernel~\cite{tsybakov2003introduction} and the estimation procedures are now available in the function \texttt{CKTmatrix.kernel} of the \texttt{R} package \texttt{CondCopulas}~\cite{CondCopulas}.

\medskip

In each of the experiments, we let the covariate $Z$ be uniformly distributed on the interval $[0,1]$.
We will estimate conditional Kendall's taus for points $z$ ranging from 0 to 1 in steps of 0.1. 
We generate data with the Gaussian distribution, as other distribution yield similar results.  
All variables will have a mean of $Z$ and variance of $1+Z^2$.
The Kendall's tau matrix is again block-structured corresponding to two groups of equal size.
Similarly to the unconditional case, we only focus on the estimates of the single off-diagonal block.
We set all Kendall's taus within the diagonal blocks to a constant value of $0.3$, which is independent of $Z$.
Finally, we let the Kendall's taus within the off-diagonal block depend on the covariate $Z$. 

\medskip

In Section \ref{subsubsec:condsamplesize}, we examine the accuracy and computational efficiency of the estimates under varying sample size.
A similar analysis for the effect of the block size is done in Appendix  \ref{subsubsec:condblocksize}.
To this end, we set Kendall's tau in the off-diagonal blocks to $0.1\,Z$. As $Z$ is distributed on $[0,1]$, the conditional Kendall's taus range from $0$ to $0.1$. As such, the underlying variables are again partially exchangeable conditionally to $Z=z$ for any $z \in [0,1]$.
It follows that the biases of the pairwise estimates in the off-diagonal block are all equal and thus that averaging over them does not change the total bias.
Since therefore all estimators have equal biases, we focus on the sample variances instead of the MSEs for a comparison of accuracies. 

\medskip

Then, in Section \ref{subsubsec:bandwidthselection} we study optimal bandwidths where we vary the way in which the off-diagonal block Kendall's taus depend on $Z$. 
We consider a model in which we let the off-diagonal block conditional Kendall's taus be given by
$$\left[\T_{\vert Z = z}\right]_{\mathcal{B}_{1,2}} = 0.1(\cos(0.5 \pi \omega z) + 1) \one , $$
with frequencies $\omega$ in $\{1, 2, 3, 4\}$. 
As such, these conditional Kendall's taus range from $0$ until $0.2$. 
For comparing the accuracies under varying bandwidths, we study mean integrated squared errors (MISE) computed by averaging the MSEs of conditional estimates computed at conditioning points ranging from $0$ to 1 in steps of 0.1.

\subsubsection{Effect of the sample size}
\label{subsubsec:condsamplesize}

In this experiment, we study the dependency of the variances on the sample size. To this end, we vary the sample size under a fixed block size of $4$ and a bandwidth of $0.5$. 
We use this relatively large bandwidth to ensure stable results even at lower sample sizes.
The integrated variance $\mathrm{IVAR} := \int_{z = 0}^1 \Var[\widehat\tau_{| Z = z}] dz$ is estimated as the average of sample variances for the grid points $z_i = i/n$, $i = 0, \dots, 10$, using 3000 replications.
Supplementary plots showing the influence of the sample size on each term $\Var[\widehat\tau_{| \, \Z = z_i}]$ are available in the Appendix~\ref{sec:additional_fig}, Figure~\ref{fig:MSE_vs_dim_conditional_wholegrid}.

\begin{figure}[hbt]
    \centering
    \includegraphics[width = \textwidth]{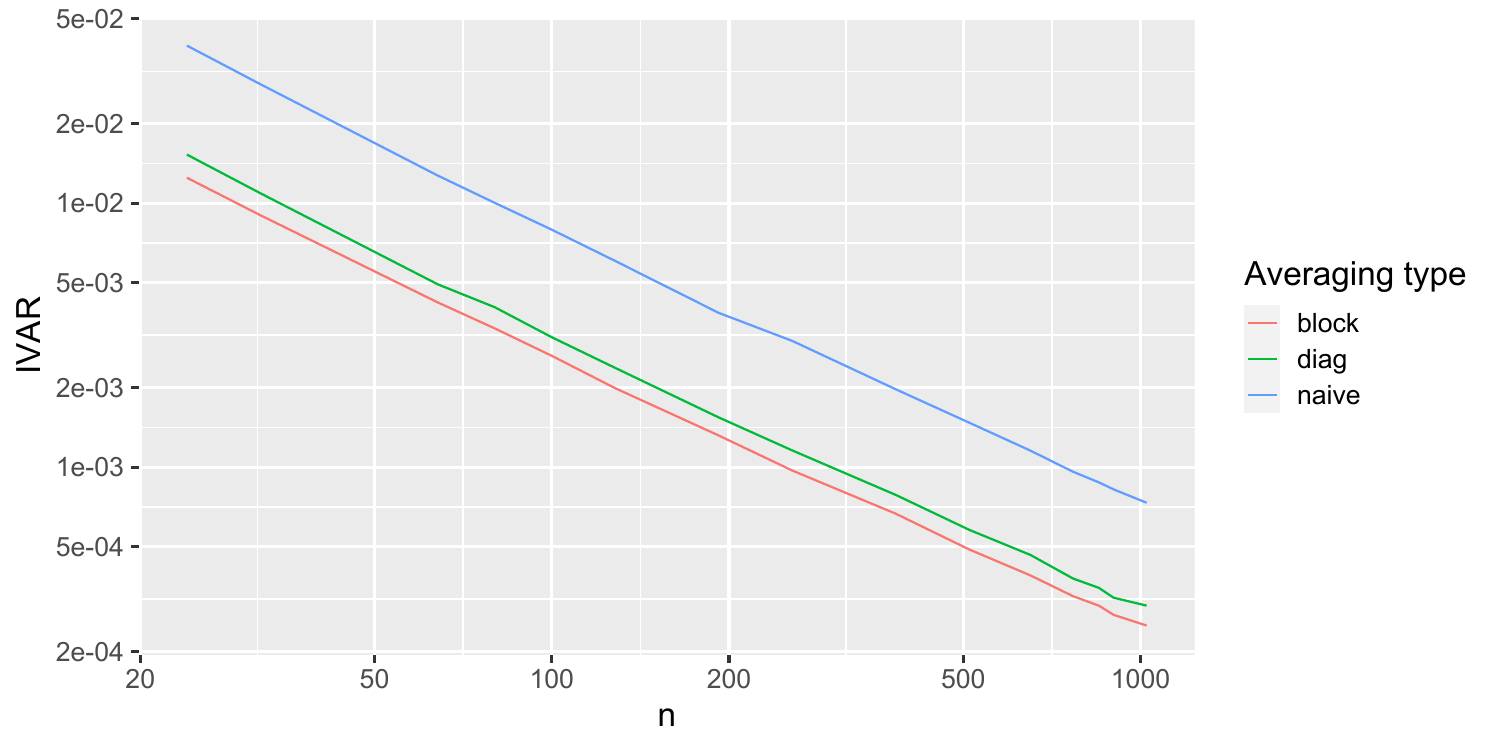}
    \caption{Log-log plots of the conditional estimators' integrated variance as a function of the sample size, using a block size of $4$ and a bandwidth of 0.5.}
    \label{fig:fig_IVAR_CKT_n}
\end{figure}

\medskip

Unsurprisingly, the conditional variances are also inversely related to the sample size. It follows that if bandwidths are kept constant, MSEs converge to the bias.
As such, appropriate bandwidths are naturally smaller for larger sample sizes. Furthermore, it is seen that the estimates near the edges of the interval $[0,1]$ are less accurate than those in the middle. 
This can be attributed to the fact that there are fewer observations of $Z$ near grid points close to the edges than near grid points in the middle, since the observations can be found there on both sides.
Evidently, a change in the distribution of $Z$ also changes the level of the variances.

\medskip

Next, let us study the dependency of the computation time on the sample size. We leave the setting unchanged, though the results correspond to the calculation of the conditional block estimates on a single grid point. 
The results are computed using 500 replications and are represented on log-log scale in Figure~\ref{fig:Comp_time_vs_timepoints_conditional}.
Here it is seen that the computation times gradually increase with the sample sample to a point where they appear to scale quadratically with each other.
This behavior follows from the fact that the conditional estimates require the calculation of a double sum of $n$ terms.
Note that the computation times of the diagonal and block estimators are relatively close since only a block size of $4$ is used here.

\begin{figure}[hbtp]
    \centering
    \includegraphics[width = \textwidth]{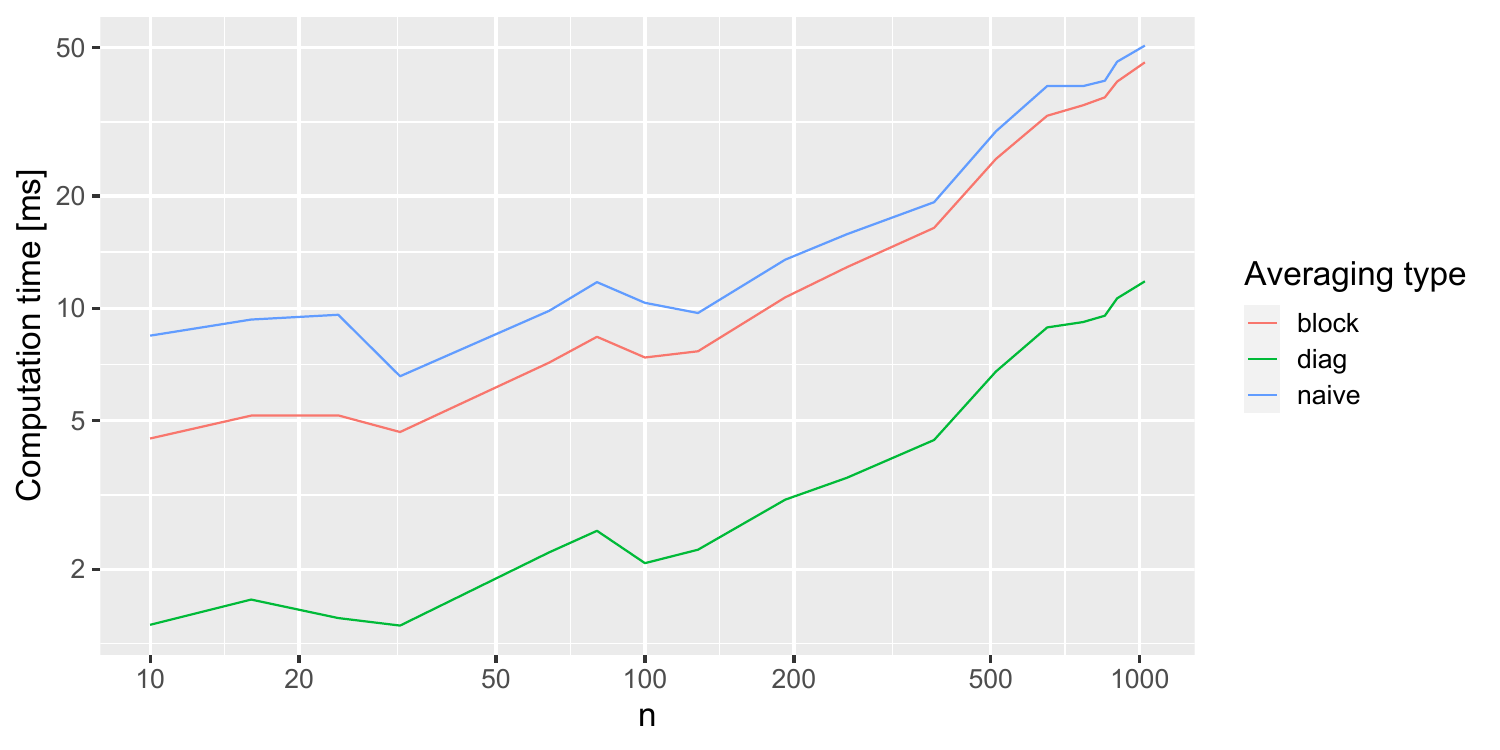}[]
    \caption{Log-log plot of the estimated mean computation time [ms] of the conditional estimator as a function of the sample size, for a block size of 4.}
    \label{fig:Comp_time_vs_timepoints_conditional}
\end{figure}

\subsubsection{Bandwidth selection}
\label{subsubsec:bandwidthselection}

\begin{figure}[hbt]
    \centering
    \includegraphics[scale=1]{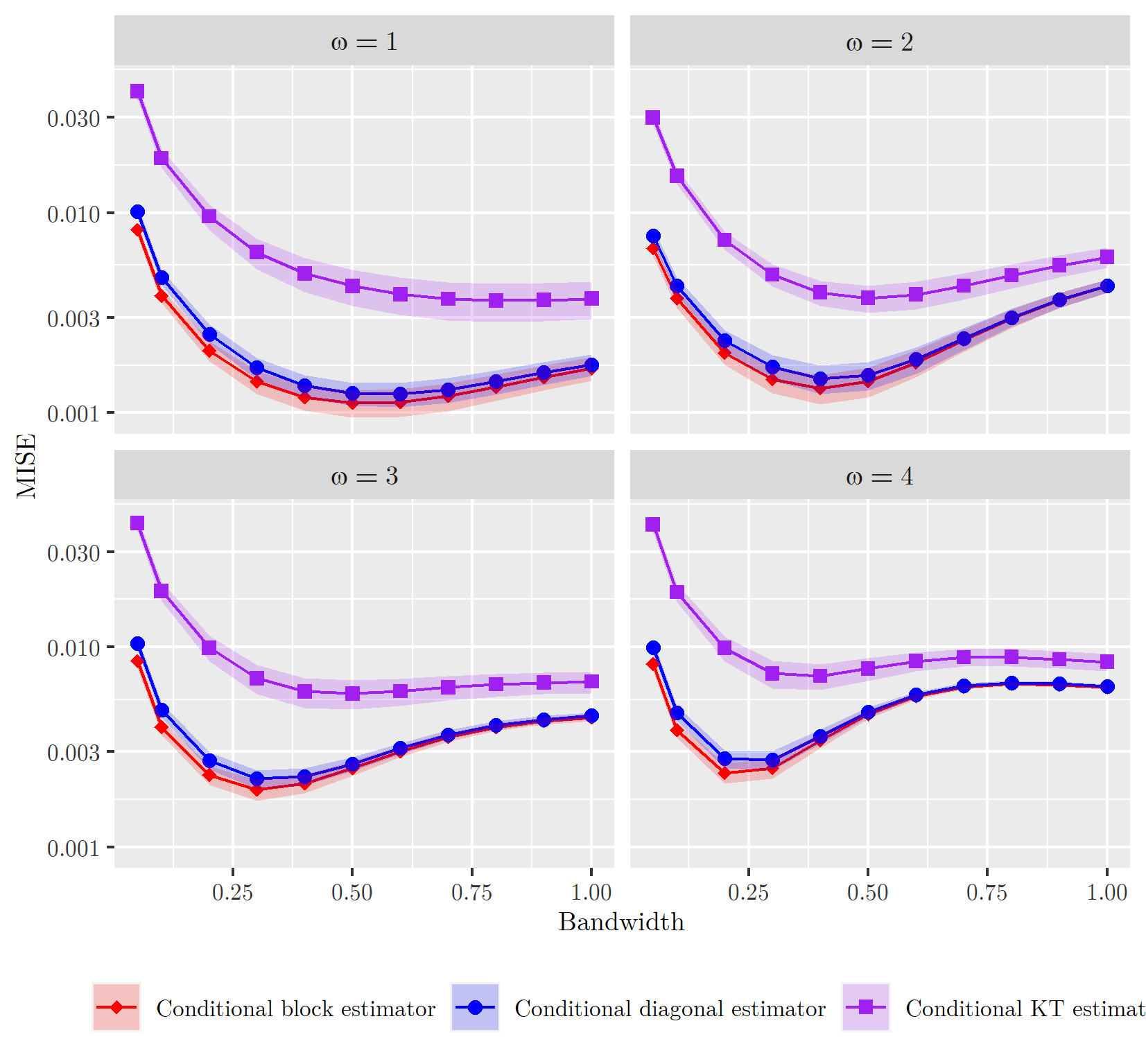}
    \caption{Log-plots of the conditional estimators' MISEs as a function of the bandwidth for different frequencies $\omega$ including 95\% confidence intervals, for a sample size of 200 and a block size of 8. ``Conditional KT estimate'' refers to the naive estimator of conditional Kendall's tau $\widehat{\tau}_{1,2\mid \Z=\z}$}.
    \label{fig:MISE_vs_h}
\end{figure} 

Let us compare the estimators' MISEs for different bandwidths. In this experiment, we set the diagonal block Kendall's taus to 0.3 and the off-diagonal block Kendall's taus conditionally at $Z = z$ to 
$$0.1(\cos(0.5 \pi \omega z) + 1),$$
with frequencies $\omega \in \{1,2,3,4\}$. The block size is fixed at $8$ and the sample size at $200$. The MISEs and 95\% confidence intervals are estimated using 100 replications, see Figure \ref{fig:MISE_vs_h}.  

\medskip

The figure confirms that indeed the averaging estimators have smaller optimal bandwidths than the naive estimator.
It should be noted that only a block size of $8$ is used here, and that the optimal bandwidth decreases with block size until the limit values are reached.
Furthermore, the figure shows that as the frequency increases, the optimal bandwidth is reduced. 
This is fully consistent with kernel regression theory: increasing the frequency increases the difference in Kendall's tau values conditionally on adjacent points of $z$, and therefore we need to pick a smaller bandwidth.
Lastly, it should be noted that as the bandwidth increases the effect of averaging is less and less visible. 
This can be attributed to the fact that by increasing the bandwidth, the variance term within the MISE becomes less and less prominent, while the bias term generally increases.

\section{Application to Real Data}
\label{sec:RD}

In this section, we study the behavior of the estimators under real data conditions and provide value at risk (VaR) computations of a large stock portfolio as an example of possible applications.
In Section~\ref{subsec:estpro}, we describe the methods used to estimate the VaR input parameters. 
The results are presented in Section~\ref{subsec:results}, where backtesting is applied to asses the viability.
All computations have been done using the \texttt{R} statistical environment \cite{Renv}.

\subsection{Value at Risk for Elliptical Distributions}
\label{subsec:VaRtheory}

The Value at Risk (VaR) is a widely used risk measure in a variety of financial fields, ranging from auditing and financial reporting to risk management and the calculation of regulatory capital \cite{VaRintro}. 
It is used to quantify potential losses over a specific time frame of some financial entity or portfolio of assets.
We will follow the approach of \cite{pimenova2012semi,VaRell}, in which explicit expressions for the VaR of elliptical distributions was derived. For the reader's convenience, we recall these expressions in the present section.

\medskip

Let $X$ be the loss of a given portfolio, i.e. $X > 0$ means that the portfolio manager is loosing $X$ euros.
The VaR at level $\alpha \in (0,1)$ is defined as the quantile of $X$ at level $(1 - \alpha)$.
To estimate the VaR of a given portfolio of assets, it is often assumed that the portfolio's profits and losses are a linear function of the returns of the individual constituents.
More formally, a portfolio with value $\Pi(t)$ at time $t$ is called linear if its profit and loss $\Delta\Pi(t) = \Pi(t)-\Pi(0)$ over a time window $[0, \mathrm{t}]$ is a linear function of the returns $X_{1}(t), \ldots, X_{p}(t)$:
$$\Delta \Pi(t)=\delta_{1} X_{1}(t)+\delta_{2} X_{2}(t)+\ldots+\delta_{p} X_{p}(t).$$ 
This clearly applies to any common stock portfolio by using the ordinary returns of the individual shares and when considering the log returns, this holds to a good approximation provided that the time window $[0,t]$ is small, e.g. for daily log returns.
The time window $t$ will be kept constant and will therefore be omitted from future notations.

\medskip

Furthermore, we will assume that the $X_j$ are elliptically distributed with mean $\bs{\mu}$, covariance matrix $\bs{\Sigma}$ with Cholesky decomposition $\bs{\Sigma} = \mb{A}\mb{A}^T$ and density generator $g$.
Thus, the probability density function $f_{\X}$ of
$\X = (X_1, \ldots, X_p)$ is given by
$$f_{\X}(\x) = \lvert \bs{\Sigma} \rvert^{-1/2} g\left( (\x -\bs{\mu})^T \bs{\Sigma}^{-1} (\x -\bs{\mu}) \right). $$
When considering elliptically distributed risk factors, we cannot simply use the Delta-Normal approach to calculate the VaR, as it relies on the stronger assumption of normality. 
A generalization of the Delta-Normal method was derived for the class of elliptical distributions in \cite{VaRell}.

\medskip

Let us start by noting that the VaR of the portfolio profits and losses $\Delta \Pi(t)$ can be rewritten as
$\mathbb{P}\left( \Delta \Pi < -\operatorname{VaR}_{\alpha} \right) = \alpha.$
Then, given the linearity of the portfolio and the fact that $\X$ follows an elliptical distribution, the VaR is obtained by solving the following equation:
\begin{equation*}
    \alpha=|\bs{\Sigma}|^{-1 / 2} \int_{\left\{\bs{\delta}  \x^T \leq-\operatorname{VaR}_{\alpha}\right\}} g\left((\x-\bs{\mu}) \bs{\Sigma}^{-1}(\x-\bs{\mu})^{T}\right) \mathrm{d} \x,
\end{equation*}
where $\bs{\delta}$ denotes the vector of weights $(\delta_1, \ldots, \delta_p)$.
After several changes of variables, we obtain
\begin{equation}
    \alpha = \left| S_{p-2} \right| \int_{0}^{\infty} r^{p-2} \int_{-\infty}^{\frac{-\bs{\delta}\bs{\mu}^T-\operatorname{VaR}_{\alpha}}{|\bs{\delta}\mb{A}| } } g\left(z_1^2 + r^2\right)\mathrm{d} z_1 \mathrm{d} r, \label{eq:trEq1}
\end{equation}
where $\left| S_{p-2} \right| := 2\pi^\frac{p-1}{2} / \Gamma(\frac{p-1}{2}).$
Let us now introduce the function
\begin{align}
\label{eq:trEq}
    G(s)
    &= \frac{2\pi^\frac{p-1}{2}}{\Gamma(\frac{p-1}{2})} \int_{-\infty}^{-s} \int_0^\infty r^{n-2} g(z_1^2 + r^2) \mathrm{d}r \mathrm{d}z_1
    = \frac{\pi^\frac{p-1}{2}}{\Gamma(\frac{p-1}{2})} \int_{s}^{\infty} \int_{z}^\infty (u-z^2)^{\frac{p-3}{2}} g(u) \mathrm{d}u \mathrm{d}z,
\end{align}
where we have changed variables to $u = r^2 + z_1^2$ and $z = - z_1$. 
Let us denote by $q_{\alpha,p}^g$ the unique solution of the transcendental equation
\begin{equation}
    \alpha = G(q_{\alpha,p}^g).
\end{equation}
It then finally follows from expressions \eqref{eq:trEq1} and \eqref{eq:trEq} that the Delta-Elliptic VaR is given by
\begin{align}
    \operatorname{VaR}_{\alpha}
    &= -\bs{\delta}\bs{\mu}^T + q_{\alpha,p}^g |\bs{\delta} \mb{A}| 
    = -\bs{\delta}\bs{\mu}^T + q_{\alpha,p}^g \sqrt{\bs{\delta} \bs{\Sigma} \bs{\delta}^T}. \label{eq:VaRcalc}
\end{align}
Note that this equation has a clear financial interpretation: the portfolio's average return is given by $\bs{\delta}\bs{\mu}^T$ and the portfolio's standard deviation by $ \sqrt{\bs{\delta} \bs{\Sigma} \bs{\delta}^T} $. Further note that the result is analogous to that of the Delta-Normal VaR, in which we simply replace $q_{\alpha,p}^g$ with the $1-\alpha$ quantile of the standard-normal distribution.

\subsection{Estimation Procedure}
\label{subsec:estpro}

In order to test the estimators in real data conditions, we consider a portfolio consisting of $240$ different stocks. All stocks are listed on the Euronext markets and data has been downloaded from Yahoo Finance. The complete list of all shares involved is available in Appendix~\ref{sec:stocksLists}.
We will estimate the portfolio's daily VaR assuming that the price is set at a level of 100 and that all stocks in the portfolio are equally weighted.
To this end, we model the daily log returns of the individual stocks, assuming they follow an elliptical distribution.

\medskip

In order to achieve a proper clustering, we compute the pairwise Kendall's tau matrix over a long time period from 01 January 2007 to 14 January 2022, after which we reorder the variables in order to obtain the intended block structure.
Since we have not proposed a clustering method, we simply use the method \texttt{GW\_Ward} method from package \texttt{seriation} \cite{seriation}, along with a few manual adjustments. 
The resulting reordering corresponds to four large groups, which are specified further in Appendix~\ref{sec:stocksLists}. 
See Figure~\ref{fig:clustering} for a heatmap of the pairwise Kendall's tau matrix before and after reordering the variables by group. 
To indicate the groups, lines have been drawn around the diagonal blocks.
It should be noted that, if studied carefully, the large groups can be broken down into smaller and more accurate groups.
Nevertheless, these large groups already seem to be quite useful and therefore we will simply use them for our further analysis.

\medskip

Based on the groups displayed in Figure~\ref{fig:clustering}, the objective is to compute the VaR at 30 June 2017, leaving sufficient future data for backtesting the results. 
To this end, we estimate the Kendall's tau matrix of the log returns using the block, row, diagonal and sample Kendall's tau matrix estimators using data points over the period 01 August 2015 to 30 June 2017. To estimate the standard deviations and averages over the same period, we use the sample mean and sample standard deviation.

\medskip 

Following the elliptical assumption, we can now obtain covariance matrix estimates from each of the Kendall's tau matrix estimates.
Subsequently, we can compute nonparametric estimates of the density generator for each of these inputs.
To this end, we make use of the function \texttt{EllDistrEst} from the \texttt{ElliptCopulas} package \cite{ElliptCopulas} which implements Liebscher's procedure \cite{liebscher}.

\medskip 

For the density generator estimation we require a complete data set with no missing values. 
As such, the interval on which we estimate the density generator will be chosen as shorter (01 June 2016 to 30 June 2017). 
The kernel function will be chosen as the Epanechnikov kernel.
Choice of tuning parameters in this setting is discussed in \cite{ryan2024choice}.
For simplicity, we use Silverman’s rule of thumb for bandwidth selection to estimate elliptical density generators~\cite{pimenova2012semi}, which for a sample size of $n$ is given by 
\begin{equation}
    h = 1.06\sqrt{ \operatorname{Var}\left[ \widehat{\bs{\xi}} \right] n^{1/5}},
\end{equation}
where 
$$\widehat{\xi}_i = -1 + \left(1+((\x_i-\bs{\mu}) \widehat{\bs{\Sigma}}^{-1} (\x_i-\bs{\mu})   )^{p / 2}\right)^{2 / p},$$ 
for $i = 1, \ldots, n$ and $p = 240$.
Here, $\x_i$ stands for the vector of log returns at the $i$th date, $\widehat{\bs{\Sigma}}$ stands for one of the covariance matrix estimates and $\bs{\mu}$ stands for the log returns' sample mean. 
Clearly, by using this bandwidth selection method, the use of different Kendall's tau matrix estimators yields different values for the bandwidth. 
In order to get a better idea of the effects of the bandwidth choice, we also consider several deterministic bandwidths, and compare the performance of the estimators for each of them.  

\medskip

Finally, we can numerically solve the transcendental equation as given in \eqref{eq:trEq1} to arrive at the corresponding quantiles. 
As such, we have discussed all ingredients for calculating the VaR as in \eqref{eq:VaRcalc}.
In order to test the results, we perform backtesting on two intervals, one in the future from 01 July 2017 to 14 January 2022 and one during the period on which the estimations are based, from 01 August 2015 to 30 June 2017.

\subsection{Results}
\label{subsec:results}

We compute the portfolio's 5\% and 10\% VaR values by following the estimation procedure described in Section \ref{subsec:estpro}.  
Table \ref{tab:quant} shows the quantile estimates obtained by solving the transcendental equation for each of the different density generator estimates. 
The density generators were estimated using each of the block, row, diagonal and naive Kendall's tau matrix estimators and using varying values of the bandwidth.

\medskip

The table shows that the averaging estimators yield very similar quantiles which are all relatively constant for different choices of the bandwidth. 
In contrast, the quantiles of the naive estimator lie substantially higher and vary significantly for the different bandwidths. 
In that sense, the estimates obtained with the averaging estimators seem to be much more stable.
Moreover, the Silverman's bandwidths of the averaging estimators are also all very similar, while that of the naive estimator is again considerably larger.

\begin{table}[hbtp]
  \begin{center}
 \caption{Estimated quantiles corresponding to the 5\% and 10\% VaRs calculated by estimating an elliptical distribution for the daily log returns using each of the different Kendall's tau matrix estimates and several values of the bandwidth.}
  \label{tab:quant}
\noindent \begin{tabular}{ll|cccc}  \toprule \toprule
     \multicolumn{2}{l|}{\textbf{Quantiles} $\bs{q_{\alpha, p}^{\widehat{g}_h}}$} & \multicolumn{4}{l}{Estimated}  \\
    $\alpha$ & Estimator & {$h = 20$} & {$h = 40$} & {$h = 100$} & {Silverman's $h$}   \\ \midrule
    \multirow{4}*{5\%}  & Naive & 2.11 & 1.94 & 1.98 & 2.12 ($h=586.8$)   \\
      & Block  & 1.60 & 1.60  & 1.60 & 1.60 ($h = 40.8$)   \\
      & Row  & 1.60 & 1.60  & 1.60 & 1.60 ($h = 41.1$)    \\
      & Diagonal & 1.59 & 1.59  & 1.60 & 1.59 ($h = 40.5$)         \\ \midrule
    \multirow{4}*{10\%}  & Naive   & 1.48 & 1.38   & 1.40 & 1.53 ($h=586.8$)     \\
     & Block  & 1.23 & 1.23  & 1.23 & 1.23 ($h = 40.8$)     \\
     & Row  & 1.24 & 1.24  & 1.23 & 1.24 ($h = 41.1$)      \\
     & Diagonal  & 1.23 & 1.23  & 1.23 & 1.23 ($h = 40.5$)        \\  \bottomrule \bottomrule
\end{tabular}
\end{center}
\end{table}

\medskip

Table \ref{tab:VaR} shows the VaR estimates for each of the different estimators and bandwidths, and also the backtested VaR values. As discussed in Section \ref{subsec:estpro}, backtests were conducted at two intervals, interval 1 refers to the upcoming interval from 01 July 2017 until 14 January 2022, and interval 2 refers to the interval on which the estimation is based, from 01 August 2015 until 30 June 2017.

\begin{table}[hbtp]
  \begin{center}
\caption{Estimated 5\% and 10\% VaRs including the corresponding backtesting results on two intervals. Interval 1 corresponds to 01 July 2017 until 14 January 2022 and interval 2 to 01 August 2015 until 30 June 2017.}
     \label{tab:VaR}
 \begin{tabular}{ll|cccc|cc} \toprule \toprule
     \multicolumn{2}{l|}{$\bs{\operatorname{VaR}}$} & \multicolumn{4}{l|}{Estimated} & \multicolumn{2}{l}{Backtested} \\
    $\alpha$ & Estimator & {$h = 20$} & {$h = 40$} & {$h = 100$} & {Silverman's $h$} & {Interval 1} & {Interval 2} \\ \midrule
    \multirow{4}*{5\%}  & Naive & 1.647 & 1.512 & 1.544 & 1.655 ($h=586.8$) & \multirow{4}*{1.392} & \multirow{4}*{1.262} \\
      & Block  & 1.320 & 1.320  & 1.320 & 1.320 ($h=40.8$)\\
      & Row  & 1.332 & 1.332 & 1.332 & 1.332 ($h=41.1$)\\
      & Diagonal & 1.284 & 1.284 & 1.292 & 1.284 ($h=40.5$)  \\ \midrule 
   \multirow{4}*{10\%}  & Naive   & 1.147 & 1.083 & 1.068 & 1.187 ($h=586.8$)  & \multirow{4}*{0.861} & \multirow{4}*{0.839} \\
     & Block  & 1.008 & 1.008 & 1.008 & 1.008 ($h=40.8$)\\
     & Row  & 1.026 & 1.017  & 1.026 & 1.026 ($h=41.1$)\\
     & Diagonal  & 0.987 & 0.987 & 0.987 & 0.987 ($h=40.5$)\\ \bottomrule \bottomrule 
\end{tabular}
\end{center}
\end{table}

\begin{table}[htbp]
  \begin{center}
  \caption{The number of exceedances of the estimated 5\% and 10\% VaRs during backtesting interval 1, from 01 July 2017 until 14 January 2022.}
  \label{tab:exc1}
 \begin{tabular}{ll|cccc|c} \toprule \toprule
     \multicolumn{2}{l|}{\textbf{\# Exceedances}  } & \multicolumn{4}{l|}{Estimated} & \multicolumn{1}{l}{Backtested} \\
    $\alpha$ & Estimator & {$h = 20$} & {$h = 40$} & {$h = 100$} & {Silverman's $h$} & {Interval 1}  \\ \midrule
    \multirow{4}*{5\%}  & Naive & 47  & 53 & 53 &  46 ($h=586.8$) & \multirow{4}*{58}   \\
      & Block  & 58 & 58 & 58 & 58 ($h=40.8$)  \\
      & Row  & 58 & 58 & 58 & 58 ($h=41.1$) \\
      & Diagonal   & 61 & 61 & 59 & 61 ($h=40.5$) \\ \midrule 
    \multirow{4}*{10\%}  & Naive   & 76 & 85  & 83 & 72 ($h=586.8$) & \multirow{4}*{116} \\
     & Block  & 94 &  94 & 94 & 94 ($h=40.8$) \\
     & Row  & 91 & 91 & 92 & 91 ($h=41.1$)\\
     & Diagonal  & 100 & 100 & 100 & 100 ($h=40.5$)       \\  \bottomrule \bottomrule
\end{tabular}
\end{center}
\end{table}

\begin{table}[htbp]
  \begin{center}
  \caption{The number of exceedances of the estimated 5\% and 10\% VaRs during backtesting interval 2, from 01 August 2015 until 30 June 2017.}
  \label{tab:exc2}
\noindent \begin{tabular}{ll|cccc|c} \toprule \toprule
     \multicolumn{2}{l|}{\textbf{\# Exceedances}  } & \multicolumn{4}{l|}{Estimated} & \multicolumn{1}{l}{Backtested} \\
    $\alpha$ & Estimator & {$h = 20$} & {$h = 40$} & {$h = 100$} & {Silverman's $h$} & {Interval 2}  \\ \midrule
    \multirow{4}*{5\%}  & Naive & 9  & 12 & 12 &  9 ($h=586.8$) & \multirow{4}*{25}   \\
      & Block  & 20 & 20 & 20 & 20 ($h=40.8$)  \\
      & Row  & 20 & 20 & 20 & 20 ($h=41.1$)   \\
      & Diagonal & 22 & 22 & 21 & 22 ($h=40.5$)   \\ \midrule
    \multirow{4}*{10\%}  & Naive   & 30 &  32 & 32 & 30 ($h=586.8$) &  \multirow{4}*{49}   \\
     & Block  & 35 & 35  & 35 & 35 ($h=40.8$)     \\
     & Row  & 35 & 35 & 35 & 35 ($h=41.1$)  \\
     & Diagonal  & 35 & 35 & 35 & 35 ($h=40.5$)        \\  \bottomrule \bottomrule
\end{tabular}
\end{center}
\end{table}

\medskip

This clearly shows that the averaging estimators have performed significantly better than the naive estimator when compared to both backtesting intervals.
For both $\alpha$-levels, it can be seen that the VaRs generated using the naive estimator are considerably larger than those using the averaging estimators, which themselves produce relatively similar values. 
Furthermore, it can be seen that the 5\% VaRs of the averaging estimators agree fairly well with the results of the backtesting, unlike those of the naive estimator.

\medskip

However, the 10\% VaR estimates are not as accurate and all estimators yield considerably higher VaRs than those obtained by backtesting. 
This could indicate that the log returns are not elliptically distributed, or that the interval at which we estimate the density generator is too short.
Recall that the interval on which we estimate the density generator is merely from 01 June 2016 until 30 June 2017.
This lack of performance is hard to relate directly to the block sizes. Indeed, the ``naive'' estimator of Kendall's tau
(i.e., without any averaging)
corresponds to the case where all the blocks have size 1, so all block sizes are as small as possible. Still the VaR estimates from this estimator are the worst.

\medskip

To get a better understanding of how well the VaR estimates correspond with the backtesting results, we examine how often the estimates are exceeded by the portfolio's losses in each of the backtesting periods. Table \ref{tab:exc1} and \ref{tab:exc2} show the number of exceedances in interval 1 and interval 2 respectively. 

\medskip

Both tables show that the difference between the theoretical and the observed number of exceedances is much larger when using the naive sample Kendall's tau matrix estimator than when using any of the averaging estimators and this applies to both $\alpha$-levels as well as to all bandwidths. 
As such, the averaging estimators are overall significantly better performers than the naive estimator. 
In addition, although there are subtle differences in the performance of the block, row and diagonal estimators, there is no clear winner in this example. 
This shows that computing all Kendall's tau using the block estimators incur no clear additional benefits compared to using only the row or diagonal estimators, that are computationally much cheaper.

\section{Conclusion}
\label{sec:conclusions}

We have provided an alternative approach to the generally challenging task of estimating Kendall's tau and conditional Kendall's tau matrices in high-dimensional settings. By imposing structural assumptions on the underlying (conditional) Kendall's tau matrix, we have introduced new estimators that have significantly reduced computational costs without much loss in performance.

\medskip

For the unconditional case, a model was studied in which the set of variables could be grouped in such a way that the Kendall's taus of variables from different groups depend only on the group numbers.
After reordering the variables by group, the underlying Kendall's tau matrix is then block-structured with constant values in the off-diagonal blocks.
We have proposed several (unbiased) estimators that take advantage of this block-structure by averaging over the usual pairwise Kendall's tau estimates in each of the off-diagonal blocks: the block estimator averages over all pairwise estimates, whereas the row, the diagonal and the random estimators only average over part of the off-diagonal blocks (respectively, over the pairs on the first row, on the first diagonal and over a random selection of pairs). 

\medskip

We have formally derived variance expressions, which showed not only that all estimators are improvements over the usual sample Kendall's tau matrix estimator, but also, interestingly, that the asymptotic variances do not depend on the block dimensions.  
Furthermore, we have seen that the block, the diagonal and the random estimators have similar asymptotic variances, whereas that of the row estimator was different.
In most examples, the diagonal estimator performed the best,
but a formal characterization of the set of such copulas is left for future work.
Under light assumptions, we have shown that asymptotic variances are equal, and that it is approached fastest by the block estimator, followed by the diagonal estimator and then the random estimator.
Hence, if the computational costs were to be reduced, the diagonal estimator is preferable to both the random and the row estimator.

\medskip

Furthermore, a model was studied in which the Kendall's taus depend on a conditioning variable.
Here it was assumed that the conditional Kendall's tau matrix has the above-mentioned block structure and, moreover, that it is preserved under fluctuations of the conditioning variable. 
We have adopted nonparametric, kernel-based estimates of the conditional Kendall's tau in order to construct the conditional versions of the block, row, diagonal and random estimators.  
Under some additional regularity assumptions, we have shown that the estimators are all asymptotically normal conditionally to different values of the covariate.
Following from these expressions, we have seen that the asymptotic variances have analogous expressions to their unconditional counterparts.
As such, all estimators are again improvements over the naive estimator, with the block estimator having the best performance. 
Similarly, if computational costs were to be reduced, the diagonal estimator is preferable to both the random and the row estimator.
Moreover, the reduction of computing costs becomes particularly relevant in the conditional setting, as the use of kernel smoothing introduces additional complexity.

\medskip

We have performed a simulation study in order to support the theoretical findings.
In the unconditional setting, simulations were performed with different meta-elliptical copulas. 
It was furthermore confirmed that the diagonal and the block estimator indeed have the lowest asymptotic variance in most cases, with the block estimator converging the fastest, though closely followed by the diagonal estimator. 
This emphasizes the practical use of the diagonal estimator.

\medskip

We remarked again that the conditional estimators' variances decrease in a similar fashion for growing block dimensions.
As a consequence, the averaging estimators allow for a reduced optimal bandwidth; this was indeed confirmed in the simulations.
This makes the averaging estimators perfectly suited for practical applications, as reducing the bandwidth goes hand in hand with reducing the estimation bias.  

\medskip

Lastly, we have demonstrated the use of the estimators in a real world application.
The estimators were used to model the daily log returns of a large stock portfolio consisting of 240 Euronext listed stocks.
After clustering the sample Kendall's tau matrix, the proposed block structure was clearly visible.
Building on these groups, robust estimates of the correlation matrix were obtained by assuming that the log returns follow an elliptical distribution.
Using each of these estimates, the portfolio's 5\% and 10\% VaR values were estimated.
The results of the averaging estimators were much more stable under changes in the bandwidth used for the estimation of the density generator.
Moreover, the averaging VaRs were significant improvements over the naive estimates.
This example confirmed that the proposed block structures are well reflected in real data conditions and that the averaging estimators lead to significantly more stable and accurate results. 

%

\bigskip

\textbf{Acknowledgments.} The authors thanks Thomas Nagler for useful comments on a previous draft, and Dorota Kurowicka for a discussion and references that lead to Section 2.2. The authors also thank the Associate Editor and two anonymous reviewers for their useful comments which significantly improved the manuscript.


\bigskip

\bibliographystyle{abbrv}
\bibliography{main}{}

\begin{thebibliography}{10}

\bibitem{abdous2005dependence}
B.~Abdous, C.~Genest, and B.~R{\'e}millard.
\newblock Dependence properties of meta-elliptical distributions.
\newblock In {\em Statistical modeling and analysis for complex data problems}, pages 1--15. Springer, 2005.

\bibitem{DependentCorr3}
A.~Ang and G.~Bekaert.
\newblock International asset allocation with regime shifts.
\newblock {\em Review of Financial Studies}, 15:1137--1187, 2002.

\bibitem{Ascorbebeitia2022testing}
J.~Ascorbebeitia, E.~Ferreira, and S.~Orbe.
\newblock Testing conditional multivariate rank correlations: the effect of institutional quality on factors influencing competitiveness.
\newblock {\em TEST}, 2022.

\bibitem{barber2018rocket}
R.~F. Barber and M.~Kolar.
\newblock Rocket: Robust confidence intervals via {K}endall’s tau for transelliptical graphical models.
\newblock {\em The Annals of Statistics}, 46(6B):3422--3450, 2018.

\bibitem{thresholding}
P.~Bickel and E.~Levina.
\newblock {Covariance regularization by thresholding}.
\newblock {\em The Annals of Statistics}, 36(6):2577--2604, 2008.

\bibitem{cadima2010eigenstructure}
J.~Cadima, F.~L. Calheiros, and I.~P. Preto.
\newblock The eigenstructure of block-structured correlation matrices and its implications for principal component analysis.
\newblock {\em Journal of Applied Statistics}, 37(4):577--589, 2010.

\bibitem{DHPC2022}
{D}elft {H}igh {P}erformance {C}omputing~{C}entre ({DHPC}).
\newblock {\em {D}elft{B}lue {S}upercomputer ({P}hase 1)}, 2022.
\newblock \url{https://www.tudelft.nl/dhpc/ark:/44463/DelftBluePhase1}.

\bibitem{CondCopulas}
A.~Derumigny.
\newblock {\em CondCopulas: Estimation and Inference for Conditional Copulas Models}, 2023.
\newblock R package version 0.1.3. Available at \url{https://cran.r-project.org/package=CondCopulas}.

\bibitem{derumigny2018classification}
A.~Derumigny and J.-D. Fermanian.
\newblock {A classification point-of-view about conditional {K}endall’s tau}.
\newblock {\em Computational Statistics \& Data Analysis}, 135:70--94, 2019.

\bibitem{derumigny2019kernel}
A.~Derumigny and J.-D. Fermanian.
\newblock On kernel-based estimation of conditional {K}endall's tau: finite-distance bounds and asymptotic behavior.
\newblock {\em Dependence Modeling}, 7(1):292--321, 2019.

\bibitem{kendallregr}
A.~Derumigny and J.-D. Fermanian.
\newblock On {K}endall’s regression.
\newblock {\em Journal of Multivariate Analysis}, 178:104610, 2020.

\bibitem{derumigny2022identifiability}
A.~Derumigny and J.-D. Fermanian.
\newblock Identifiability and estimation of meta-elliptical copula generators.
\newblock {\em Journal of Multivariate Analysis}, 190:104962, 2022.

\bibitem{ElliptCopulas}
A.~Derumigny and J.-D. Fermanian.
\newblock {\em ElliptCopulas: Inference of Elliptical Copulas and Elliptical Distributions}, 2023.
\newblock R package version 0.1.3. Available at \url{https://cran.r-project.org/package=ElliptCopulas}.

\bibitem{DependentCorr1}
C.~Erb, C.~Harvey, and T.~Viskanta.
\newblock Forecasting international equity correlations.
\newblock {\em Financial Analysts Journal}, 50:32--45, 1994.

\bibitem{factormodel}
J.~Fan, Y.~Fan, and J.~Lv.
\newblock High dimensional covariance matrix estimation using a factor model.
\newblock {\em Journal of Econometrics}, 147(1):186--197, 2008.

\bibitem{latentfactormodel}
J.~Fan, Y.~Liao, and W.~Wang.
\newblock Projected principal component analysis in factor models.
\newblock {\em SSRN Electronic Journal}, 44, 2014.

\bibitem{fang2002meta}
H.-B. Fang, K.-T. Fang, and S.~Kotz.
\newblock The meta-elliptical distributions with given marginals.
\newblock {\em Journal of multivariate analysis}, 82(1):1--16, 2002.

\bibitem{genest2007metaelliptical}
C.~Genest, A.-C. Favre, J.~B{\'e}liveau, and C.~Jacques.
\newblock Metaelliptical copulas and their use in frequency analysis of multivariate hydrological data.
\newblock {\em Water Resources Research}, 43(9), 2007.

\bibitem{VarKT}
C.~Genest, J.~Nešlehová, and N.~Ghorbal.
\newblock Estimators based on {K}endall's tau in multivariate copula models.
\newblock {\em Australian \& New Zealand Journal of Statistics}, 53, 2011.

\bibitem{CKTVeraverbeke1}
I.~Gijbels, N.~Veraverbeke, and M.~Omelka.
\newblock Conditional copulas, association measures and their applications.
\newblock {\em Computational Statistics \& Data Analysis}, 55:1919--1932, 2011.

\bibitem{gray2018shrinkage}
H.~Gray, G.~G. Leday, C.~A. Vallejos, and S.~Richardson.
\newblock Shrinkage estimation of large covariance matrices using multiple shrinkage targets.
\newblock {\em ArXiv preprint, arXiv:1809.08024}, 2018.

\bibitem{seriation}
M.~Hahsler, C.~Buchta, and K.~Hornik.
\newblock {\em seriation: Infrastructure for Ordering Objects Using {S}eriation}, 2022.
\newblock R package version 1.3.2. Available at \url{https://cran.r-project.org/package=seriation}.

\bibitem{hoeffding}
W.~Hoeffding.
\newblock {A Non-Parametric Test of Independence}.
\newblock {\em The Annals of Mathematical Statistics}, 19(4):546--557, 1948.

\bibitem{kurowicka2006uncertainty}
D.~Kurowicka and R.~M. Cooke.
\newblock {\em Uncertainty analysis with high dimensional dependence modelling}.
\newblock John Wiley \& Sons, 2006.

\bibitem{liebscher}
E.~Liebscher.
\newblock A semiparametric density estimator based on elliptical distributions.
\newblock {\em Journal of Multivariate Analysis}, 92(1):205--225, 2005.

\bibitem{liu2012transelliptical}
H.~Liu, F.~Han, and C.-h. Zhang.
\newblock Transelliptical graphical models.
\newblock {\em Advances in neural information processing systems}, 25, 2012.

\bibitem{DependentCorr2}
F.~Longin and B.~Solnik.
\newblock Extreme value correlation of international equity markets.
\newblock {\em The Journal of Finance}, 56:649--676, 2001.

\bibitem{lu2018post}
J.~Lu, M.~Kolar, and H.~Liu.
\newblock Post-regularization inference for time-varying nonparanormal graphical models.
\newblock {\em Journal of Machine Learning Research}, 2018.

\bibitem{VaRintro}
A.~McNeil, R.~Frey, and P.~Embrechts.
\newblock {\em Quantitative Risk Management: Concepts, Techniques, and Tools}, volume 101.
\newblock Taylor \& Francis, 2005.

\bibitem{mcneil2022attainability}
A.~J. McNeil, J.~G. Ne{\v{s}}lehov{\'a}, and A.~D. Smith.
\newblock On attainability of {K}endall’s tau matrices and concordance signatures.
\newblock {\em Journal of Multivariate Analysis}, 191:105033, 2022.

\bibitem{wdm}
T.~Nagler.
\newblock {\em wdm: Weighted Dependence Measures}, 2023.
\newblock R package version 0.2.4.

\bibitem{nelsen2007introduction}
R.~B. Nelsen.
\newblock {\em An introduction to copulas}.
\newblock Springer Science \& Business Media, 2007.

\bibitem{intrCondCopulas1}
A.~Patton.
\newblock Modeling asymmetric exchange rate dependence.
\newblock {\em International Economic Review}, 47:527--556, 2006.

\bibitem{perreault2020structures}
S.~Perreault.
\newblock {\em Structures de corr{\'e}lation partiellement {\'e}changeables: inf{\'e}rence et apprentissage automatique}.
\newblock PhD thesis, Universit{\'e} Laval, 2020.

\bibitem{perreault}
S.~Perreault, T.~Duchesne, and J.~Nešlehová.
\newblock Detection of block-exchangeable structure in large-scale correlation matrices.
\newblock {\em Journal of Multivariate Analysis}, 2019.

\bibitem{perreault2020hypothesis}
S.~Perreault, J.~G. Ne{\v{s}}lehov{\'a}, and T.~Duchesne.
\newblock Hypothesis tests for structured rank correlation matrices.
\newblock {\em Journal of the American Statistical Association}, pages 1--12, 2022.

\bibitem{pimenova2012semi}
I.~Pimenova.
\newblock Semi-parametric estimation of elliptical distribution in case of high dimensionality.
\newblock Master's thesis, Humboldt-Universit{\"a}t zu Berlin, Wirtschaftswissenschaftliche Fakult{\"a}t, 2012.

\bibitem{Renv}
{R Core Team}.
\newblock {\em R: A Language and Environment for Statistical Computing}.
\newblock R Foundation for Statistical Computing, Vienna, Austria, 2022.

\bibitem{generalisedthresholding}
A.~Rothman, E.~Levina, and J.~Zhu.
\newblock Generalized thresholding of large covariance matrices.
\newblock {\em Journal of the American Statistical Association}, 104(485):177--186, 2009.

\bibitem{ryan2024choice}
V.~Ryan and A.~Derumigny.
\newblock On the choice of the two tuning parameters for nonparametric estimation of an elliptical distribution generator.
\newblock {\em ArXiv preprint, arXiv:2408.17087}, 2024.

\bibitem{VaRell}
J.~Sadefo~Kamdem.
\newblock Value-at-risk and expected shortfall for linear portfolios with elliptically distributed risk factors.
\newblock {\em International Journal of Theoretical and Applied Finance}, 08:537--551, 2005.

\bibitem{serfling2009approximation}
R.~J. Serfling.
\newblock {\em Approximation theorems of mathematical statistics}, volume 162.
\newblock John Wiley \& Sons, 2009.

\bibitem{tsybakov2003introduction}
A.~Tsybakov.
\newblock {\em Introduction {\`a} l'estimation non param{\'e}trique}, volume~41.
\newblock Springer Science \& Business Media, 2003.

\bibitem{CKTVeraverbeke2}
N.~Veraverbeke, M.~Omelka, and I.~Gijbels.
\newblock Estimation of a conditional copula and association measures.
\newblock {\em Scandinavian Journal of Statistics}, 38:766--780, 2011.

\end{thebibliography}

\bigskip

\newpage

\appendix

\section{Additional figures}
\label{sec:additional_fig}




\begin{figure}[!hb]
    \centering
    \includegraphics[width = 0.99\textwidth]{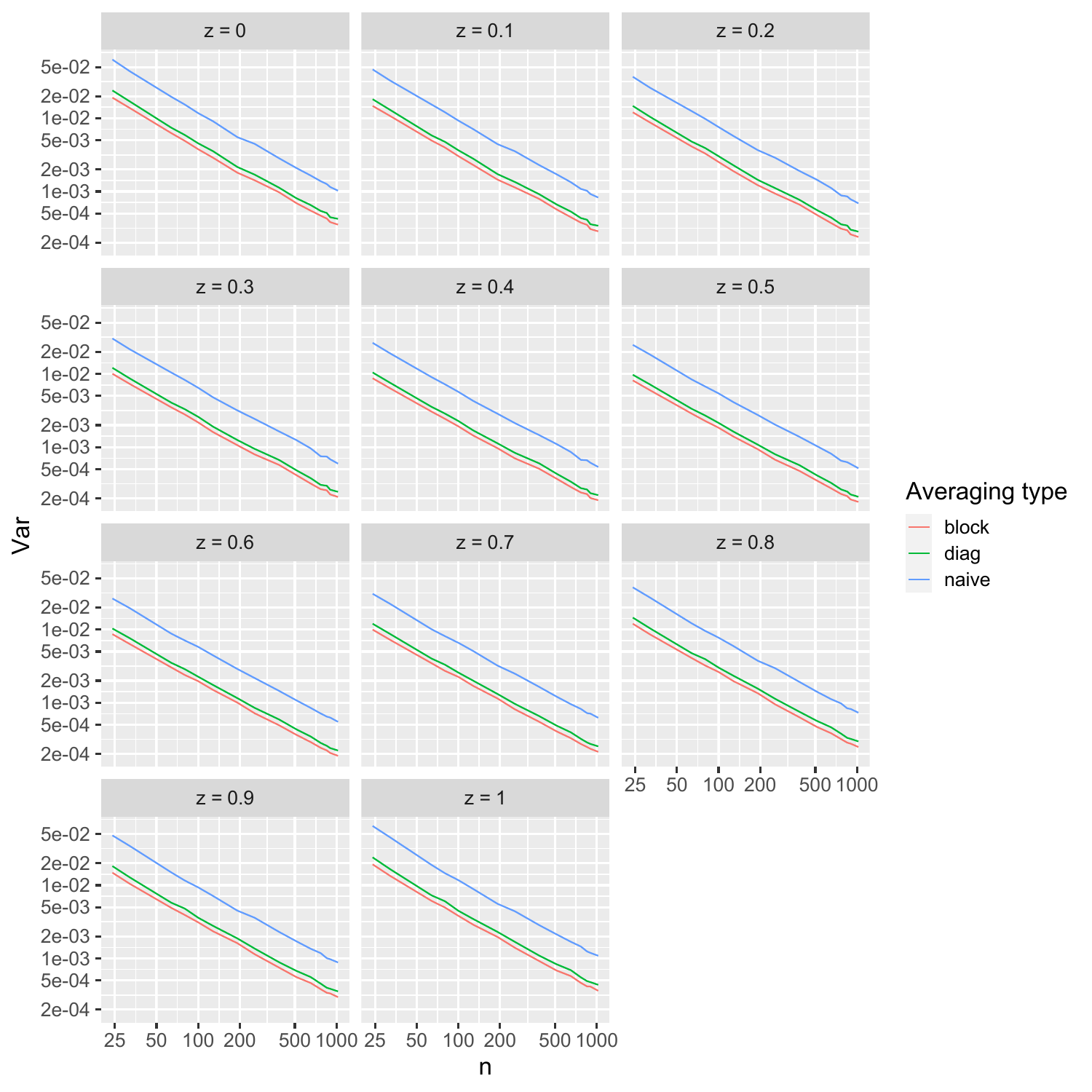}
    \caption{Log-log plots of the conditional estimators' variances as a function of the sample size on several conditioning points, using a block size of $4$ and a bandwidth of 0.5.}
    \label{fig:MSE_vs_dim_conditional_wholegrid}
\end{figure}

\subsection{Effect of the block size in the conditional framework (Section~\ref{sec:simstuCKT}) }
\label{subsubsec:condblocksize}

We first study the estimators' variance under varying block dimensions. In order to run a sufficient number of replications we set the sample size to 20 and consequently the bandwidth to 0.5. The variances and 95\% confidence intervals are estimated using 30000 replications.
For each grid point $z$, the resulting sample variances are displayed on log-log scale in Figure~\ref{fig:MSE_vs_dim_conditional_gauss}.

From the figure we observe that the estimators' variances behave similarly to the unconditional setting under varying block dimensions, for each of the grid points. That is, both estimators are improvements over the naive estimator, both limiting variances are identical, and the block estimator converges slightly faster than the diagonal estimator.
It further follows that since averaging reduces variance, it also reduces the optimal bandwidth.
This is studied in Section \ref{subsubsec:bandwidthselection}.
Again, as there are fewer observations of $Z$ near grid points close to the edges of $[0,1]$, the variance levels vary slightly over the different grid points.

\begin{figure}[hbtp]
    \centering
    \includegraphics[scale = 0.9]{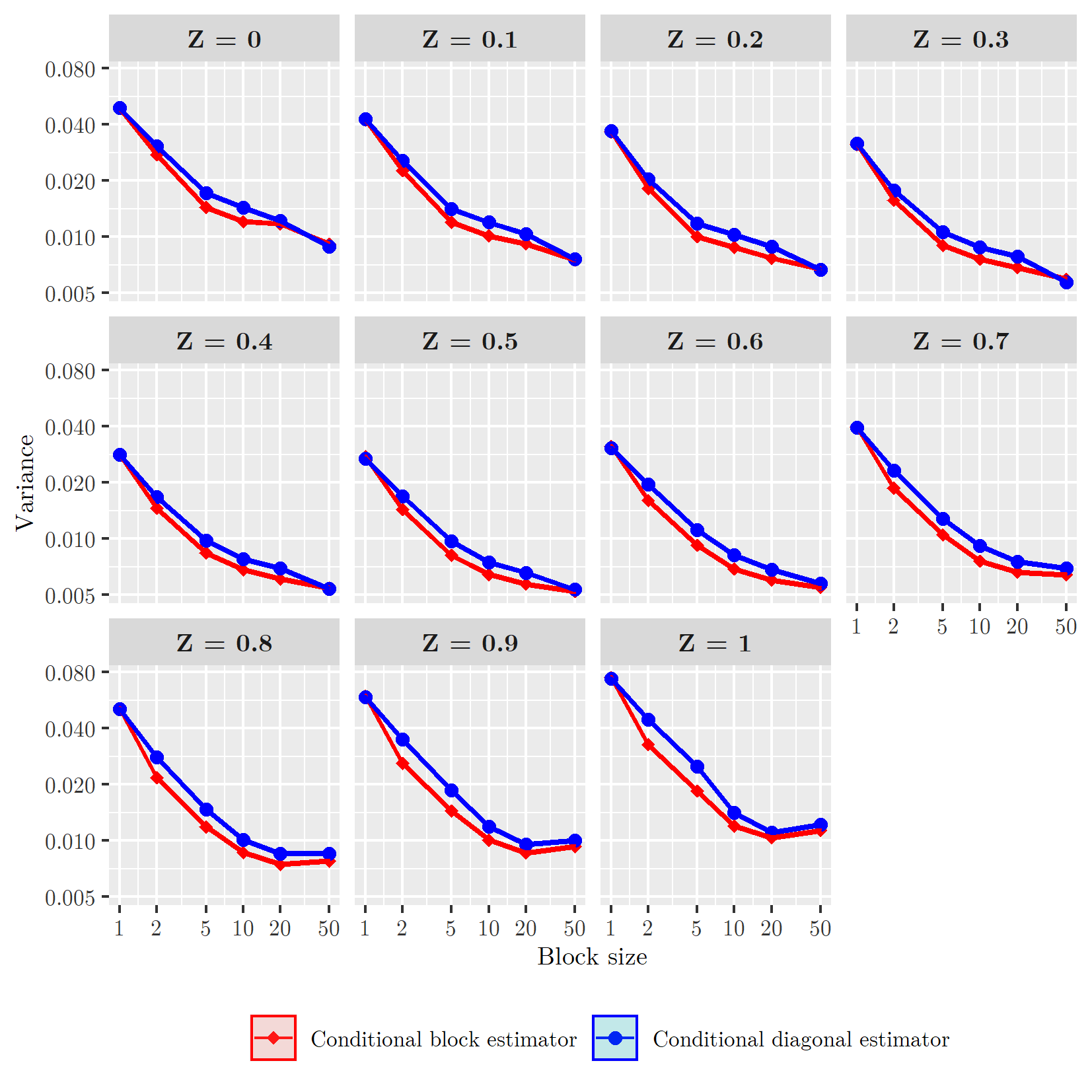}
    \caption{Log-log plots of the conditional estimators' variances as a function of the block size on several conditioning points including 95\% confidence intervals, for a sample size of $20$ and a bandwidth of 0.5.}
    \label{fig:MSE_vs_dim_conditional_gauss}
\end{figure} 

As for the computation times, there is clearly no fundamental change in how these depend on the block size when compared to the unconditional setting.
However, since the conditional estimators are kernel-based, it should be noted that they generally require more computation time than their unconditional counterparts, as was also seen in Figure \ref{fig:Comp_time_vs_timepoints_conditional}. 
For the sake of completeness, we still include a plot of the average computation time against the block size, see Figure \ref{fig:Comp_time_vs_dim_conditional}. 
The results correspond to estimating the off-diagonal block conditional Kendall's taus simultaneously on the $11$ grid points, and follow from $10000$ replications with a sample size of $150$. 
As expected, the block estimator scales quadratically with block size, while the diagonal estimator scales linearly with block size.
Therefore, as in the unconditional case, one may prefer the diagonal estimator over the block estimator to gain substantial computational efficiency and lose only little precision.

\begin{figure}[hbtp]
    \centering
    \includegraphics{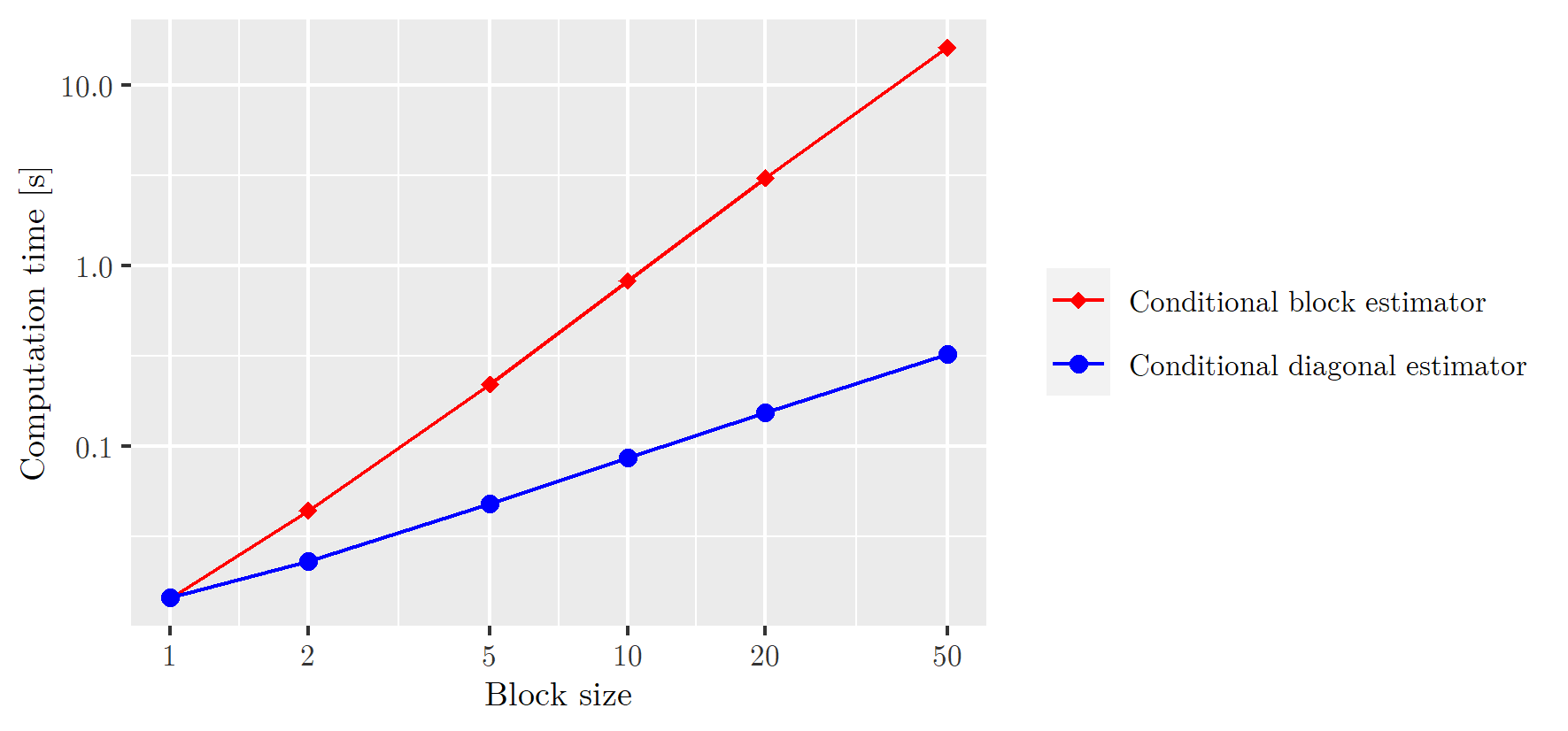}
    \caption{Log-log plot of the conditional estimators' mean computation time [s] as a function of the block size, for a sample size of $150$.}
    \label{fig:Comp_time_vs_dim_conditional}
\end{figure}

\begin{figure}[p]
    \centering
    \includegraphics[width = 0.99\textwidth]{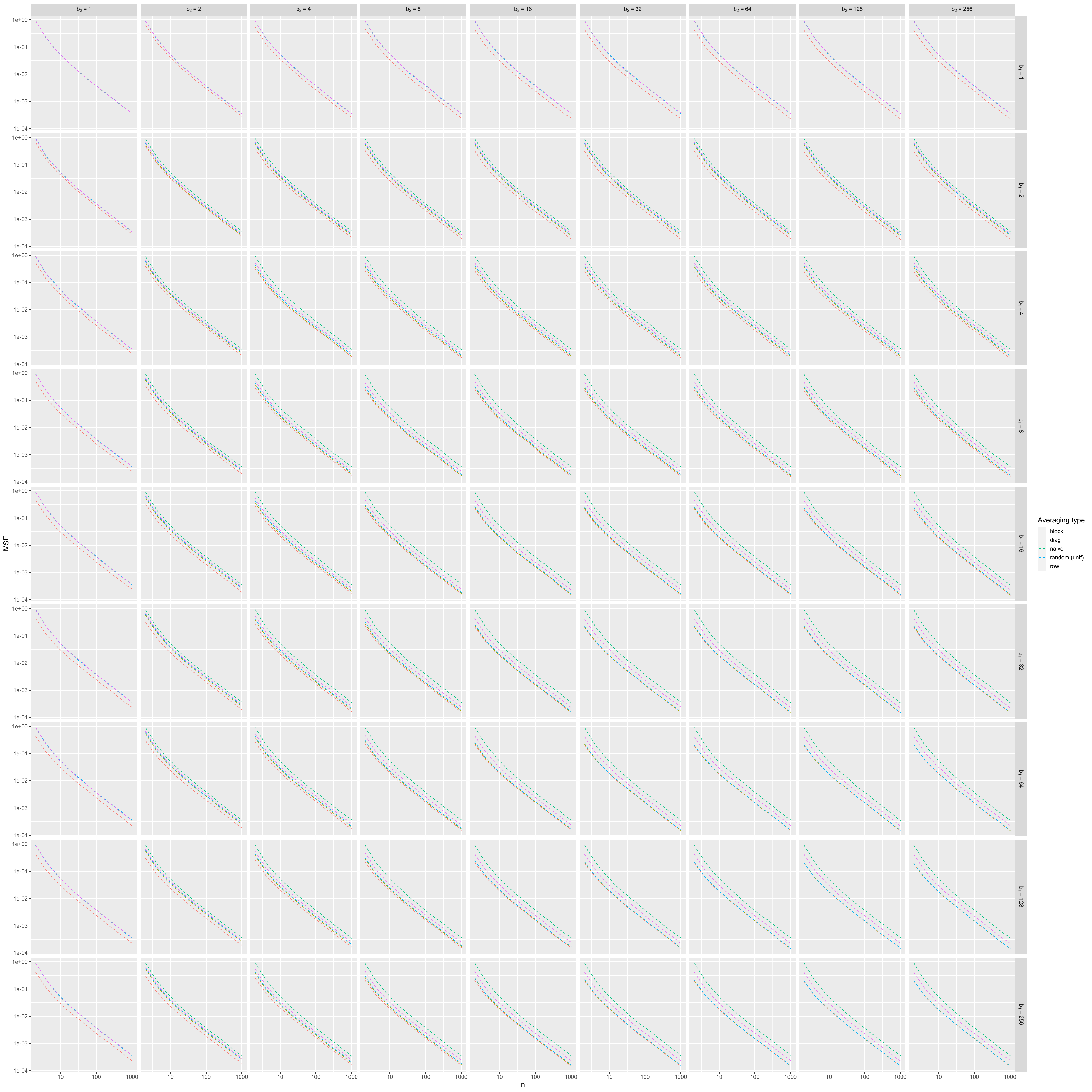}
    \caption{MSE of the unconditional estimators, for different combinations of block sizes}
    \label{MSEnb1b2}
\end{figure}

\begin{figure}[p]
    \centering
    \includegraphics[width = 0.99\textwidth]{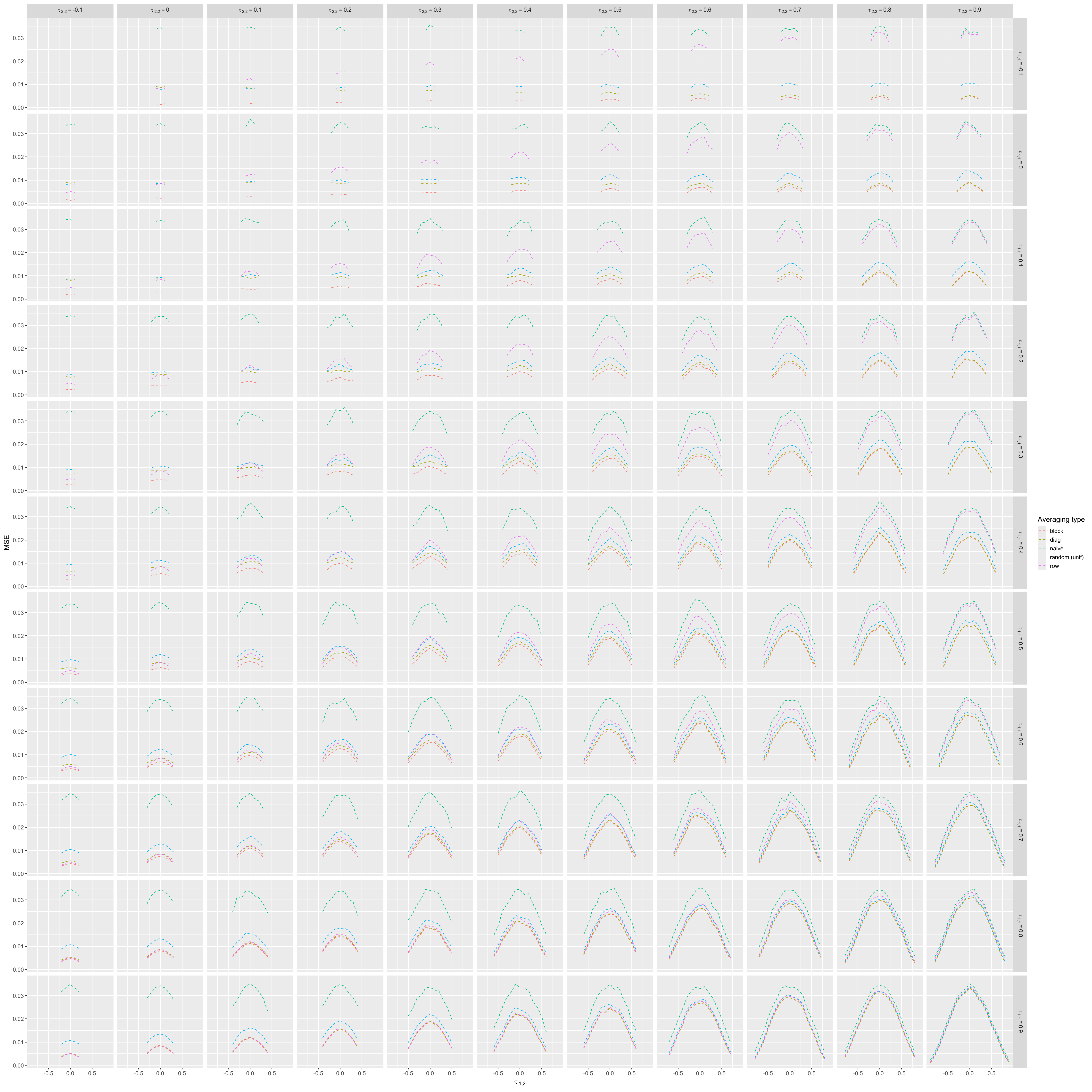}
    \caption{MSE as a function of the intergroup Kendall's tau}
    \label{MSEtau}
\end{figure}


\section{Proofs}

\subsection{Proofs for Section~\ref{sec:consequences_corMatrix}}
\label{proof:prop:correlation_matrix}

\begin{proof}[Proof of Proposition~\ref{prop:correlation_matrix}]
In this proof, we will need the following notation: for an integer $i \in \{1, \dots, p\}$, we define the vector $\1_{i}$ as the vector with a $1$ at the $i$-th component and $0$ elsewhere. For a set $I \subset \{1, \dots, p\}$, we define $\1_I := \sum_{i \in I} \1_i$ which the vector with $1$ at the components in $I$ and $0$ elsewhere.
Note that 
\begin{align*}
    &M (\1_{1}-\1_{2})
    = (1 - \rho_1, \rho_1 - 1, 0, \dots, 0) 
    = (1-\rho_1) (\1_{1}-\1_{2}) \\
    &M (\1_{b_1+1}-\1_{b_1+2})
    = (0, \dots, 0, 1 - \rho_2, \rho_2 - 1, 0, \dots, 0) 
    = (1 - \rho_2) (\1_{b_1+1}-\1_{b_1+2})
\end{align*}
This gives us a number of $(b_1 - 1) + (b_2 - 1)$ eigenvectors with eigenvalues $1 - \rho_1$ and $1 - \rho_2$ that are positive since $\rho_1, \rho_2$ are smaller than $1$.
Moreover, remark that
\begin{align*}
    &M \1_{1:b_1}
    = (1 + (b_1 - 1)) \rho_1 \1_{1:b_1}
    + b_1 \rho_3 \1_{b_1 + (1:b_2)} \\
    &M \1_{b_1 + (1:b_2)}
    = b_2 \rho_3 \1_{1:b_1}
    + (1 + (b_2 - 1) \rho_2) \1_{b_1 + (1:b_2)},
\end{align*}
so the eigenvalues of the matrix
\begin{align*}
    \left( \begin{array}{cc}
        (1 + (b_1 - 1) \rho_1 & b_1 \rho_3 \\
        b_2 \rho_3  & 1 + (b_2 - 1) \rho_2
    \end{array} \right)
\end{align*}
are also eigenvalues of $M$.
These eigenvalues are
\begin{align*}
    1+\frac{\left(b_{1}-1\right) \rho_{1}}{2}+\frac{\left(b_{2}-1\right) \rho_{2}}{2}
    \pm \frac{\sqrt{\left(\left(b_{1}-1\right) \rho_{1}-\left(b_{2}-1\right) \rho_{2}\right)^{2}+4b_{1} \rho_{3}^{2} b_{2}}}{2}
\end{align*}
The smallest eigenvalue is positive if and only if
\begin{align*}
    \left(1+\frac{\left(b_{1}-1\right) \rho_{1}}{2}+\frac{\left(b_{2}-1\right) \rho_{2}}{2} \right)^2 
    > \frac{\left(\left(b_{1}-1\right) \rho_{1}-\left(b_{2}-1\right) \rho_{2}\right)^{2}+4b_{1} \rho_{3}^{2} b_{2}}{4}.
\end{align*}
i.e.
\begin{align*}
    \Big(\big(b_{2} b_{1}-b_{1}-b_{2}+1\big) \rho_{2}
    + b_{1} - 1 \Big) \rho_{1} - b_{1} \rho_{3}^{2} b_{2} + (b_{2}-1) \rho_{2} + 1 > 0
\end{align*}
A sufficient condition is:
\begin{align*}
    &\rho_{1}
    > \frac{b_{1} b_{2} \rho_{3}^{2} - (b_{2} - 1) \rho_{2} - 1}{
    \left(b_{2} b_{1}-b_{1}-b_{2}+1\right) \rho_{2} + b_{1} - 1}, \\
    &\rho_{2}
    > - \frac{b_{1}-1}{b_{2} b_{1} - b_{1} - b_{2} + 1}.
\end{align*}
\end{proof}

\begin{proof}[Proof of Proposition~\ref{proof:lowerbound_rho_K}]
Assume that $M$ is a correlation matrix, and let $\X \sim \mathcal{N}(0, M)$. Take one random variable from each block. Their correlation matrix is $(I+\rho J)$ and must therefore be positive semidefinite. This yields the constraint $\rho \geq -1 / (K-1)$. 

\medskip

Conversely, this bound is reached by choosing the correlation matrices $\Sigma_k = \1$ for all $k = 1, \dots, K$ and considering $M$ as the correlation matrix of $(X_1, \dots, X_1, X_2, \dots, X_2, \dots, X_K, \dots, X_K) \in \Rb^{b_1 + b_2 + \dots + b_K}$, where $(X_1, \dots, X_K)$ follows an exchangeable normal distribution with correlation arbitrarily close to $-1/(K-1)$.
\end{proof}

\subsection{Derivation of Equation (\ref{eq:Q_copulaExpression})}
\label{proof:eq:Q_copulaExpression}

We have
\begingroup
\begin{align*}
    Q_{j_1,j_2} &=  \mathbb{P}\big( 
    \X_{1,(j_1,j_2)} < \X_{2,(j_1,j_2)} ,
    \X_{1,(j_1,j_2)} < \X_{3,(j_1,j_2)}  \big) \\
    &+  \mathbb{P}\big( \X_{1,(j_1,j_2)} < \X_{2,(j_1,j_2)} ,
    \X_{1,(j_1,j_2)} > \X_{3,(j_1,j_2)}  \big)   \\
    &+ \mathbb{P}\big( \X_{1,(j_1,j_2)} > \X_{2,(j_1,j_2)} ,
    \X_{1,(j_1,j_2)} > \X_{3,(j_1,j_2)}  \big) \\
    &+  \mathbb{P}\big( \X_{1,(j_1,j_2)} > \X_{2,(j_1,j_2)} ,
    \X_{1,(j_1,j_2)} < \X_{3,(j_1,j_2)}  \big)
\end{align*}
\endgroup
Let us write these probabilities in terms of the copula $C_{j_1,j_2}$ of $\X_{(j_1,j_2)} = (X_{j_1},X_{j_2})$. This gives
\begin{align*}
    Q_{j_1,j_2} &=  \int_{[0,1]^2} \int_{(u_1,u_2)}^{(1,1)} \int_{(u_1,u_2)}^{(1,1)} \mathrm{d}C_{j_1,j_2}(u_5,u_6)\mathrm{d}C_{j_1,j_2}(u_3,u_4) \mathrm{d}C_{j_1,j_2}(u_1,u_2)\\
    &+ \int_{[0,1]^2} \int_{(0,0)}^{(u_1,u_2)}  \int_{(u_1,u_2)}^{(1,1)} \mathrm{d}C_{j_1,j_2}(u_5,u_6) \mathrm{d}C_{j_1,j_2}(u_3,u_4) \mathrm{d}C_{j_1,j_2}(u_1,u_2)\\
    &+ \int_{[0,1]^2}   \int_{(u_1,u_2)}^{(1,1)} \int_{(0,0)}^{(u_1,u_2)} \mathrm{d}C_{j_1,j_2}(u_5,u_6) \mathrm{d}C_{j_1,j_2}(u_3,u_4) \mathrm{d}C_{j_1,j_2}(u_1,u_2)\\
    &+ \int_{[0,1]^2}   \int_{(0,0)}^{(u_1,u_2)} \int_{(0,0)}^{(u_1,u_2)} \mathrm{d}C_{j_1,j_2}(u_5,u_6) \mathrm{d}C_{j_1,j_2}(u_3,u_4) \mathrm{d}C_{j_1,j_2}(u_1,u_2) \displaybreak[0] \\
    &= \int_{[0,1]^2} \bar{C}_{j_1,j_2}^2(u_1,u_2) \mathrm{d}C_{j_1,j_2}(u_1,u_2)+ \int_{[0,1]^2} \bar{C}_{j_1,j_2}(u_1,u_2) C_{j_1,j_2}(u_1,u_2) \mathrm{d}C_{j_1,j_2}(u_1,u_2)\\
    &+ \int_{[0,1]^2}  C_{j_1,j_2}(u_1,u_2) \bar{C}_{j_1,j_2}(u_1,u_2)  \mathrm{d}C_{j_1,j_2}(u_1,u_2)+ \int_{[0,1]^2}   C_{j_1,j_2}^2(u_1,u_2) \mathrm{d}C_{j_1,j_2}(u_1,u_2) \displaybreak[0] \\
    &= \int_{[0,1]^2} (C_{j_1,j_2}(u_1,u_2) + \bar{C}_{j_1,j_2}(u_1,u_2))^2 \mathrm{d}C_{j_1,j_2}(u_1,u_2),
\end{align*}
as claimed.

\subsection{Proof of Lemma \ref{lem:Ustat}}
\label{proof:lem:Ustat}

\begin{proof}
First check that, trivially, the sample Kendall's tau $\widehat{\tau}_{j_1,j_2}$ is a U-statistic of order 2 with (symmetric) kernel
\begin{align*}
    g^*\left( \X_{i_1, (j_1, j_2)} ,
    \X_{i_2, (j_1, j_2)} \right)
    := \operatorname{sign}
    \left(\left(X_{i_1, j_1}-X_{i_2, j_1}\right)
    \left(X_{i_1, j_2}-X_{i_2, j_2}\right)\right).
\end{align*}
Consequently,
\begin{align*}
    \widehat{\tau}^B &= \frac{1}{b_1 b_2}
    \sum_{j_1 = 1}^{b_1} \sum_{j_2 = b_1 + 1}^p \widehat{\tau}_{j_1,j_2}\\
    &= \frac{1}{b_1 b_2}
    \sum_{j_1 = 1}^{b_1} \sum_{j_2 = b_1 + 1}^p
    \frac{2}{n(n-1)} \sum_{i_1<i_2}
    g^* \left(
    \X_{i_1, (j_1, j_2)} ,
    \X_{i_2, (j_1, j_2)} \right) \\
    &= \frac{2}{n(n-1)} \sum_{i_1<i_2}
    \frac{1}{b_1 b_2}
    \sum_{j_1 = 1}^{b_1} \sum_{j_2 = b_1 + 1}^p
    g^*\left( \X_{i_1, (j_1, j_2)} ,
    \X_{i_2, (j_1, j_2)} \right),
\end{align*}
and it follows that $\widehat{\tau}^B$ is a U-statistic with kernel
\begin{align*}
    g^{B} \left( \X_{i_1}, \X_{i_2} \right)
    = \frac{1}{b_1 b_2}
    \sum_{j_1 = 1}^{b_1} \sum_{j_2 = b_1 + 1}^p
    g^* \left( \X_{i_1, (j_1, j_2)} ,
    \X_{i_2, (j_1, j_2)} \right).
\end{align*}
In a similar manner, it is easily seen that $\widehat{\tau}^{R}$, $\widehat{\tau}^{D}$ and $\widehat{\tau}^{U}$ are all U-statistics as well with respective kernels
\begingroup
\allowdisplaybreaks
\begin{align*}
    g^{R} \left( \X_{i_1}, \X_{i_2} \right)
    &= \frac{1}{N} \sum_{j=1}^N 
    g^*\left( \X_{i_1, (1, j)} ,
    \X_{i_2, (1, j)} \right), \\
    g^{D} \left( \X_{i_1}, \X_{i_2} \right)
    &= \frac{1}{N} \sum_{j=1}^N 
    g^*\left( \X_{i_1, (j, b_1 + j)} ,
    \X_{i_2, (j, b_1 + j)} \right), \\
    g^{U} \left( \X_{i_1}, \X_{i_2} \right)
    &= \frac{1}{b_1 b_2}
    \sum_{j_1 = 1}^{b_1} \sum_{j_2 = b_1 + 1}^p
    W_{j_1,j_2} g^*\left( \X_{i_1, (j_1, j_2)} ,
    \X_{i_2, (j_1, j_2)} \right).
\end{align*}
\endgroup
Note that the kernel $g_{k_1,k_2}^{U}$ is random by depending on the weights $\W$.
\end{proof}

\subsection{Proof of Theorem~\ref{th:var}}
\label{proof:th:var}

Recall from Lemma \ref{lem:Ustat} that $\widehat{\tau}_{j_1,j_2},  \widehat{\tau}^{B}, \widehat{\tau}^{R},  \widehat{\tau}^{D}, \widehat{\tau}^{U}$ can all be written as U-statistics of order 2 with the symmetric kernels defined just above.
To compute the variance of these U-statistics, we will use Hoeffding's formula \cite{hoeffding} (see also \cite[Section 5.2.1]{serfling2009approximation}) that we recall for reader's convenience.
For a second-order U-statistic
$U_n := \binom{n}{2}^{-1} \sum_{1 \leq i_1 \neq i_2 \leq n} g(\X_{i_1},\X_{i_2})$ with symmetric kernel $g: \Rb^p \to \Rb$ satisfying
$\E \lvert g(\X_1, \X_2)\rvert < +\infty $,
the variance is given by
\begin{align}
\label{eq:varU}
    \Var[U_n] &= \binom{n}{2}^{-1} \sum_{c=1}^2 \binom{2}{c}\binom{n-2}{2-c}  \zeta_c \nonumber \\
    &= \frac{2}{n(n-1)} \left( 2(n-2) \zeta_1 + \zeta_2\right),
\end{align}
where $\zeta_1 := \Var\left[ g_1 (\X)\right]$,
$\zeta_2 := \Var\left[ g (\X_1, \X_2)\right]$,
and $g_1 (\x) := \E[ g(\X_1, \x ) ]$.
Further, note that $\E\left[g_1(\X)\right]
= \E\left[g \left(\X_1, \X_2  \right) \right]
= \E\left[ U_n \right]$.

\medskip

We proceed to evaluate $\zeta_1$ and $\zeta_2$ for the different kernels, and then substitute them into \eqref{eq:varU}. Since the kernels $g^*$, $g^B$, $g^R$ and $g^D$ are all deterministic, this leaves us with the variances of the corresponding estimators.
First we prove items (i)-(iv) of Theorem~\ref{th:var} and then proceed with the proof of (v), where we deal with the randomness of $g^{U}$.

\subsubsection{Proof of (i)}

Under Assumption \ref{as:simp_structural}, we have for every $(j_1,j_2) \in \{1, \dots, b_1\} \times \{ b_1 + 1, \dots, p\}$,
\begin{equation*}
    \E[ g^* \left( \X_{1, (j_1, j_2)} ,
    \X_{2, (j_1, j_2)} \right) ]
    = \tau = 2 P_{j_1 , j_2} - 1 .
\end{equation*}
Also,
\begin{align*}
    g^*_{1}(x_{j_1}, x_{j_2})
    &= \E\Big[2\big( \mathds{1} \left\{
    x_{j_1} < X_{1,j_1} , x_{j_2} < X_{1,j_2}  \right\}
    + \mathds{1} \left\{
    X_{1,j_1} < x_{j_1} , X_{1,j_2} < x_{j_2} \right\}  \big) - 1 \Big]
    = 2 P^{c}(x_{j_1}, x_{j_2}) -1,
\end{align*}
where $P^{c}(x_{j_1}, x_{j_2})$ denotes the probability of concordance of two versions of $(X_{j_1}, X_{j_2})$ given that one pair equals $(x_{j_1},x_{j_2})$. Then,
\begin{align*}
    \zeta_1 &= \Var \left[ g^*_{1} (X_{j_1}, X_{j_2}) \right]
    \\[1em]
    &= \E \left[ (2 P^{c}(X_{j_1}, X_{j_2}) - 1 - \tau )^2   
    \right] \\[1em]
    &= 4 \E\left[ P^{c}(X_{j_1}, X_{j_2}) ^2 \right]
    - 4 (1 + \tau) \E\left[ P^{c}(X_{j_1}, X_{j_2}) \right]
    + (1 + \tau)^2.   
\end{align*}
Note that
$\E\left[ P^{c}(X_{j_1}, X_{j_2}) \right] 
= P_{j_1,j_2}$ and
$\E\left[ P^{c}(X_{j_1}, X_{j_2})^2 \right]
= Q_{j_1,j_2}$.
Furthermore, substitution of $\tau = 2 P_{j_1,j_2} - 1 $ gives us
\begin{equation}
    \label{eq:unz1}
    \zeta_1 = 4(Q_{j_1,j_2} - P_{j_1, j_2}^2).
\end{equation}

For $\zeta_2$ we find
\begin{align*}
    \zeta_2 &= \Var \left[ g^* \left( \X_{1, (j_1, j_2)} ,
    \X_{2, (j_1, j_2)} \right)  \right] \\[1em]
    %
    &=\begin{multlined}[t]
    \E \Big[\big( 2\left( \mathds{1}\left\{
    X_{1, j_1} < X_{2, j_1} , X_{1, j_2} < X_{2, j_2}  \right\} + \mathds{1}\left\{
    X_{2, j_1} < X_{1, j_1} , X_{2, j_2} < X_{1, j_2}
    \right\} \right)
    - 1 - \tau \big)^2 \Big]
    \end{multlined} \\[1em]
    &= 4 \E \left[ \left( \mathds{1}\left\{
    X_{1, j_1} < X_{2, j_1} , X_{1, j_2} < X_{2, j_2}  \right\}
    + \mathds{1}\left\{
    X_{2, j_1} < X_{1, j_1} , X_{2, j_2} < X_{1, j_2}
    \right\} \right)^2  \right] \\
    &- 4 (1 + \tau) \E\left[ \mathds{1}\left\{
    X_{1, j_1} < X_{2, j_1} , X_{1, j_2} < X_{2, j_2}  
    \right\}
    + \mathds{1}\left\{
    X_{2, j_1} < X_{1, j_1} , X_{2, j_2} < X_{1, j_2}
    \right\}  \right] \\
    &+ (1 + \tau)^2.
\end{align*}
Furthermore, note that  
\begin{align*}
    \mathds{1}\left\{
    X_{1,j_1} < X_{2,j_1} , X_{1,j_2} < X_{2,j_2}
    \right\} + \mathds{1}\left\{
    X_{2,j_1} < X_{1,j_1} , X_{2,j_2} < X_{1,j_2}
    \right\} \in \{0,1\},
\end{align*}
and that therefore the expression is equal to its square. We obtain
\begin{align}
    \zeta_2
    &= 4 \E \left[ \left( \mathds{1}\left\{
    X_{1,j_1} < X_{2,j_1} , X_{1,j_2} < X_{2,j_2}
    \right\} + \mathds{1}\left\{
    X_{2,j_1} < X_{1,j_1} , X_{2,j_2} < X_{1,j_2}
    \right\} \right)^2  \right] \nonumber\\
    & - 4 (1 + \tau)
    \E\left[ \mathds{1}\left\{
    X_{1,j_1} < X_{2,j_1} , X_{1,j_2} < X_{2,j_2}
    \right\} + \mathds{1}\left\{
    X_{2,j_1} < X_{1,j_1} , X_{2,j_2} < X_{1,j_2}
    \right\} \right] \nonumber \\
    &+ (1 + \tau)^2
    \nonumber  \displaybreak[0] \\[1em]
    &= - 4\tau \E\left[ \mathds{1}\left\{
    X_{1,j_1} < X_{2,j_1} , X_{1,j_2} < X_{2,j_2}
    \right\} + \mathds{1}\left\{
    X_{2,j_1} < X_{1,j_1} , X_{2,j_2} < X_{1,j_2}
    \right\}  \right] \nonumber \\
    &+ (1 + \tau)^2 \nonumber \\[1em]
    &= - 4 (2 P_{j_1, j_2} - 1) P_{j_1,j_2}
    + (1 + 2P_{j_1, j_2}-1)^2 \nonumber \\[1em]
    &= 4 (P_{j_1,j_2} - P_{j_1, j_2}^2), \label{eq:unz2}
\end{align}
where in the second step we have used that 
\begin{align*}
    \E\left[ \mathds{1}\left\{
    X_{1,j_1} < X_{2,j_1} , X_{1,j_2} < X_{2,j_2}
    \right\} + \mathds{1}\left\{
    X_{2,j_1} < X_{1,j_1} , X_{2, j_2} < X_{1,j_2}
    \right\} \right]
    = P_{k_1,k_2}.
\end{align*}
Substitution of \eqref{eq:unz1} and \eqref{eq:unz2} into (\ref{eq:varU}) gives us the final expression
\begin{align*}
    \Var \left[ \widehat{\tau}_{j_1,j_2} \right]
    = \frac{8}{n(n-1)} \big(2(n-2) \left(
    Q_{j_1, j_2} - P_{j_1, j_2}^2 \right)
    + P_{j_1, j_2} - P_{j_1, j_2}^2 \big).
\end{align*}

\subsubsection{Proof of (ii)}

We have
$$\E[ g^{B}\left( \X_{1}, \X_{2}  \right) ] 
= 2 P^{B,1} - 1, $$
and for any $\x = (x_1, \dots, x_p) \in \Rb^p$,
\begin{align*}
    g_1^{B}(\x) &= \E\left[  \frac{1}{b_1 b_2}
    \sum_{j_1 = 1}^{b_1} \sum_{j_2 = b_1 + 1}^p
    g^* \left( \X_{i_1, (j_1, j_2)} ,
    \x_{(j_1, j_2)} \right) \right]
    \\
    &= \frac{1}{b_1 b_2}
    \sum_{j_1 = 1}^{b_1} \sum_{j_2 = b_1 + 1}^p
    2 P^{c}(x_{j_1}, x_{j_2}) - 1.
\end{align*}
For $\zeta_1$, we then obtain
\begingroup
\allowdisplaybreaks
\begin{align*}
    \zeta_1
    &= \Var \left[ g_1^{B}(\X) \right]
    = \E \left[ \left( \frac{1}{b_1 b_2}
    \sum_{j_1 = 1}^{b_1} \sum_{j_2 = b_1 + 1}^p
    2 P^{c}(X_{j_1}, X_{j_2}) - \tau_{j_1, j_2} - 1
    \right)^2   \right] \\
    &= \frac{1}{b_1^2 b_2^2}
    \sum_{j_1 = 1}^{b_1} \sum_{j_2 = b_1 + 1}^p
    \Bigg( \E \left[ \left(2 P^{c}(X_{j_1},X_{j_2}) - 1 - \tau_{j_1, j_2} \right)^2   \right] \\
    &+ \sum_{\substack{(j_3,j_4)
    \in \{1, \dots, b_1\} \times \{b_1+1, \dots, p\} \\
    \{j_1,j_2\} \cap \{j_3,j_4\} = \emptyset } }
    \E \Big[
    \left(2 P^{c}(X_{j_1},X_{j_2}) - 1 - \tau_{j_1, j_2} \right)
    \left(2 P^{c}(X_{j_3},X_{j_4}) - 1 - \tau_{j_3, j_4} \right)  
    \Big] \\
    &+ \sum_{\substack{(j_3,j_4)
    \in \{1, \dots, b_1\} \times \{b_1+1, \dots, p\} \\
    \lvert \{j_1,j_2\}\cap \{j_3,j_4\} \rvert =1 } }
    \E \Big[
    \left(2 P^{c}(X_{j_1},X_{j_2}) - 1 - \tau_{j_1, j_2} \right) 
    \left(2 P^{c}(X_{j_3},X_{j_4}) - 1 - \tau_{j_3, j_4} \right)
    \Big] \Bigg).
\end{align*}
\endgroup
Now check that for any $j_1, j_2, j_3, j_4 \in \{1, \dots, p\}$, 
\begin{align*}
    \E\left[ P^{c}(X_{j_1},X_{j_2}) P^{c}(X_{j_3},X_{j_4})  
    \right] = Q_{j_1,j_2,j_3,j_4}.
\end{align*}
We then have
\begin{align*}
    \zeta_1
    &= \frac{1}{b_1^2 b_2^2}
    \sum_{j_1 = 1}^{b_1} \sum_{j_2 = b_1 + 1}^p
    \Bigg( 4(Q_{j_1, j_2} - P_{j_1,j_2}^2)
    + \sum_{\substack{(j_3,j_4)
    \in \{1, \dots, b_1\} \times \{b_1+1, \dots, p\} \\
    \{j_1,j_2\} \cap \{j_3,j_4\} = \emptyset } }
    4 (Q_{j_1,j_2,j_3,j_4} - P_{j_1,j_2}P_{j_3,j_4}) \\
    &+ \sum_{\substack{(j_3,j_4)
    \in \{1, \dots, b_1\} \times \{b_1+1, \dots, p\} \\
    \lvert \{j_1,j_2\}\cap \{j_3,j_4\} \rvert =1 } }
    4 (Q_{j_1,j_2,j_3,j_4} - P_{j_1,j_2}P_{j_3,j_4}) \Bigg).
\end{align*}
Therefore, we have
\begin{align}
    \zeta_1
    &= \frac{4}{b_1 b_2} \big( Q_{B,2} - S_{B,2}
    + (b_1 + b_2 - 2) (Q_{B,1} - S_{B,1})
    + (b_1 - 1) (b_2 - 1) (Q_{B,0} - S_{B,0}) \big).
    \label{eq:zeta_1_B}
\end{align}

For $\zeta_2$ we have
\begingroup
\allowdisplaybreaks
\begin{align*}
    \zeta_2
    &= \Var \left[ g^{B} \left( \X_{1}, \X_{2} ) \right)  
    \right] \\[1em]
    &= \E \left[ \left( \frac{1}{b_1 b_2}
    \sum_{j_1 = 1}^{b_1} \sum_{j_2 = b_1 + 1}^p
    g^* \left( \X_{1, (j_1, j_2)} ,
    \X_{2, (j_1, j_2)} \right) - \tau_{j_1,j_2}
    \right)^2 \right] \\[2em]
    &= \frac{1}{b_1^2 b_2^2}
    \sum_{j_1 = 1}^{b_1} \sum_{j_2 = b_1 + 1}^p
    \Bigg( \E \left[ \left(
    g^* \left( \X_{1, (j_1, j_2)} , \X_{2, (j_1, j_2)} \right)
    - \tau_{j_1, j_2} \right)^2   \right] \\
    &+ \sum_{\substack{(j_3,j_4)
    \in \{1, \dots, b_1\} \times \{b_1+1, \dots, p\} \\
    \{j_1,j_2\} \cap \{j_3,j_4\} = \emptyset } }
    \E \Big[ \left(
    g^* \left( \X_{1, (j_1, j_2)} , \X_{2, (j_1, j_2)} \right)
    - \tau_{j_1, j_2} \right)
    \left( g^* \left( \X_{1, (j_3, j_4)} ,
    \X_{2, (j_3, j_4)} \right)
    - \tau_{j_3, j_4} \right)  
    \Big] \\
    &+ \sum_{\substack{(j_3,j_4)
    \in \{1, \dots, b_1\} \times \{b_1+1, \dots, p\} \\
    \lvert \{j_1,j_2\}\cap \{j_3,j_4\} \rvert =1 } }
    \E \Big[ \left(
    g^* \left( \X_{1, (j_1, j_2)} , \X_{2, (j_1, j_2)} \right)
    - \tau_{j_1, j_2} \right)
    \left( g^* \left( \X_{1, (j_3, j_4)} ,
    \X_{2, (j_3, j_4)} \right)
    - \tau_{j_3, j_4} \right)
    \Big].
\end{align*}
\endgroup
Remark that
\begin{align*}
    \E \Big[ 
    g^* \left( \X_{1, (j_1, j_2)} , \X_{2, (j_1, j_2)} \right)
    g^* \left( \X_{1, (j_3, j_4)} , \X_{2, (j_3, j_4)} \right)
    \Big] = 4 P_{j_1, j_2, j_3, j_4}.
\end{align*}
Therefore,
\begingroup
\allowdisplaybreaks
\begin{align*}
    \zeta_2
    &= \frac{1}{b_1^2 b_2^2}
    \sum_{j_1 = 1}^{b_1} \sum_{j_2 = b_1 + 1}^p
    \Bigg( 4(P_{j_1,j_2} - P_{j_1,j_2}^2) 
    + \sum_{\substack{(j_3,j_4)
    \in \{1, \dots, b_1\} \times \{b_1+1, \dots, p\} \\
    \{j_1,j_2\} \cap \{j_3,j_4\} = \emptyset } }
    4(P_{j_1,j_2,j_3,j_4} - P_{j_1,j_2}P_{j_3,j_4}) \\
    &+ \sum_{\substack{(j_3,j_4)
    \in \{1, \dots, b_1\} \times \{b_1+1, \dots, p\} \\
    \lvert \{j_1,j_2\}\cap \{j_3,j_4\} \rvert =1 } }
    4(P_{j_1,j_2,j_3,j_4} - P_{j_1,j_2}P_{j_3,j_4})
    \Bigg).
\end{align*}
\endgroup
and we obtain the expression
\begin{align}
    \zeta_2 &= \frac{4}{b_1 b_2}
    \Big( P_{B,2} - S_{B,2}
    + (b_1 + b_2 - 2) (P_{B,1} - S_{B,1})
    + (b_1 - 1)(b_2 - 1) (P_{B,0} - S_{B,0}) \Big)
    \label{eq:zeta_2_B}
\end{align}
Finally, by substituting \eqref{eq:zeta_1_B}
and \eqref{eq:zeta_2_B} into (\ref{eq:varU}) we find
\begin{align*}
    \Var \left[ \widehat{\tau}^{B} \right]
    = \frac{8}{b_1 b_2 n(n-1)} \Big(
    2(n-2)
    &\big( Q_{B,2} - S_{B,2}
    + (b_1 + b_2 - 2) (Q_{B,1} - S_{B,1})
    + (b_1 - 1) (b_2 - 1) (Q_{B,0} - S_{B,0}) \big) \\
    + &\big( P_{B,2} - S_{B,2}
    + (b_1 + b_2 - 2) (P_{B,1} - S_{B,1})
    + (b_1 - 1)(b_2 - 1) (P_{B,0} - S_{B,0}) \big) \Big).
\end{align*}

\subsubsection{Proof of (iii)}

In a similar manner to the proof of (ii) we obtain 
\begin{align*}
    \zeta_1
    &= \frac{1}{N^2}
    \sum_{j_1=1}^N
    \Bigg( 4(Q_{1, b_1 + j_1} - P_{1, b_1 + j_1}^2)
    + \sum_{j_2=1, \, j_2 \neq j_1}^N
    4 (Q_{1, b_1 + j_1, 1, b_1 + j_2}
    - P_{1, b_1 + j_1} P_{1, b_1 + j_2}) \Bigg).
\end{align*}
Note that there is one less summation compared to the expressions in (ii) since 
$\{1, b_1 + j_1\} \cap \{1, b_1 + j_2\} \neq \emptyset$
for every $j_1, j_2$. Therefore
\begin{align*}
    \zeta_1 = \frac{4}{N}
    \big( Q_{R,2} - S_{R,2}
    + (N - 1) (Q_{R,1} - S_{R,1}) \big).
\end{align*}
Similarly, we find that
\begin{align*}
    \zeta_2 = \frac{4}{N}
    \big( P_{R,2} - S_{R,2}
    + (N - 1) (P_{R,1} - S_{R,1}) \big).
\end{align*} 
Hence,
\begin{align*}
    \Var \left[ \widehat{\tau}^{R} \right]
    = \frac{8}{N n(n-1)} \Big(
    2(n-2) \big( Q_{R,2} - S_{R,2}
    + (N - 1) (Q_{R,1} - S_{R,1}) \big)
    + \big( P_{R,2} - S_{R,2}
    + (N - 1) (P_{R,1} - S_{R,1}) \big) \Big).
\end{align*}

\subsubsection{Proof of (iv)}

Again, in a similar manner to the proof of (ii) we obtain 
\begin{align*}
    \zeta_1
    &= \frac{1}{N^2}
    \sum_{j_1=1}^N
    \Bigg( 4(Q_{j_1, b_1 + j_1} - P_{j_1, b_1 + j_1}^2)
    + \sum_{j_2=1, \, j_2 \neq j_1}^N
    4 (Q_{j_1, b_1 + j_1, j_1, b_1 + j_2}
    - P_{j_1, b_1 + j_1} P_{j_1, b_1 + j_2}) \Bigg).
\end{align*}
Note that there is one less summation compared to the expressions in (ii), as in (iii).
Therefore
\begin{align*}
    \zeta_1 = \frac{4}{N}
    \big( Q_{D,2} - S_{D,2}
    + (N - 1) (Q_{D,1} - S_{D,0}) \big).
\end{align*}
Similarly, we find that
\begin{align*}
    \zeta_2 = \frac{4}{N}
    \big( P_{D,2} - S_{D,2}
    + (N - 1) (P_{D,0} - S_{D,0}) \big).
\end{align*}
Hence,
\begin{align*}
    \Var \left[ \widehat{\tau}^{D} \right]
    = \frac{8}{N n(n-1)} \Big(
    2(n-2) \big( Q_{D,2} - S_{D,2}
    + (N - 1) (Q_{D,0} - S_{D,0}) \big)
    + \big( P_{D,2} - S_{D,2}
    + (N - 1) (P_{D,0} - S_{D,0}) \big) \Big).
\end{align*}

\subsubsection{Proof of (v)}

This proof is very similar as the one of (ii), except that we replace all expectations by conditional expectations given $\W = \w$.
Note that
$\widehat{\tau}_{k_1,k_2}^{U} \lvert \W$ is a U-statistic with (deterministic) kernel $g_{k_1,k_2}^{U}\lvert \W$.
As before we obtain the corresponding $\zeta_1$ and $\zeta_2$,
\begingroup
\allowdisplaybreaks
\begin{align*}
    \zeta_1 &= \frac{1}{b_1^2 b_2^2}
    \sum_{j_1 = 1}^{b_1} \sum_{j_2 = b_1 + 1}^p
    \Bigg( 4 W_{j_1, j_2}^2 (Q_{j_1, j_2} - P_{j_1,j_2}^2)
    + \sum_{\substack{(j_3,j_4)
    \in \{1, \dots, b_1\} \times \{b_1+1, \dots, p\} \\
    \{j_1,j_2\} \cap \{j_3,j_4\} = \emptyset } }
    4 W_{j_1, j_2} W_{j_3, j_4}
    (Q_{j_1,j_2,j_3,j_4} - P_{j_1,j_2}P_{j_3,j_4}) \\
    &+ \sum_{\substack{(j_3,j_4)
    \in \{1, \dots, b_1\} \times \{b_1+1, \dots, p\} \\
    \lvert \{j_1,j_2\}\cap \{j_3,j_4\} \rvert =1 } }
    4 W_{j_1, j_2} W_{j_3, j_4}
    (Q_{j_1,j_2,j_3,j_4} - P_{j_1,j_2}P_{j_3,j_4}) \Bigg),
\end{align*}
\endgroup
and
\begin{align*}
    \zeta_2
    &= \frac{1}{b_1^2 b_2^2}
    \sum_{j_1 = 1}^{b_1} \sum_{j_2 = b_1 + 1}^p
    \Bigg( 4 W_{j_1, j_2}^2 (P_{j_1,j_2} - P_{j_1,j_2}^2) 
    + \sum_{\substack{(j_3,j_4)
    \in \{1, \dots, b_1\} \times \{b_1+1, \dots, p\} \\
    \{j_1,j_2\} \cap \{j_3,j_4\} = \emptyset } }
    4 W_{j_1, j_2} W_{j_3, j_4}
    (P_{j_1,j_2,j_3,j_4} - P_{j_1,j_2}P_{j_3,j_4}) \\
    &+ \sum_{\substack{(j_3,j_4)
    \in \{1, \dots, b_1\} \times \{b_1+1, \dots, p\} \\
    \lvert \{j_1,j_2\}\cap \{j_3,j_4\} \rvert =1 } }
    4 W_{j_1, j_2} W_{j_3, j_4}
    (P_{j_1,j_2,j_3,j_4} - P_{j_1,j_2}P_{j_3,j_4})
    \Bigg),
\end{align*}
which lead to the result as claimed.

\subsubsection{Proof of (vi)}

Lastly, for obtaining the variance of $\widehat{\tau}^{U}$ we need to deal with the random kernel $g^{U}$. To this end, we use the law of total variance and find 
\begin{align*}
    \Var\left[ \widehat{\tau}^{U} \right]
    &= \Var \left[ \mathbb{E} \left[ \widehat{\tau}^{U} \lvert \W\right] \right]
    + \mathbb{E} \left[ \Var \left[\widehat{\tau}^{U} \lvert \W \right] \right] \\[1em]
    &= \Var_{J_1, J_2} \left[ \tau_{J_1, J_2} \right]
    + \mathbb{E} \left[ \Var \left[\widehat{\tau}_{k_1,k_2}^{U} \lvert \W \right] \right] \\[1em]
    &= \Var_{J_1, J_2} \left[ \tau_{J_1, J_2} \right]
    + \mathbb{E} \left[ \Var \left[\widehat{\tau}_{k_1,k_2}^{U} \lvert \W \right] \right].
\end{align*}
We can thus obtain the desired variance by first evaluating the variance of $\tau^{U}$ under a given $\W$ and then by taking the expectation with respect to $\W$.
Therefore, 
\begin{align}
\label{eq:pfitemv}
    &\mathbb{E} \left[ \Var \left[\widehat{\tau}_{k_1,k_2}^{U} \lvert \W \right] \right]
    = \frac{2}{n(n-1)} \left( 2(n-2) \mathbb{E}\left[ \zeta_1 \right] + \E\left[ \zeta_2 \right] \right).
\end{align}
Inserting the expression of $\zeta_1$ and $\zeta_2$ found in (v) into (\ref{eq:pfitemv}), we get
\begingroup \allowdisplaybreaks
\begin{align}
    \mathbb{E} \left[ \Var \left[\widehat{\tau}^{U} \lvert \W \right] \right]
    &= \frac{8}{N^2 n (n - 1)} \Bigg( 2(n-2) \Bigg(
    \frac{1}{b_1^2 b_2^2}
    \sum_{j_1 = 1}^{b_1} \sum_{j_2 = b_1 + 1}^p
    \bigg( 4 \mathbb{E}[W_{j_1, j_2}^2] (Q_{j_1, j_2} - P_{j_1,j_2}^2) 
    \nonumber \\
    &+ \sum_{\substack{(j_3,j_4)
    \in \{1, \dots, b_1\} \times \{b_1+1, \dots, p\} \\
    \{j_1,j_2\} \cap \{j_3,j_4\} = \emptyset } }
    4 \mathbb{E}[W_{j_1, j_2} W_{j_3, j_4}]
    (Q_{j_1,j_2,j_3,j_4} - P_{j_1,j_2}P_{j_3,j_4})
    \nonumber \\
    &+ \sum_{\substack{(j_3,j_4)
    \in \{1, \dots, b_1\} \times \{b_1+1, \dots, p\} \\
    \lvert \{j_1,j_2\}\cap \{j_3,j_4\} \rvert =1 } }
    4 \mathbb{E}[W_{j_1, j_2} W_{j_3, j_4}]
    (Q_{j_1,j_2,j_3,j_4} - P_{j_1,j_2}P_{j_3,j_4}) \bigg)
    \Bigg) \nonumber\\
    &+ 
    \frac{1}{b_1^2 b_2^2}
    \sum_{j_1 = 1}^{b_1} \sum_{j_2 = b_1 + 1}^p
    \bigg( 4 \mathbb{E}[W_{j_1, j_2}^2] (P_{j_1,j_2} - P_{j_1,j_2}^2)
    \nonumber \\
    &+ \sum_{\substack{(j_3,j_4)
    \in \{1, \dots, b_1\} \times \{b_1+1, \dots, p\} \\
    \{j_1,j_2\} \cap \{j_3,j_4\} = \emptyset } }
    4 \mathbb{E}[W_{j_1, j_2} W_{j_3, j_4}]
    (P_{j_1,j_2,j_3,j_4} - P_{j_1,j_2}P_{j_3,j_4})
    \nonumber \\
    &+ \sum_{\substack{(j_3,j_4)
    \in \{1, \dots, b_1\} \times \{b_1+1, \dots, p\} \\
    \lvert \{j_1,j_2\}\cap \{j_3,j_4\} \rvert =1 } }
    4 \mathbb{E}[W_{j_1, j_2} W_{j_3, j_4}]
    (P_{j_1,j_2,j_3,j_4} - P_{j_1,j_2}P_{j_3,j_4})
    \bigg)
    \Bigg) \label{eq:uruvar}
\end{align}
\endgroup
Recall that we select $N$ pairs out of the $b_1 b_2$ possible pairs with uniform probability and without replacement.
Therefore, for every distinct pairs $(j_1,j_2)$ and $(j_3, j_4)$, we can compute the following expectations
\begin{align*}
    \mathbb{E}\left[W_{j_1,j_2}^2\right]
    &= \mathbb{E}\left[W_{j_1,j_2}\right]
    = \frac{N}{b_1 b_2}, \\[1em]
    \mathbb{E}\left[ W_{j_1,j_2}W_{j_3,j_4} \right]
    &= \frac{N(N - 1)}{b_1 b_2 (b_1 b_2 - 1)}.
\end{align*}
Lastly, by combining this with Equation~\eqref{eq:uruvar}, we establish the desired formula
\begin{align*}
    &\mathbb{E} \left[
    \Var \left[\widehat{\tau}^{U} \lvert \W \right] \right]
    = \frac{8}{b_1 b_2 n(n-1)} \Big(
    2(n-2) \\
    &\times \big( Q_{B,2} - S_{B,2}
    + \frac{N - 1}{b_1 b_2 - 1}
    (b_1 + b_2 - 2) (Q_{B,1} - S_{B,1})
    + \frac{N - 1}{b_1 b_2 - 1}
    (b_1 - 1) (b_2 - 1) (Q_{B,0} - S_{B,0}) \big) \\
    & + \big( P_{B,2} - S_{B,2}
    + \frac{N - 1}{b_1 b_2 - 1}
    (b_1 + b_2 - 2) (P_{B,1} - S_{B,1})
    + \frac{N - 1}{b_1 b_2 - 1}
    (b_1 - 1)(b_2 - 1) (P_{B,0} - S_{B,0}) \big) \Big).
\end{align*}

\subsection{Proof of Theorem~\ref{th:varconditional}}
\label{proof:th:varconditional}

\begin{proof}
In \cite{derumigny2019kernel} (see p. 299) it was already shown that the conditional Kendall's tau estimator defined in \eqref{eq:def:est_ckt} is asymptotically normal at different points of the conditioning variable.
However, the asymptotic normalities of the averaging estimators remain to be proven. 
To this end, we follow their approach of studying the joint distribution of U-statistics at several conditioning points,
and give a detailed proof for completeness.
The asymptotic covariance matrices are then obtained by combining the results with the appropriate kernels under Assumption \ref{as:structuralconditional}.

Remember that the averaged conditional Kendall's tau are conditional U-statistics with kernels given by Lemma~\ref{lem:Ustat}, which treats the unconditional case.
Furthermore, for any measurable function $g: \Rb^{2d} \to \Rb$, let us define the second order U-statistic
\begin{align}
\label{eq:defUstat}
    U_{n, j'}(g)
    &:= \frac{1}{n(n-1)}  \sum_{1\leq i_1 \neq i_2 \leq n} g_{i_1,i_2},
\end{align}
where
\begin{align*}
    g_{i_1,i_2}
    := \frac{ g\left(\X_{i_{1}}, \X_{i_{2}}\right) \Kc_h\left(\z'_{j'}-\Z_{i_{1}}\right) \Kc_h\left(\z'_{j'}-\Z_{i_{2}}\right)}{ \E\left[\Kc_h\left(\z'_{j'}-\Z\right)\right]^{2}}
\end{align*}

It follows easily that the averaging estimators can be written in terms of $U_{n,j'}$ by 
\begin{equation}
\label{eq:pfUexp}
    \widehat{\tau}_{\mid \Z=\z'_{j'}}
    = \frac{U_{n,j'}(g)}{U_{n,j'}(1)
    + \epsilon_{n,j'}},
\end{equation}
where we write $\widehat{\tau}_{\mid \Z=\z'_{j'}}$ for any of the estimators
$\widehat{\tau}^{B}_{\mid \Z=\z'_{j'}}$, $\widehat{\tau}^{R}_{\mid \Z=\z'_{j'}}$, $\widehat{\tau}^{D}_{\mid \Z=\z'_{j'}}$, $\widehat{\tau}^{U}_{\mid \Z=\z'_{j'}}$
with $g$ given by, respectively, $g^B$, $g^R$, $g^D$, $g^U$. 
The residual term $\epsilon_{n,j'}$ is given by 
$$\epsilon_{n,j'}:=\frac{\sum_{i=1}^{n} \Kc_h^{2}\left(\z'_{j'}-\Z_{i}\right)}{ n(n- 1) \E\left[\Kc_h(\z'_{j'}-\Z)\right]^{2}} .$$ 

\medskip

Further, we set 
\begin{equation}
\label{eq:pfUtilde}
    \widetilde{\tau}_{\mid \Z=\z'_{j'}}
    := \frac{U_{n,j'}(g)}{U_{n,j'}(1) },
\end{equation}
to be the equivalent of \eqref{eq:pfUexp}
with the term $\epsilon_{n,j'}$ removed.
This is a simpler version of \eqref{eq:pfUexp} that will be easier to analyze theoretically.
We now show that $\widehat{\tau}_{\mid \Z=\z'_{j'}}$
and $\widetilde{\tau}_{\mid \Z=\z'_{j'}}$
are close.
Under Assumption \ref{as:kernel},
we replace both
$\widehat{\tau}_{\mid \Z=\z'_{j'}} = U_{n,j'}(g) / U_{n,j'}(1)$ and
$\widetilde{\tau}_{\mid \Z=\z'_{j'}} = U_{n,j'}(g) / (U_{n,j'}(1) + \epsilon_{n,j'})$ by their expressions to obtain 
\begin{align*}
    \E\left[\frac{1}{\epsilon_{n,j'}}(\widetilde{\tau}_{\mid \Z=\z'_{j'}}-\widehat{\tau}_{\mid \Z=\z'_{j'}}) \right]
    &= \E\left[\frac{1}{\epsilon_{n,j'}}\left( \frac{U_{n,j'}(g)}{U_{n,j'}(1) }
    - \frac{U_{n,j'}(g)}{U_{n,j'}(1) + \epsilon_{n,j'}}     \right)  \right] \\
    &= \E\left[\frac{
    U_{n,j'}(g) (U_{n,j'}(1) + \epsilon_{n,j'})
    - U_{n,j'}(g) U_{n,j'}(1)
    }{\epsilon_{n,j'} U_{n,j'}(1)
    (U_{n,j'}(1) + \epsilon_{n,j'})} \right] \\
    &= \E\left[\frac{
    U_{n,j'}(g) U_{n,j'}(1)
    + U_{n,j'}(g) \epsilon_{n,j'}
    - U_{n,j'}(g) U_{n,j'}(1)
    }{\epsilon_{n,j'} U_{n,j'}(1)
    (U_{n,j'}(1) + \epsilon_{n,j'})} \right] \\
    &= \E\left[\frac{
    U_{n,j'}(g) \epsilon_{n,j'}
    }{\epsilon_{n,j'} U_{n,j'}(1)
    (U_{n,j'}(1) + \epsilon_{n,j'})} \right] \\
    &= \E\left[\frac{1}{U_{n,j'}(1)}
    \frac{U_{n,j'}(g)}{U_{n,j'}(1) + \epsilon_{n,j'}}   \right]\\
    &= \E\left[\frac{1}{U_{n,j'}(1)}
    \widehat{\tau}_{ \mid \Z=\z'_{j'}} \right]\\
    &= O(1),
\end{align*}
because $\big|\widehat{\tau}_{ \mid \Z=\z'_{j'}}\big|$ is bounded by $1$ and $U_{n,j'}(1)$ is asymptotically normal, hence convergent by Lemma 17 of \cite{derumigny2019kernel}.
Therefore, $ \widehat{\tau}_{ \mid \Z=\z'_{j'}}-\widetilde{\tau}_{ \mid \Z=\z'_{j'}}  = O_P(\epsilon_{n,j'})$ using Markov's inequality.
By Assumption \ref{as:kernel}(c) and by Bochner's lemma (see e.g.~\cite{tsybakov2003introduction}) we see that $\epsilon_{n,j'} = O_P((nh^d)^{-1})$. It then follows by Assumption~\ref{as:bandwidth} that $$(nh^d)^{1/2}(\widehat{\tau}_{ \mid \Z=\z'_{j'}}-\widetilde{\tau}_{ \mid \Z=\z'_{j'}}) = O_P((nh^d)^{1/2} \epsilon_{n,j'}) = o_P(1).$$ 
It therefore suffices to obtain the limiting law of $ (nh^d)^{1/2}  (\widetilde{\tau}_{ \mid \Z=\z'_{j'}}-\tau_{ \mid \Z=\z'_{j'}})$ as $n\to \infty$. 

Now let us apply Lemma 17 again from \cite{derumigny2019kernel} on the joint asymptotic law of U-statistics of the form $U_{n, j'}$. That is, under Assumptions \ref{as:kernel}-\ref{as:bandwidth} and for any two bounded measurable functions $g_1$ and $g_2$
\begin{align}
\left(n h^{d}\right)^{1 / 2}\bigg(\Big(U_{n, j'}\left(g_{1}\right)&-\E\left[U_{n, j'}\left(g_{1}\right)\right]\Big)_{j'=1, \ldots, n^{\prime}},\Big(U_{n, j'}\left(g_{2}\right)-\E\left[U_{n, j'}\left(g_{2}\right)\right]\Big)_{j'=1, \ldots, n^{\prime}}\bigg) \nonumber \\
& \stackrel{\textnormal{law}}{\longrightarrow}  \mathcal{N}\left(0,\left[\begin{array}{cc}
M_{\infty}\left(g_{1}\right) & M_{\infty}\left(g_{1}, g_{2}\right) \\
M_{\infty}\left(g_{1}, g_{2}\right) & M_{\infty}\left(g_{2}\right)
\end{array}\right]\right), \label{eq:Ulimit}
\end{align}
as $n \rightarrow \infty$, where
\begin{align}
    \left[M_{\infty}\left(g_{1}, g_{2}\right)\right]_{j_1', j_2'}:=\frac{4 \int \Kc^{2} \mathds{1}_{\{\z_{j_1'}^{\prime}=\z_{j_2'}^{\prime}\}}}{f_{\Z}\left(\z_{j_1'}^{\prime}\right)} &\int g_{1}\left(\x_{1}, \x\right) g_{2}\left(\x_{2}, \x\right)  \\
    &\times f_{\X \mid \Z=\z_{j_1'}^{\prime}}(\x) f_{\X \mid \Z=\z_{j_1'}^{\prime}}\left(\x_{1}\right) f_{\X \mid \Z=\z_{j_1'}^{\prime}}\left(\x_{2}\right) \mathrm{d} \x \mathrm{d} \x_{1} \mathrm{d} \x_{2} . \nonumber
\end{align}
Let us investigate the expectation of  $U_{n,j'} (g)$. We write by \eqref{eq:defUstat} 
\begin{equation*}
    \E\left[ U_{n,j'} (g)  \right] = \frac{1 }{ \E\left[\Kc_h(\z'_{j'}-\Z)\right]^{2} } \E\Big[g\left(\X_{1}, \X_{2}\right) \Kc_h\left(\z'_{j'}-\Z_{1}\right) \Kc_h\left(\z'_{j'}-\Z_{2}\right)\Big].
\end{equation*}
Further, by a change of variable we find
\begin{align}
\label{eq:denominatorUstat}
    &\E\Big[g \left(\X_{1}, \X_{2}\right) \Kc_h\left(\z'_{j'}-\Z_{1}\right) \Kc_h\left(\z'_{j'}-\Z_{2}\right) \Big] \nonumber \\
    &= \int g \left(\x_{1}, \x_{2}\right) \Kc_h\left(\z'_{j'}-\Z_{1}\right) \Kc_h\left(\z'_{j'}-\Z_{2}\right)
    f_{\X, \Z} (\mb{x_1}, \Z_1)
    f_{\X, \Z} (\mb{x_2}, \Z_2)
    \mathrm{d}\x_1 \mathrm{d}\x_2 \mathrm{d}\Z_1 \mathrm{d}\Z_2 \nonumber \\
    &= \int g \left(\x_{1}, \x_{2}\right)
    \Kc\left(\mb{u}_1\right)
    \Kc\left(\mb{u}_2\right)
    f_{\X, \Z} (\mb{x_1}, \z'_{j'} + h u_1)
    f_{\X, \Z} (\mb{x_2}, \z'_{j'} + h u_2)
    \mathrm{d}\x_1 \mathrm{d}\x_2 \mathrm{d}\mb{u}_1 \mathrm{d}\mb{u}_2,
\end{align}
with the change of variable
$\Z_1 = \z'_{j'} + h u_1$
and $\Z_2 = \z'_{j'} + h u_2$.
Let us define the function $\phi_{\x_{1}, \x_{2}, \mb{u}_1, \mb{u}_2}(t):=f_{\X, \Z}\left(\x_{1}, \z'_{j'} +t h \mb{u}_1\right) f_{\X, \Z}\left(\x_{2}, \z'_{j'} +t h \mb{u}_2\right)$ for $t \in [0,1]$. By Assumption \ref{as:conditionalvariate}, $\phi_{\x_{1}, \x_{2}, \mb{u}_1, \mb{u}_2}(t)$ is $\alpha$ times differentiable, allowing us to apply the Taylor-Lagrange formula. This gives
\begin{equation}
\label{eq:phi}
    \phi_{\x_{1}, \x_{2}, \mb{u}_1, \mb{u}_2}(t) = \sum_{k=0}^{\alpha-1} \frac{1}{k!}  \phi_{\x_{1}, \x_{2}, \mb{u}_1, \mb{u}_2}^{(k)}(0) + \frac{1}{\alpha!}  \phi_{\x_{1}, \x_{2}, \mb{u}_1, \mb{u}_2}^{(\alpha)}(t^*),
\end{equation}
for some $t^* \in [0,1]$ and where $\phi_{\x_{1}, \x_{2}, \mb{u}_1, \mb{u}_2}^k(t)$ is equal to
\begin{align*}
     \sum_{l=0}^{k} \binom{k}{l} \sum_{j_{1}, \ldots, j_{k}=1}^{d} &h^{k} u_{j_{1},1} \ldots u_{j_{l},1} u_{j_{l+1},2} \ldots u_{j_{k},2}\\
    & \frac{\partial^{k} f_{\X, \Z}}{\partial z_{j_{1}} \ldots \partial z_{j_{l}}}\left(\x_{1}, \z'_{j'}+t h \mb{u}_1\right) \frac{\partial^{k-l} f_{\X, \Z}}{\partial z_{j_{l+1}} \ldots \partial z_{j_{k}}}\left(\x_{2}, \z'_{j'} + t h \mb{u}_2\right).
\end{align*}

After substituting \eqref{eq:phi} into \eqref{eq:denominatorUstat} we obtain
\begin{align*}
    &\int g \left(\x_{1}, \x_{2}\right)
    \Kc\left(\mb{u}_1\right)
    \Kc\left(\mb{u}_2\right)
    \left(\sum_{k=0}^{\alpha-1} \frac{1}{k!}  \phi_{\x_{1}, \x_{2}, \mb{u}_1, \mb{u}_2}^{(k)}(0) + \frac{1}{\alpha!}  \phi_{\x_{1}, \x_{2}, \mb{u}_1, \mb{u}_2}^{(\alpha)}(t^*) \right)
    \mathrm{d}\x_1 \mathrm{d}\x_2 \mathrm{d}\mb{u}_1 \mathrm{d}\mb{u}_2\\
    &= \int g \left(\x_{1}, \x_{2}\right)
    \Kc\left(\mb{u}_1\right)
    \Kc\left(\mb{u}_2\right)
    \left(\phi_{\x_{1}, \x_{2}, \mb{u}_1, \mb{u}_2}(0) + \frac{1}{\alpha!}  \phi_{\x_{1}, \x_{2}, \mb{u}_1, \mb{u}_2}^{(\alpha)}(t^*) \right)
    \mathrm{d}\x_1 \mathrm{d}\x_2 \mathrm{d}\mb{u}_1 \mathrm{d}\mb{u}_2\\
    &=  \int g \left(\x_{1}, \x_{2}\right)
    \Kc\left(\mb{u}_1\right)
    \Kc\left(\mb{u}_2\right)
    f_{\X, \Z} (\mb{x_1}, \z'_{j'})
    f_{\X, \Z} (\mb{x_2}, \z'_{j'})
    \mathrm{d}\x_1 \mathrm{d}\x_2 \mathrm{d}\mb{u}_1 \mathrm{d}\mb{u}_2\\
     &+ \frac{1}{\alpha!} \int g \left(\x_{1}, \x_{2}\right)
    \Kc\left(\mb{u}_1\right)
    \Kc\left(\mb{u}_2\right)
      \phi_{\x_{1}, \x_{2}, \mb{u}_1, \mb{u}_2}^{(\alpha)}(t^*) 
    \mathrm{d}\x_1 \mathrm{d}\x_2 \mathrm{d}\mb{u}_1 \mathrm{d}\mb{u}_2\\
    &= f_{\Z}^2( \z'_{j'} ) \E\Big[g\left(\X_{1}, \X_{2}\right) \mid  \Z_{1}=\Z_{2}=\z'_{j'}\Big] \\
    &+ \frac{1}{\alpha!} \int g \left(\x_{1}, \x_{2}\right)
    \Kc\left(\mb{u}_1\right)
    \Kc\left(\mb{u}_2\right)
      \phi_{\x_{1}, \x_{2}, \mb{u}_1, \mb{u}_2}^{(\alpha)}(t^*) 
    \mathrm{d}\x_1 \mathrm{d}\x_2 \mathrm{d}\mb{u}_1 \mathrm{d}\mb{u}_2,
\end{align*}
where in the first equality we have used the fact that $\int \Kc(\mb{u}) u_{j_{1}} \ldots u_{j_{k}} d \mb{u}=0$ for all $k=1, \ldots, \alpha-1$ as stated in Assumption \ref{as:kernel}(b), which results in the elimination of all the terms in the sum except the first one.
In the second equality, we replaced $\phi$ and $\phi^{(\alpha)}$ by their expressions.
In the last equality, we factor out $f_{\Z}^2( \z'_{j'} )$ and recognize that the corresponding integral is a conditional expectation.

We now bound the second term of the previous display.
By Assumption \ref{as:conditionalvariate} we have
\begin{align*}
    & \frac{1}{\alpha!}  \bigg|  \int g \left(\x_{1}, \x_{2}\right)
    \Kc\left(\mb{u}_1\right)
    \Kc\left(\mb{u}_2\right)
      \phi_{\x_{1}, \x_{2}, \mb{u}_1, \mb{u}_2}^{(\alpha)}(t^*) 
    \mathrm{d}\x_1 \mathrm{d}\x_2 \mathrm{d}\mb{u}_1 \mathrm{d}\mb{u}_2 \bigg| 
    \\& \leq \int 
    \lvert \Kc \rvert \left(\mb{u}_1\right)
    \lvert \Kc\rvert \left(\mb{u}_2\right)
     \lvert \phi_{\x_{1}, \x_{2}, \mb{u}_1, \mb{u}_2}^{(\alpha)}\rvert (t^*) 
    \mathrm{d}\x_1 \mathrm{d}\x_2 \mathrm{d}\mb{u}_1 \mathrm{d}\mb{u}_2   \\
    &\leq C_{\X, \Z} h^\alpha. 
\end{align*}
Therefore, 
$$ \E\Big[g\left(\X_{1}, \X_{2}\right) \Kc_h\left(\z'_{j'}-\Z_{1}\right) \Kc_h\left(\z'_{j'}-\Z_{2}\right)\Big]  =  f_{\Z}^2(\Z'_{j})  \E\Big[g\left(\X_{1}, \X_{2}\right) \mid  \Z_{1}=\Z_{2}=\z'_{j'}\Big]  + O( h^\alpha),  $$
and by the same reasoning we obtain
\begin{align*}
 \E\left[\Kc_h(\z'_{j'}-\Z)\right]  =  f_{\Z}^2(\Z'_{j})  + O( h^\alpha). 
\end{align*}
Consequently, we find that
\begin{align}
    \E\left[ U_{n,j'} (g)  \right] &= \frac{1 }{ \E\left[\Kc_h(\z'_{j'}-\Z)\right]^{2} } \E\Big[g\left(\X_{1}, \X_{2}\right) \Kc_h\left(\z'_{j'}-\Z_{1}\right) \Kc_h\left(\z'_{j'}-\Z_{2}\right)\Big] \nonumber \\
    &= \E\Big[g\left(\X_{1}, \X_{2}\right) \mid  \Z_{1}=\Z_{2}=\z'_{j'}\Big] + r_{n,j'},
\end{align}
where $\lvert r_{n,j'}\rvert \leq C_0h^{\alpha}$ for some constant $C_0$ independent of $j'$. Then, by Assumption \ref{as:bandwidth}
\begin{align}
    (nh^d)^{1/2}\left(\E[U_{n,j'}(g)] -  \E[g(\X_1,\X_2) \mid \Z_1 = \Z_2=\z'_{j'}]\right) = O((nh^d)^{1/2} h^\alpha) = o(1).
    \label{eq:control_bias}
\end{align}
Therefore, the asymptotic law of \eqref{eq:Ulimit} still holds after replacing $\E[U_{n,j'}(g)]$ with $\E[g(\X_1,\X_2) \mid \Z_1 = \Z_2=\z'_{j'}]$. As such,
\begin{align}
    \left(n h^{d}\right)^{1 / 2}\bigg(\Big(U_{n, j'}\left(g\right)&-\tau_{\mid \Z=\z'_{j'}}\Big)_{j'=1, \ldots, n^{\prime}},\Big(U_{n, j'}\left(1\right)-1\Big)_{j'=1, \ldots, n^{\prime}}\bigg) \nonumber \\
    &\stackrel{\textnormal{law}}{\longrightarrow} \mathcal{N}\left(0,\left[\begin{array}{cc}
    M_{\infty}\left(g,g\right) & M_{\infty}\left(g, 1\right) \\
    M_{\infty}\left(g, 1\right) & M_{\infty}\left(1,1\right)
    \end{array}\right]\right), \label{eq:Ulimit2}
\end{align}
as $n \to \infty$, where we have used that $\E[g(\X_1,\X_2) \mid \Z_1 = \Z_2=\z'_{j'}]
= \tau_{\mid \Z=\z'_{j'}}$ under Assumption~\ref{as:structuralconditional}. 

\medskip

In order to derive the asymptotic law of $ (nh^d)^{1/2}  (\widetilde{\tau}_{\mid \Z=\z'_{j'}}-\tau_{ \mid \Z=\z'_{j'}}) $
we apply the Delta-method on \eqref{eq:Ulimit2} with the function $\gamma(\x,\mb{y}) :=\x/\mb{y}$, that divides two real vectors $\x,\mb{y}$ of size $n'$ component-wise. The corresponding Jacobian is given by the $n'\times 2n'$ matrix 
$$
J_{\gamma}(\x, \mb{y})=\left[\operatorname{Diag}\left(y_{1}^{-1}, \ldots,  y_{n^{\prime}}^{-1}\right), \operatorname{Diag}\left(-x_{1} y_{1}^{-2}, \ldots, -x_{n^{\prime}} y_{n^{\prime}}^{-2}\right)\right].
$$
Hence, as $n \to \infty$
$$
\left(n h_{n}^{d}\right)^{1 / 2}\left(\widehat{\tau}_{\mid \Z=\z'_{j'}}-\tau_{\mid \Z=\z'_{j'}}\right)_{j'=1, \ldots, n^{\prime}} \stackrel{\textnormal{law}}{\longrightarrow}  \mathcal{N}\left(0, \mb{H}\right),
$$
setting
$$
\mb{H}:=J_{\gamma}(\vec{\tau}_{|\Z}, \mb{e})\left[\begin{array}{cc}
M_{\infty}\left(g\right) & M_{\infty}\left(g, 1\right) \\
M_{\infty}\left(g, 1\right) & M_{\infty}(1)
\end{array}\right] J_{\gamma}(\vec{\tau}_{|\Z}, \mb{e})^{T},
$$
where $\vec{\tau}_{|\Z}$ and $\mb{e}$ denote $n'$-dimensional vectors filled with respectively $\tau_{\mid \Z=\z'_{j'}}$ and $1$. This gives
\begin{align*}
    \mb{H}&=M_{\infty}\left(g,g\right)-\operatorname{Diag}(\vec{\tau}_{|\Z}) M_{\infty}\left(g, 1\right)-M_{\infty}\left(g, 1\right) \operatorname{Diag}(\vec{\tau}_{|\Z})\\
    &+\operatorname{Diag}(\vec{\tau}_{|\Z}) M_{\infty}(1,1) \operatorname{Diag}(\vec{\tau}_{|\Z})
\end{align*}
and for $1\leq j_1', j_2'\leq n'$, we find
\begin{align*}
     \left[M_{\infty}\left(g, g\right)\right]_{j_1', j_2'}&=\frac{4 \int \Kc^{2} \mathds{1}_{\{\z_{j_1'}^{\prime}=\z_{j_2'}^{\prime}\}}}{f_{\Z}\left(\z_{j_1'}^{\prime}\right)} \mathbb{E}\left[g\left(\X_{1}, \X\right) g\left(\X_{2}, \X\right) \mid \Z=\z_{1}=\Z_{2}=\z_{j_1'}^{\prime}\right],\\
    \left[\operatorname{Diag}(\vec{\tau}_{|\Z}) M_{\infty}\left(g, 1\right)\right]_{j_1', j_2'}&=\frac{4 \int \Kc^{2} \mathds{1}_{\{\z_{j_1'}^{\prime}=\z_{j_2'}^{\prime}\}}}{f_{\Z}\left(\z_{j_1'}^{\prime}\right)}  \tau_{|\Z=\z'_{j'}} \mathbb{E}\left[g\left(\X_{1}, \X\right) \mid \Z=\z_{1}=\z_{j_1'}^{\prime}\right]  \\
    &= \frac{4 \int \Kc^{2} \mathds{1}_{\{\z_{j_1'}^{\prime}=\z_{j_2'}^{\prime}\}}}{f_{\Z}\left(\z_{j_1'}^{\prime}\right)} \tau_{|\Z=\z'_{j'}}^2  \\
    &= \left[M_{\infty}\left(g, 1\right) \operatorname{Diag}(\vec{\tau}_{|\Z})\right]_{j_1',j_2'}\\ &=\left[\operatorname{Diag}(\vec{\tau}_{|\Z}) M_{\infty}\left(1, 1\right) \operatorname{Diag}(\vec{\tau}_{|\Z})\right]_{j_1',j_2'},
\end{align*}
and thus,
\begin{align*}
    [\mb{H}]_{j_1', j_2'}&=\frac{4 \int \Kc^{2} \mathds{1}_{\left\{\z_{j_1'}^{\prime}=\z_{j_2'}^{\prime}\right\}}}{f_{\Z}\left(\z_{j_1'}^{\prime}\right)} \left(\mathbb{E}\left[g\left(\X_{1}, \X_2\right) g\left(\X_{1}, \X_3\right) \mid \Z_{1}=\Z_{2}=\Z_{3}=\z_{j_1'}^{\prime}\right]-\tau_{\mid \Z=\z'_{j'}}^{2}\right).
\end{align*}
Lastly, by substituting the appropriate kernels and by similar steps as in the derivation of the corresponding $\zeta_1$'s from the proof of Theorem \ref{th:var}, it is easily seen that under Assumption \ref{as:structuralconditional} we obtain the desired asymptotic covariance matrices. 

\end{proof}

\section{List of Stocks Used}

\label{sec:stocksLists}

\textbf{Group 1}
\begin{multicols}{2}
  \begin{enumerate}
    \item Cafom (CAFO)  \item Techstep (TECH) \item Gold by Gold (ALGLD) \item Fonciere Inea (INEA) \item NSC Groupe (ALNSC) \item Hofseth BioCare (HBC)
    \item GC Rieber Shipping (RISH) \item Aega (AEGA) \item i2S (ALI2S) \item Moury Construct (MOUR) \item Gascogne (ALBI) \item Thunderbird (TBIRD) 
    \item Hydratec Industries (HYDRA) \item Sparebank 1 Ostfold Akershus (SOAG) \item Altareit (AREIT) \item Unibel (UNBL) \item Cheops Technology France (MLCHE) \item Zenobe Gramme Cert (ZEN) 
    \item Indel Fin (INFE) \item Artois Nom (ARTO) \item IDS (MLIDS) \item Musee Grevin (GREV) \item Robertet (CBE) \item Aurskog Sparebank (AURG)
    \item Alliance Developpement Capital (ALDV) \item Fonciere Atland (FATL) \item FREYR Battery (FREY) \item Hotels de Paris (HDP) \item Phone Web (MLPHW) \item Maroc Telecom (IAM) 
    \item Sporting (SCP) \item MG International (ALMGI) \item Ucar (ALUCR) \item Cumulex (CLEX) \item Televerbier (TVRB) \item Alan Allman Associates (AAA) 
    \item Serma Group (ALSER) \item Planet Media (ALPLA) \item Philly Shipyard (PHLY) \item Augros Cosmetic Packaging (AUGR) \item Sequa Petroleum (MLSEQ) \item EMOVA Group (ALEMV) 
    \item Streamwide (ALSTW) \item Accentis (ACCB) \item Smalto (MLSML) \item Signaux Girod (ALGIR)
  \end{enumerate}
\end{multicols}

\noindent \textbf{Group 2}
\begin{multicols}{2}
  \begin{enumerate}
    \setcounter{enumi}{46}
    \item Interoil (IOX)  \item Ensurge Micropower (ENSU) \item Idex Biometrics (IDEX) \item SD Standard Drilling (SDSD) \item SpareBank 1 Nord-Norge (NONG) \item Bonheur (BONHR)
    \item Eidesvik Offshore (EIOF) \item DOF (DOF) \item Solstad Offshore (SOFF) \item Havila Shipping (HAVI) \item Awilco LNG (ALNG) \item FLEX LNG (FLNG) 
    \item Avance Gas Holding (AGAS) \item Hunter Group (HUNT) \item Itera (ITERA) \item Q-Free (QFR) \item Photocure (PHO) \item PCI Biotech Holding (PCIB)
    \item Hexagon Composites (HEX) \item Nel (NEL) \item McPhy Energy (MCPHY) \item Vow (VOW) \item Axactor (ACR) \item Magseis Fair Fairfield (MSEIS)
    \item NRC Group (NRC) \item Petrolia (PSE) \item Ctac (CTAC) \item StrongPoint (STRO) \item Crescent (OPTI) \item Magnora (MGN)
    \item Rec Silicon (RECSI) \item Questerre Energy Corp (QEC) \item ElectroMagnetic GeoServices (EMGS) \item ABG Sundal Collier Holding (ABG) \item Nekkar (NKR) \item SeaBird Exploration (GEG) 
    \item Wilh. Wilhelmsen Holding (WWI) \item Golden Ocean Group (GOGL) \item Frontline (FRO) \item Euronav (EURN) \item Norwegian Air Shuttle (NAS) \item Atea (ATEA)
    \item Vopak (VPK) \item Orkla (ORK) \item Corbion (CRBN) \item Otello Corporation (OTEC) 
  \end{enumerate}
\end{multicols}

\noindent \textbf{Group 3}
\begin{multicols}{2}
  \begin{enumerate}
    \setcounter{enumi}{92}
    \item Aures Technologies (AURS) \item Keyrus (ALKEY) \item Nextedia (ALNXT) \item Cabasse Group (ALCG) \item Groupe Guillin (ALGIL) \item Guillemot (GUI)
    \item Solutions 30 (S30) \item Esker (ALESK) \item Wavestone (WAVE) \item Groupe Open (OPN) \item Envea (ALTEV) \item Stern Groep (STRN)
    \item IT Link (ALITL)    \item Lectra (LSS) \item Groupe CRIT (CEN)   \item Aubay (AUB)   \item Sword Group (SWP) \item NRJ Group (NRG)
    \item Van de Velde (VAN) \item Hunter Douglas (HDG) \item Oeneo (SBT) \item Axway Software (AXW) \item SES-imagotag (SESL) \item Ateme (ATEME)
    \item Infotel (INF) \item Sergeferrari Group (SEFER) \item Umanis (ALUMS) \item Corticeira Amorim (COR) \item  Pharmagest Interactive (PHA) \item Asetek (ASTK)
    \item  ID Logistics Group (IDL) \item Scana (SCANA) \item Acheter Louer fr (ALOLO) \item Adomos (ALADO) \item Glintt (GLINT) \item Inapa (INA)
    \item  Cegedim (CGM) \item Lavide Holding (LVIDE) \item TIE Kinetix (TIE) \item Alumexx (ALX) \item Ober (ALOBR) \item Cibox Interactive (CIB)
    \item Evolis (ALTVO) \item Proactis (PROAC) \item Visiodent (SDT) \item Fashion B Air (ALFBA) \item Adthink (ALADM) \item Innelec Multimedia (ALINN)
    \item Herige (ALHRG) \item Egide (GID) \item U10 Corp (ALU10) \item Mr. Bricolage (ALMRB) \item Coheris (COH) \item Pcas (PCA)
    \item Rosier (ENGB) \item Itesoft (ITE) \item Gea Grenobl.Elect. (GEA) \item Immobel (IMMO) \item IGE+XAO Group (IGE) \item Koninklijke Brill (BRILL) 
    \item Argan (ARG) \item Fonciere Lyonnaise (FLY) \item Covivio Hotels (COVH) \item Electricite de Strasbourg (ELEC) \item Robertet (RBT) \item Norway Royal Salmon (NRS) \item Olympique Lyonnais Groupe (OLG) \item GeoJunxion (GOJXN) \item Hybrid Software Group (HYSG) \item Cast (CAS) \item Acteos (EOS) \item HF Company (ALHF) 
    \item Vranken-Pommery Monopole (VRAP) \item Generix Group (GENX) \item  Union Technologies Infor. (FPG) \item Diagnostic Medical Systems (DGM) \item Capelli (CAPLI) \item EXEL Industries (EXE) 
    \item Groupe LDLC (ALLDL) \item genOway (ALGEN) \item CBo Territoria (CBOT) \item  Aurea (AURE) \item EO2 (ALEO2) \item RAK Petroleum (RAKP)
  \end{enumerate}
\end{multicols}

\noindent \textbf{Group 4}
\begin{multicols}{2}
  \begin{enumerate}
    \setcounter{enumi}{176}
    \item DNO (DNO) \item Archer Limited (ARCH) \item Odfjell Drilling (ODL) \item BW Offshore (BWO) \item Panoro Energy (PEN) \item PGS (PGS) 
    \item TGS (TGS) \item Subsea 7 (SUBC) \item Equinor (EQNR) \item Aker BP (AKRBP) \item Aker (AKER) \item Aker Solutions (AKSO) 
    \item Akastor (AKAST) \item Etablissements Maurel et Prom (MAU)
    \item Vallourec (VK) \item CGG (CGG) \item TechnipFMC (FTI) \item Fugro (FUR) 
    \item SBM Offshore (SBMO) \item Galp Energia (GALP) \item TotalEnergies (TTE) \item Royal Dutch Shell B (RDSB) \item Schlumberger Limited (LSD) \item Alten (ATE) 
    \item Capgemini (CAP) \item Atos (ATO) \item STMicroelectronics (STM) \item ASML Holding (ASML) \item ASM International (ASM) \item BE Semiconductors (BESI) 
    \item Banco Comercial Portugues (BCP)
    \item Mota-Engil (EGL) \item Altri (ALTR) \item The Navigator Company (NVG) \item Semapa (SEM) \item Trigano (TRI) 
    \item Ipsos (IPS) \item Barco (BAR) \item TomTom (TOM2) \item PostNL (PNL) \item TF1 (TFI) \item Derichebourg (DBG)
    \item Heijmans (HEIJM) \item Koninklijke BAM Groep (BAMNB) \item Aegon (AGN) \item AXA (CS) \item Agaas (AGS) \item Bouygues (EN) 
    \item VINCI (DG) \item Eiffage (FGR) \item Faurecia (EO) \item Valeo (FR) \item Michelin (ML) \item Ackermans \& Van Haaren (ACKB) 
    \item Royal Boskalis Westminster (BOKA) \item Imerys (NK) \item Solvay (SOLB) \item Umicore (UMI) \item AkzoNobel (AKZA) \item Air Liquide (AI) 
    \item Koninklijke Philips (PHIA) \item Aperam (APAM) \item Eramet (ERA) \item Norsk Hydro (NHY) 
  \end{enumerate}
\end{multicols}

\end{document}